\theoremstyle{plain}
\newtheorem{fac}{Fact}
\newtheorem{lem}{Lemma}[section]
\newtheorem{thm}[lem]{Theorem}
\newtheorem{cor}[lem]{Corollary}
\newtheorem{prop}[lem]{Proposition}
\theoremstyle{definition}
\newtheorem{rem}[lem]{Remark}
\newtheorem{ex}[lem]{Example}
\newtheorem{defn}[lem]{Definition}
\let\ssection=\section
\renewcommand{\section}{\setcounter{equation}{0}\ssection}
\newcommand{\R}{\mathbb{R}}
\newcommand{\Z}{\mathbb{Z}}
\newcommand{\C}{\mathbb{C}}
\newcommand{\Q}{\mathbb{Q}}
\newcommand{\RP}{{\mathbb{RP}}}
\newcommand{\cM}{\mathcal{M}}
\newcommand{\id}{\textup{Id}}
\newcommand{\tr}{\textup{tr }}
\newcommand{\Id}{\mathrm{Id}}
\newcommand{\SL}{\mathrm{SL}}
\newcommand{\PSL}{\mathrm{PSL}}
\begin{document}

\title[Farey boat]{Farey boat:
Continued fractions and triangulations,\\
modular group and polygon dissections}

\author{Sophie Morier-Genoud, Valentin Ovsienko}

\address{Sophie Morier-Genoud,
Sorbonne Universit\'e, Universit\'e Paris Diderot, CNRS,
Institut de Math\'ematiques de Jussieu-Paris Rive Gauche,
 F-75005, Paris, France
}

\address{
Valentin Ovsienko,
Centre national de la recherche scientifique,
Laboratoire de Math\'ematiques 
U.F.R. Sciences Exactes et Naturelles 
Moulin de la Housse - BP 1039 
51687 Reims cedex 2,
France}
\email{sophie.morier-genoud@imj-prg.fr, valentin.ovsienko@univ-reims.fr}

\keywords{Continued fractions, Farey graph, polygon dissections, Ptolemy rule, Pfaffians, modular group}


\begin{abstract}
We reformulate several known results about continued fractions in combinatorial terms.
Among them the theorem of Conway and Coxeter
and that of Series, both relating continued fractions and triangulations.
More general polygon dissections appear when
extending these theorems for elements of the modular group $\PSL(2,\Z)$.
These polygon dissections are interpreted as 
walks in the Farey tessellation.
The combinatorial model of continued fractions can be further developed to
obtain a canonical presentation of elements of $\PSL(2,\Z)$.
\end{abstract}

\maketitle

\hfill{ \it \`A la m\'emoire de Christian Duval}

\thispagestyle{empty}

\section*{Introduction}\label{IntSec}

In this paper we formulate combinatorial interpretations of algebraic properties 
of continued fractions and of matrices in the modular group~$\PSL(2,\Z)$. 
The combinatorics is related to polygon dissections and walks in the Farey tessellation.

The starting point of the present paper is a theorem due to Conway and Coxeter~\cite{CoCo}.
This theorem uses triangulations of polygons to classify Coxeter's ``frieze patterns''.
Work of Coxeter on frieze patterns was motivated by
continued fractions; see~\cite{Cox}.
Our first goal is to reformulate Conway and Coxeter's theorem 
(and related notions) directly in terms of continued fractions, and to compare it
to some known results in the area.
In particular, we compare the Conway and Coxeter theorem with the theorem of Series~\cite{Ser}
that provides an embedding of continued fractions into the Farey tessellation.
This comparison offers a combinatorial relation between negative and regular continued fractions.

The second goal of the paper is to develop the combinatorics that arose
from the above comparison.
This leads to surprising results and notions, that appeared recently 
in the literature~\cite{CoOv,Ovs}.
Among them are relationship between continued fractions and Pfaffians of skew-symmetric matrices, and to
some particular polygon dissections.
We give a survey of this recent development.
Furthermore, along the same lines, we obtain several statements that appear to be new.
These are Theorems~\ref{FWalkThm}, \ref{PPCompleteThm}, \ref{PPCompleteThmBis} and~\ref{SLDecThm}.
We understand elements of~$\PSL(2,\Z)$ as generalized (finite) continued fractions,
triangulations are replaced by more general polygon dissections.

Let us outline possible applications and further developments of
the combinatorial approach discussed in this paper.
We believe that the relation between~$\PSL(2,\Z)$ and polygon dissections 
(see, in particular, Theorems~\ref{SecondMainThm} and~\ref{SLDecThm})
can be applied to other groups extending $\PSL(2,\Z)$. 
This relation connects the topic with several other
areas of algebra, geometry and combinatorics (such as cluster algebras, frieze patterns, etc.).
Application and combination of various methods known in these areas look promising.
One application is already explored in the second part of this work~\cite{MGOprep}, where
we suggest a notion of $q$-deformed continued fractions
and of $q$-deformed rational numbers;
this deformation preserves the combinatorial properties
discussed in the present paper.\\

The paper consists of six sections, each of them can be read independently.

In Section~\ref{TriangCFSec} we expose a combinatorial model for continued fractions.
We consider two classically known expansions of a rational number
$$
\frac{r}{s}
\quad=\quad
c_1 - \cfrac{1}{c_2 
          - \cfrac{1}{\ddots - \cfrac{1}{c_k} } } 
         \quad =\quad
a_1 + \cfrac{1}{a_2 
          + \cfrac{1}{\ddots +\cfrac{1}{a_{2m}} } } ,
$$
with $c_i\geq2$ and $a_i\geq1$.
The algebraic relationship between these two expansions due to Hirzebruch~\cite{Hir} is encoded 
in a triangulation of a polygon.
Although this section is introductory, it contains the main tools
used throughout the paper, such as Ptolemy rule and triangulations of polygons in the Farey graph.
The statements in this section are essentially reformulations of results that can be found in terms of frieze patterns in
Coxeter~\cite{Cox} and results in terms of hyperbolic geometry in Series~\cite{Ser}.
We call these statements ``Facts'' 
 and illustrate them on running examples.

In Section~\ref{MatHZBigSec} we focus on the matrices
\begin{equation}
\label{SLMatEqInt}
M(c_1,\ldots,c_k):=
\left(
\begin{array}{cc}
c_1&-1\\[4pt]
1&0
\end{array}
\right)
\left(
\begin{array}{cc}
c_{2}&-1\\[4pt]
1&0
\end{array}
\right)
\cdots
\left(
\begin{array}{cc}
c_k&-1\\[4pt]
1&0
\end{array}
\right)
\end{equation}
and
$$
M^+(a_1,\ldots,a_{2m}):=
\left(
\begin{array}{cc}
a_1&1\\[4pt]
1&0
\end{array}
\right)
\left(
\begin{array}{cc}
a_2&1\\[4pt]
1&0
\end{array}
\right)
\cdots
\left(
\begin{array}{cc}
a_{2m}&1\\[4pt]
1&0
\end{array}
\right)
$$
associated with the continued fractions. We establish elementary algebraic properties of these matrices and in particular their algebraic relationship.
In this section, the remarkable identity $M(c_1,\ldots,c_n)=-\Id$ appears using the combinatorial data introduced in Section~\ref{TriangCFSec}. 

In Section~\ref{CoCoIntSec} we describe combinatorially the complete set of positive integer $n$-tuples 
$(c_1,\ldots,c_n)$ that are solutions of the equation 
\begin{equation}
\label{SLCoCoEq}
M(c_1,\ldots,c_n)=
\pm\Id.
\end{equation}
The theorem of Conway and Coxeter~\cite{CoCo} provides a certain subset of solutions of 
$M(c_1,\ldots,c_n)=-\Id$ in terms of triangulations of $n$-gons. 
These solutions are obtained from the triangulations by counting  the number of triangles incident 
at each vertex of the $n$-gon.
All positive integer solutions of~\eqref{SLCoCoEq} are obtained
from a special class of dissections of $n$-gons called ``$3d$-dissections''~\cite{Ovs}.
Under weaker conditions on the coefficients~$c_i$, the continued
fraction disappears gradually, 
but the corresponding combinatorics lives on\footnote{
...similar to Cheshire cat's grin.}
and becomes more sophisticated.

Let us be a little bit more technical and briefly explain 
the way combinatorics appears in the context of
the modular group.
This relationship is central for the whole paper.
The standard choice of generators of~$\PSL(2,\Z)$ is
$$
R=\left(
\begin{array}{cc}
1&1\\[4pt]
0&1
\end{array}
\right),
\qquad
S=\left(
\begin{array}{cc}
0&-1\\[4pt]
1&0
\end{array}
\right),
$$
and all the relations in~$\PSL(2,\Z)$ are consequences of the following two relations:
$S^2=\Id$ and~$(RS)^3=\Id$,
implying the well-known isomorphism 
$\PSL(2,\Z)\simeq\left(\Z/2\Z\right)*\left(\Z/3\Z\right)$.
It is then not difficult to deduce that every $A\in\PSL(2,\Z)$ can be written
(non-uniquely) in the form 
\begin{equation}
\label{ItroDec}
A=R^{c_1}S\,R^{c_2}S\cdots{}R^{c_n}S,
\end{equation}
in such a way that all the coefficients $c_i$ are 
{\it positive integers}.
Note that~\eqref{ItroDec} coincides with~\eqref{SLMatEqInt},
i.e., $A=M(c_1,\ldots,c_n)$.
It is a rule in combinatorics that positive integers count some objects, and
Theorem~\ref{SecondMainThm} provides this interpretation
in the case where~$A$ is a {\it relation} in~$\PSL(2,\Z)$,
i.e., when~$A=\Id$.

In Section~\ref{FarSec} solutions of~(\ref{SLCoCoEq}) are embedded into the 
Farey graph. 
The embedding makes use of the sequence of rationals defined as
the convergents of the negative continued fraction corresponding to a positive solution.
Conway and Coxeter's solutions are then identified with ``Farey polygons''
(as proved in~\cite{SVS}).
More general solutions correspond to ``walks on Farey polygons''.

In Section~\ref{PPPSec} we connect the topic to the Ptolemy-Pl\"ucker relations
(and thus to cluster algebras; see~\cite{FZ,FZ1}).
The origin of these considerations goes back to
Euler who proved a series of identities for the ``continuants'', i.e., the polynomials describing continued
fractions in terms of the coefficients~$c_i$ (or~$a_i$).
Following~\cite{Ust}, we interpret Euler's identity in terms of the
Pfaffian of a $4\times4$ skew-symmetric matrix.
We also give
the ``Pfaffian formula'' (obtained in~\cite{CoOv}) for the trace of the matrix $M(c_1,\ldots,c_n)$.
Note that this appearance of Pfaffians is not a simple artifact,
it reflects a relationship between the subject and symplectic geometry; see~\cite{CoOv1}.
However, we do not describe this relationship in the present paper.

Section~\ref{DecSec} formulates some consequences of the developed combinatorics
for the modular group $\PSL(2,\Z)$.
Every element $A$ of~$\PSL(2,\Z)$
can be written in the form $A=M(c_1,\ldots,c_k)$ in infinitely many different ways.
We make such a presentation canonical by imposing the conditions~$c_i\geq1$ and~$k$ being the smallest possible, and deduce presentations of~$A$ in the standard generators of~$\PSL(2,\Z)$.
We prove that the canonical presentation $A=M(c_1,\ldots,c_k)$
is given by the expansion into the negative continued fraction of the quotient of greatest coefficients of~$A$.
Matrices $M(c_1,\ldots,c_k)$ with $c_i\geq2$ were used to parametrize conjugacy classes of~$\PSL(2,\Z)$; see~\cite{HZ,Zag,Kat},
the sequence $(c_1,\ldots,c_k)$ being the period of the continued fraction of a fixed point of~$A$, which is a quadratic irrational.
In our approach the quadratic irrational is replaced by a rational number.

\tableofcontents


\section{Continued fractions and triangulations}\label{TriangCFSec}

This section is a collection of basic properties of continued fractions
that we formulate in a combinatorial manner.

Let $r$ and $s$ be two coprime positive integers,
and assume that $r>s$.
The rational number~$\frac{r}{s}$ has unique expansions
\begin{equation}
\label{NegRegCFEq}
\frac{r}{s}
\quad=\quad
c_1 - \cfrac{1}{c_2 
          - \cfrac{1}{\ddots - \cfrac{1}{c_k} } } 
         \quad =\quad
a_1 + \cfrac{1}{a_2 
          + \cfrac{1}{\ddots +\cfrac{1}{a_{2m}} } } ,
\end{equation}
where $c_i\geq2$ and $a_i\geq1$, for all $i$.

The first expansion is usually called
a {\it negative}, or reversal continued fraction
the second is a (more common) {\it regular continued fraction}.
We will use the notation
$\llbracket{}c_1,\ldots,c_k\rrbracket{}$
and
$[a_1,\ldots,a_{2m}]$ 
for the above continued fractions, respectively.
Note that one can always assume the number of terms in the regular
continued fraction to be even, since
$[a_1,\ldots,a_{\ell}+1]=[a_1,\ldots,a_{\ell},1]$.

The explicit formula to obtain the coefficients~$(c_1,\ldots,c_k)$
in terms of the coefficients~$(a_1,\ldots,a_{2m})$, whenever~$\llbracket{}c_1,\ldots,c_k\rrbracket{}=[a_1,\ldots,a_{2m}]$, 
is as follows:
\begin{equation}
\label{HZRegEqt}
(c_1,\ldots,c_k)=
\big(a_1+1,\underbrace{2,\ldots,2}_{a_2-1},\,
a_3+2,\underbrace{2,\ldots,2}_{a_4-1},\ldots,
a_{2m-1}+2,\underbrace{2,\ldots,2}_{a_{2m}-1}\big).
\end{equation}
This expression can be found in~\cite[Eq. (19), p.241]{Hir} and~\cite[Eqs. (22), (23)]{HZ}, see also~\cite[p.93]{Never}.
We will give a combinatorial explanation of this formula.
In Section~\ref{MatHZSec} we will give a detailed proof
of a more general statement.

The goal of this introductory section is to explain that
both, regular and negative, continued fractions can be encoded by 
the same simple combinatorial picture.
We will be considering triangulated $n$-gons with exactly two exterior triangles.
All the statements of this section are
combinatorial reformulations of known results.

\subsection{Triangulations with two exterior triangles}\label{TriDvaSec}

Given a (convex) $n$-gon, we will be considering the classical notion of {\it triangulation}
which is a maximal dissection of the $n$-gon by diagonals
that never cross except for the endpoints.
A triangle in a triangulation is called {\it exterior} if
two of its sides are also sides (and not diagonals) of the $n$-gon.

In this section, we consider only those triangulations
that have exactly two exterior triangles.
In such a triangulation
the diagonal connecting the exterior vertices of the exterior triangles
has the property to cross every diagonal of the triangulation:
$$
\xymatrix @!0 @R=0.8cm @C=1.5cm
{
&\bullet\ar@{-}[ld]\ar@{-}[dd]\ar@{-}[rdd]\ar@{-}[rrdd]\ar@{-}[r]
&\bullet\ar@{-}[r]
\ar@{-}[rdd]
&\bullet\ar@{-}[r]\ar@{-}[dd]
&\bullet\ar@{-}[ldd]\ar@{-}[dd]\ar@{-}[rdd]
\ar@{--}[rr]&
&\bullet\ar@{-}[rd]\ar@{-}[dd]&\\
\bullet\ar@{-}[rd]\ar@{~}[rrrrrrr]&&&&&&&\bullet\ar@{-}[ld]
\\
&\bullet\ar@{-}[r]
&\bullet\ar@{-}[r]
&\bullet\ar@{-}[r]&\bullet\ar@{-}[r]&\bullet\ar@{--}[r]&\bullet&
}
$$
Then, every triangle in the triangulation (except for the exterior ones)
can be situated with respect to this diagonal
in one of the two possible ways:
$$
\xymatrix @!0 @R=0.6cm @C=0.6cm
{
&&&\bullet\ar@{-}[ldd]\ar@{-}[rdd]&&&&\bullet\ar@{-}[rr]&&\bullet\\
\ar@{~}[rrrrr]&&&&&&\ar@{~}[rrrrr]&&&&&\\
&&\bullet\ar@{-}[rr]&&\bullet&&&&\bullet\ar@{-}[luu]\ar@{-}[ruu]&
}
$$
that we refer to as  ``base-down'' or ``base-up''.
We assume the first exterior triangle to be situated base-down,
and the last one base-up.

We enumerate the vertices from $0$ to $n-1$ in a (clockwise) cyclic order:
$$
\xymatrix @!0 @R=0.8cm @C=1.5cm
{
&1\ar@{-}[ldd]\ar@{-}[dd]\ar@{-}[rdd]\ar@{-}[rrdd]\ar@{-}[r]&2\ar@{-}[r]
\ar@{-}[rdd]&3\ar@{-}[r]\ar@{-}[dd]&4\ar@{-}[r]\ar@{-}[ldd]\ar@{-}[dd]\ar@{-}[rdd]
&5\ar@{--}[rr]\ar@{-}[dd]&& k\ar@{-}[r]\ar@{-}[dd]& {k+1}\ar@{-}[ldd]\\
\\
0\ar@{-}[r]
&{n-1}\ar@{-}[r]&n-2\ar@{-}[r]&\ar@{-}[r]&\ar@{-}[r]&\ar@{--}[rr]&&{k+2}&
}
$$
so that the exterior vertices are $0$ and $k+1$.

\subsection{Combinatorial interpretation of continued fractions}\label{CombIntCFSec}

Given an $n$-gon and its triangulation
with two exterior triangles, we fix the following notation.
\begin{enumerate}
\item
The integers $(a_1,a_2,\ldots,a_{2m})$ count the number of equally positioned triangles,
i.e. the triangulation consists of the concatenation of $a_1$ triangles base down, followed by $a_2$ triangles base up and so on:
$$
\xymatrix @!0 @R=0.8cm @C=1.5cm
{
&c_1\ar@{-}[ldd]\ar@{-}[dd]\ar@{-}[rdd]\ar@{-}[rrdd]\ar@{-}[r]\ar@/^0.8pc/@{<->}[rrr]^{a_2}
&c_2\ar@{-}[r]
\ar@{-}[rdd]&c_3\ar@{-}[r]\ar@{-}[dd]
&\bullet\ar@{-}[r]\ar@{-}[ldd]\ar@{-}[dd]\ar@{-}[rdd]\ar@{-}[r]\ar@/^0.8pc/@{<->}[rr]^{a_4}
&\bullet\ar@{--}[rr]\ar@{-}[dd]&& c_k\ar@{-}[r]\ar@{-}[dd]& c_{k+1}\ar@{-}[ldd]\\
\\
c_0\ar@{-}[r]\ar@/_0.8pc/@{<->}[rrr]_{a_1}
&c_{n-1}\ar@{-}[r]&c_{n-2}\ar@{-}[r]
&\bullet\ar@{-}[r]\ar@/_0.8pc/@{<->}[rr]_{a_3}
&\bullet\ar@{-}[r]&\ar@{--}[rr]&&c_{k+2}&
}
$$

\item
The integers $\left(c_1,c_2,\ldots,c_n=c_0\right)$ 
count the number of triangles at each vertex, 
i.e., the integer~$c_i$ is the number of triangles incident to the vertex $i$.
\end{enumerate}

Formula \eqref{HZRegEqt} is equivalent to the fact that these sequences define the same rational number.

\begin{fac}
\label{NRCFThm}
If $(a_1,\ldots,a_{2m})$ and $(c_1,\ldots,c_k)$
are the integers defined by (1) and (2), respectively,
then they are the coefficients of the expansions of the same rational number
as a regular and negative continued fraction, i.e.,
$$
[a_1,\ldots,a_{2m}]=\llbracket{}c_1,\ldots,c_k\rrbracket{}.
$$
\end{fac}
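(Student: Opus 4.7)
The plan is to verify directly from the triangulation that the vertex-count sequence $(c_1,\ldots,c_k)$ agrees with Hirzebruch's explicit formula~\eqref{HZRegEqt} in terms of the orientation-run sequence $(a_1,\ldots,a_{2m})$; once this combinatorial matching is in hand, the equality of the two continued fractions is precisely the content of~\eqref{HZRegEqt}, an algebraic identity already recorded in the introduction with reference to~\cite{Hir}.

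To carry out the combinatorial matching, I read the triangulation run by run. Each run of $a_i$ consecutive same-orientation triangles forms a \emph{fan}: it has a single apex vertex and $a_i+1$ base vertices along the opposite side of the crossing diagonal. Two consecutive fans share two vertices, namely the right end of one fan's base is the apex of the next fan and vice versa. Hence the top vertices $c_1,\ldots,c_{k+1}$ split into apices of odd-indexed fans (of sizes $a_1,a_3,\ldots,a_{2m-1}$) and strictly-interior base vertices of even-indexed fans. I then compute $c_i$ at each such vertex. A strictly-interior base vertex of an even fan of size $a_{2i}$ belongs to exactly two triangles of that fan, giving $c=2$, and each even fan contributes $a_{2i}-1$ such vertices. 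The apex $c_1$ of fan $1$ belongs to its $a_1$ triangles (of which the leftmost is the left exterior triangle) and to the first triangle of fan $2$, producing $c_1=a_1+1$. For $i\geq 2$, the apex of the odd fan of size $a_{2i-1}$ is shared with both neighbouring even fans and is therefore incident to $a_{2i-1}$ triangles of its own fan plus one triangle from each neighbour, giving $c=a_{2i-1}+2$. Reading these values left-to-right along the top and stopping just before the outer vertex $c_{k+1}$ of the last exterior triangle recovers exactly the sequence
$$
\bigl(a_1+1,\underbrace{2,\ldots,2}_{a_2-1},a_3+2,\underbrace{2,\ldots,2}_{a_4-1},\ldots,a_{2m-1}+2,\underbrace{2,\ldots,2}_{a_{2m}-1}\bigr),
$$
which is the right-hand side of~\eqref{HZRegEqt}.

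With the $c_i$ thus identified with Hirzebruch's formula, the equality $\llbracket c_1,\ldots,c_k\rrbracket=[a_1,\ldots,a_{2m}]$ is~\eqref{HZRegEqt} itself. If a self-contained check is wanted, a short induction on $m$ suffices: the base case uses the elementary identity $\llbracket\underbrace{2,\ldots,2}_{n}\rrbracket=(n+1)/n$ to obtain $\llbracket a_1+1,\underbrace{2,\ldots,2}_{a_2-1}\rrbracket=(a_1+1)-\frac{a_2-1}{a_2}=a_1+\frac{1}{a_2}=[a_1,a_2]$, and the inductive step peels off the pair $(a_{2m-1},a_{2m})$ from the tail on each side in parallel.

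The one delicate point is the boundary asymmetry: the apex of fan $1$ gets count $a_1+1$, whereas each interior odd-fan apex gets $a_{2i-1}+2$. The missing ``$+1$'' is the triangle that would have come from a (non-existent) fan to the left of fan $1$; its role is played here by the left exterior triangle, which is already included inside the run $a_1$. A symmetric remark applies at the right end, where the $a_{2m}-1$ twos are the interior base vertices of fan $2m$ and the outer vertex $c_{k+1}$ is dropped from the sequence. Keeping this bookkeeping straight is the main obstacle; the rest of the argument is a direct inspection of the triangulation.
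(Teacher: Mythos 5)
Your proof is correct, and its overall architecture is the one the paper intends: reduce the Fact to (a) the combinatorial identification of the vertex counts $(c_1,\ldots,c_k)$ with the right-hand side of~\eqref{HZRegEqt}, and (b) the algebraic identity~\eqref{HZRegEqt} itself. The added value of your write-up is that you actually carry out step (a), which the paper leaves implicit in the picture of Section~\ref{CombIntCFSec}; your fan-by-fan count (apex of fan $1$ gets $a_1+1$, interior apices get $a_{2i-1}+2$, strictly interior base vertices of even fans get $2$) is exactly right, including the boundary bookkeeping at both ends. Where you diverge from the paper is in step (b): the paper proves~\eqref{HZRegEqt} in Section~\ref{MatHZSec} by matrix algebra, namely Lemma~\ref{TransMat} ($R^a=-M(a+1,1,1)$, $L^a=-M(1,2,\ldots,2,1,1)$) combined with $M^+(a_1,\ldots,a_{2m})=R^{a_1}L^{a_2}\cdots R^{a_{2m-1}}L^{a_{2m}}$ and a comparison of first columns via Proposition~\ref{MatConv}, whereas your self-contained alternative is a direct induction on the continued fractions using $\llbracket 2,\ldots,2\rrbracket=(n+1)/n$. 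The matrix route buys the stronger matrix identity~\eqref{NegRegMat} (needed later for Proposition~\ref{CoCoFac}), while your induction is more elementary; note, though, that your inductive step (``peel off $(a_{2m-1},a_{2m})$ from the tail'') is only sketched and, done literally from the tail, forces non-integer final coefficients and a computation through the trailing block of $2$'s, so peeling from the front is cleaner. One cosmetic slip: the top vertices $1,\ldots,k+1$ do not quite ``split'' into odd-fan apices and strictly interior even-fan base vertices, since the exterior vertex $k+1$ is an endpoint base vertex of fan $2m$ belonging to neither class; as you drop it from the sequence anyway, this does not affect the argument.
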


For a proof, see Section~\ref{MatHZSec}.

It is clear that each of the data $(a_1,\ldots,a_{2m})$ 
and $(c_1,\ldots,c_k)$ defines uniquely (the same) triangulation
of a polygon with two exterior triangles.
The number $n$ of vertices is related to the sequences via
$$
a_1+a_2+\cdots+a_{2m}=n-2,
\qquad
c_1+c_2+\cdots+c_k=n+k-3.
$$
Fact~\ref{NRCFThm} then implies the following.

\begin{cor}
\label{NRCFThmBis}
The set of rationals $\frac{r}{s}>1$
is in a one-to-one correspondence with triangulations of polygons
 with two exterior triangles.
\end{cor}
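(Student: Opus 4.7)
The plan is to deduce the bijection directly from Fact~\ref{NRCFThm} together with the uniqueness of the negative continued fraction expansion $\llbracket c_1,\ldots,c_k\rrbracket$ (with $c_i\geq 2$) of a rational $\frac{r}{s}>1$. I would construct maps in both directions and verify that they are mutually inverse.

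For the forward map, starting with a triangulation $T$ of an $n$-gon with exactly two exterior triangles, I would read off the sequence $(c_1,\ldots,c_k)$ of triangle-counts at the non-exterior top vertices $1,\ldots,k$. Each $c_i\geq 2$: if $c_i$ were $1$, the unique triangle at vertex $i$ would use both polygon-edges incident to $i$, producing a third exterior triangle and contradicting the assumption on $T$. By Fact~\ref{NRCFThm} this sequence defines the rational $\frac{r}{s}:=\llbracket c_1,\ldots,c_k\rrbracket$, which is $>1$ since $c_1\geq 2$ and every $1/c_j\in(0,1)$.

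For the inverse map, given $\frac{r}{s}>1$ with $\gcd(r,s)=1$, I would take its unique negative continued fraction $\llbracket c_1,\ldots,c_k\rrbracket$, translate via~\eqref{HZRegEqt} to the regular data $(a_1,\ldots,a_{2m})$, and assemble the triangulation by concatenating $a_1$ base-down triangles, then $a_2$ base-up triangles, and so on. This produces an $n$-gon with $n=a_1+\cdots+a_{2m}+2$ vertices and, by construction, exactly two exterior triangles (the first base-down and the last base-up).

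That the two maps are mutually inverse follows, in one direction, from Fact~\ref{NRCFThm} and the uniqueness of the negative continued fraction expansion, and in the other, from the fact that $(c_1,\ldots,c_k)$ is a complete combinatorial invariant of a triangulation with two exterior triangles. I expect the main obstacle to be this last verification, which I would handle by noting that the $(a_i)$-data, describing consecutive runs of equally-oriented triangles, manifestly reconstructs $T$ uniquely, and that~\eqref{HZRegEqt} provides a bijection between the $(c_i)$- and $(a_i)$-sequences. Once this is in hand, both compositions are identities, and the correspondence is a bijection.
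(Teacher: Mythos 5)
Your argument is correct and is essentially the paper's own: the paper deduces the corollary from Fact~\ref{NRCFThm} together with the (stated as ``clear'') observation that each of the data $(a_1,\ldots,a_{2m})$ and $(c_1,\ldots,c_k)$ uniquely determines the triangulation, combined with uniqueness of the continued fraction expansions. You merely make explicit the details the paper leaves implicit (that $c_i\geq2$ at the non-exterior vertices, that the value is $>1$, and the mutual-inverse check via the run-length data), so there is nothing substantive to add.
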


\begin{defn}
{\rm
Given a rational number~$\frac{r}{s}>1$,
we denote by~$\mathbb{T}_{r/s}$
the corresponding triangulation with two exterior triangles.
}
\end{defn}

\begin{ex}
\label{ExT75}
One has
$$
\frac{7}{5}=
[1,2,1,1]=
\llbracket{}2,2,3\rrbracket.
$$
The corresponding triangulation $\mathbb{T}_{7/5}$ is
$$
\xymatrix @!0 @R=0.8cm @C=1.5cm
{
&\bullet\ar@{-}[ldd]\ar@{-}[dd]\ar@{-}[r]
&\bullet\ar@{-}[ldd]\ar@{-}[r]
&\bullet\ar@{-}[lldd]\ar@{-}[ldd]\ar@{-}[r]
&\bullet\ar@{-}[lldd]\\
\\
\bullet\ar@{-}[r]
&\bullet\ar@{-}[r]
&\bullet
}
$$

\end{ex}

\subsection{The mirror formula}\label{MirFSec}

Consider the reversal of a regular continued fraction:
$[a_{2m},a_{2m-1},\ldots,a_1]$,
which is important in number theorey; see, e.g.,~\cite{AA}.

For every $\ell\geq0$, define the $\ell^{\rm th}$ {\it convergent} of the regular continued fraction~$[a_1,\ldots,a_{2m}]$ by
$$
\frac{r_\ell}{s_\ell}
\quad:=\quad
a_1 + \cfrac{1}{a_2 
          + \cfrac{1}{\ddots +\cfrac{1}{a_{\ell}} } } .
$$
The convergents of the negative continued fraction are defined in a similar way.

The following statement is known as the ``mirror formula'':
$$
\frac{r_{2m}}{r_{2m-1}}=[a_{2m},a_{2m-1},\ldots,a_1].
$$
In Section~\ref{MatConverSec}, we will prove this statement with the help 
of the matrix form of continued fractions.

The conversion into a negative continued fraction resorts to the
coefficients $c_i$ on the opposite vertices of~$\mathbb{T}_{r/s}$.

\begin{cor}
\label{CorRot}
One has
$$
[a_{2m},a_{2m-1},\ldots,a_1]=
\llbracket{}c_{k+2},c_{k+3},\ldots,c_{n-1}\rrbracket.
$$
\end{cor}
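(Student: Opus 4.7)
The plan is to exhibit the pair of sequences $(a_{2m}, \ldots, a_1)$ and $(c_{k+2}, \ldots, c_{n-1})$ as the regular and negative data attached by Section~\ref{CombIntCFSec} to a single triangulation in standard form, and then invoke Fact~\ref{NRCFThm}. The triangulation I would use is simply $\mathbb{T}_{r/s}$ itself, viewed ``from the other end''.

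Concretely, I would take $\mathbb{T}_{r/s}$, apply a $180^\circ$ rotation of the drawing to obtain a new triangulation $\mathbb{T}'$, and re-enumerate its vertices clockwise starting from the new bottom-left corner, which is the image of the old exterior vertex $k+1$. The $180^\circ$ rotation fixes the main diagonal $(0, k+1)$ setwise but swaps its endpoints, interchanges the top and bottom halves of the polygon, and reverses left and right. Consequently base-down and base-up are interchanged for every non-exterior triangle, and the two exterior triangles exchange their roles: the old right exterior $(k, k+1, k+2)$---originally base-up at the top-right---lands at the new bottom-left and becomes the new first exterior triangle (now base-down); symmetrically, the old left exterior $(0, 1, n-1)$ becomes the new last exterior triangle (now base-up). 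Hence $\mathbb{T}'$ satisfies all the conditions of Section~\ref{TriDvaSec}.

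Reading off the data of $\mathbb{T}'$ is then mechanical. Because the left-to-right triangle order is reversed and each orientation flipped, the new regular-CF sequence becomes $(\tilde a_1, \ldots, \tilde a_{2m}) = (a_{2m}, \ldots, a_1)$. Because the new top row is the old bottom row traversed right-to-left, its interior vertices carry the original labels $k+2, k+3, \ldots, n-1$ in that order, and since the number of incident triangles at a vertex is intrinsic to the combinatorial triangulation, the new negative-CF sequence is $(\tilde c_1, \ldots, \tilde c_{n-k-2}) = (c_{k+2}, \ldots, c_{n-1})$. Fact~\ref{NRCFThm} applied to $\mathbb{T}'$ then gives exactly the desired identity. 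The main obstacle will be a clean verification that the re-enumeration really places $\mathbb{T}'$ into the standard form---that the images of $0$ and $k+1$ occupy precisely the positions $\tilde{k'+1}$ and $\tilde 0$ (with $k' = n-k-2$), and that base-down/base-up orientations propagate correctly through the rotation. This is straightforward but fiddly bookkeeping of the clockwise enumeration; Example~\ref{ExT75} provides a convenient sanity check, where $(c_5, c_6) = (2, 4)$ and indeed $\llbracket 2, 4 \rrbracket = 7/4 = [1, 1, 2, 1] = [a_4, a_3, a_2, a_1]$.
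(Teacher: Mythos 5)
Your proposal is correct and is precisely the paper's argument: the paper's proof consists of the single sentence that the formula ``follows from Fact~\ref{NRCFThm} when rotating the triangulation $\mathbb{T}_{r/s}$'', and you have simply carried out the bookkeeping of that $180^\circ$ rotation (re-enumeration, swap of base-down/base-up, intrinsic vertex counts) explicitly and correctly. No further comparison is needed.
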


\begin{proof}
This formula follows from Fact~\ref{NRCFThm} 
when ``rotating'' the triangulation~$\mathbb{T}_{r/s}$.
\end{proof}

\begin{ex}
\label{ExT75Rev}
The reversal of the continued fraction from Example~\ref{ExT75} is as follows:
$$
\frac{7}{4}=
[1,1,2,1]=
\llbracket{}2,4\rrbracket.
$$

\end{ex}

\subsection{Farey sums and the labeling of vertices}\label{FSLabSec}
The rational~$\frac{r}{s}$ can be recovered from the triangulation~$\mathbb{T}_{r/s}$ by an additive rule.

Let us label the vertices of the $n$-gon by elements of the set~$\Q\cup\left\{\frac{1}{0}\right\}$.
We start from $\frac{0}{1}$ and $\frac{1}{0}$
at vertices $0$ and $1$, respectively.
We then extend this labeling to the whole $n$-gon 
by the following ``Farey summation formula''.
Whenever two vertices of the same triangle have been assigned the rationals
$\frac{r'}{s'}$ and $\frac{r''}{s''}$, then the third vertex receives the label
$$
\frac{r'}{s'}\oplus\frac{r''}{s''}
:=\frac{r'+r''}{s'+s''}.
$$
This process is illustrated by the following example.
\begin{equation}
\label{FFracEq}
\xymatrix @!0 @R=0.8cm @C=1.5cm
{
&\frac{1}{0}\ar@{-}[ldd]\ar@{-}[dd]\ar@{-}[rdd]\ar@{-}[rrdd]\ar@{-}[r]
&\frac{4}{1}\ar@{-}[r]
\ar@{-}[rdd]&\frac{7}{2}\ar@{-}[r]\ar@{-}[dd]&\frac{10}{3}\ar@{-}[r]\ar@{-}[ldd]\ar@{-}[dd]\ar@{-}[rdd]
&\frac{33}{10}\ar@{--}[rr]\ar@{-}[dd]&
& \ar@{-}[r]\ar@{-}[dd]
& \frac{r}{s}\ar@{-}[ldd]\\
\\
\frac{0}{1}\ar@{-}[r]
&\frac{1}{1}\ar@{-}[r]&\frac{2}{1}\ar@{-}[r]&\frac{3}{1}\ar@{-}[r]
&\frac{13}{4}\ar@{-}[r]
&\frac{23}{7}\ar@{--}[rr]&
&&
}
\end{equation}

The following statement is easily proved by induction.
It can be viewed as a reformulation of the result of Series~\cite{Ser};
for more details, see Section~\ref{TrsFarSec}.

\begin{fac}
\label{CaroThm}
Labeling the vertices of the triangulation $\mathbb{T}_{r/s}$
according to the above rule, the vertex $k+1$ receives the label~$\frac{r}{s}$.
\end{fac}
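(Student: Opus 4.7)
The plan is to prove, by induction on $i\in\{1,\ldots,k+1\}$, the strengthened claim that vertex~$i$ of~$\mathbb{T}_{r/s}$ receives the label equal to the $(i-1)$-th convergent $\frac{r_{i-1}}{s_{i-1}}$ of the negative continued fraction $\llbracket c_1,\ldots,c_k\rrbracket$, with the convention $\frac{r_0}{s_0}=\frac{1}{0}$. Setting $i=k+1$ then yields Fact~\ref{CaroThm}.

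The key observation is that, viewing each Farey label $\frac{p}{q}$ as a vector $\vec{V}=(p,q)^T\in\Z^2$, the summation $\oplus$ becomes ordinary vector addition, and the $c_i$ triangles incident to the top interior vertex~$i$ form a fan whose $c_i-1$ intermediate bottom vertices carry labels in arithmetic progression with common difference~$\vec{V}_i$. Writing $\vec{W}_{i-1}$ and $\vec{W}_i$ for the vectors at the ``pivot'' bottom vertices shared with the fans at vertices $i-1$ and $i+1$ respectively, iterating the Farey rule inside the fan and across the linking triangle $(i,b_i,i+1)$ yields the pair of local recurrences
\[
\vec{W}_i \;=\; \vec{W}_{i-1}+(c_i-2)\,\vec{V}_i, \qquad \vec{V}_{i+1}\;=\;\vec{V}_i+\vec{W}_i.
\]
Substituting the identity $\vec{W}_{i-1}=\vec{V}_i-\vec{V}_{i-1}$ (which is precisely the second recurrence applied at the previous step) collapses these into the classical convergent recurrence $\vec{V}_{i+1}=c_i\vec{V}_i-\vec{V}_{i-1}$. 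Together with the initial data $\vec{V}_1=(1,0)^T$ and $\vec{V}_2=(c_1,1)^T$ (computed explicitly along the first fan), this forces $\vec{V}_i=(r_{i-1},s_{i-1})^T$ for every~$i$.

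The main obstacle is a sign asymmetry at the boundary: the exterior triangle $(0,1,n-1)$ makes $\vec{W}_0=\vec{V}_0+\vec{V}_1$ with a plus sign, whereas the interior subtraction pattern $\vec{W}_{i-1}=\vec{V}_i-\vec{V}_{i-1}$ requires a minus. This is handled by bundling the entire first fan into the base case of the induction: once $\vec{V}_2=(c_1,1)^T$ is verified directly to coincide with $\llbracket c_1\rrbracket$, the subtraction pattern propagates uniformly for all $i\geq 1$ and the induction closes without further case distinction.
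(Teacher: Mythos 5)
Your proof is correct and takes essentially the approach the paper intends: the paper merely remarks that this Fact ``is easily proved by induction'' and notes (in the Remark that follows it) that the vertices $2,\ldots,k+1$ carry the consecutive convergents of $\llbracket c_1,\ldots,c_k\rrbracket$, which is exactly the strengthened claim you induct on. Your fan-by-fan derivation of $\vec{V}_{i+1}=c_i\vec{V}_i-\vec{V}_{i-1}$ and the separate treatment of the first fan are sound; the only nitpick is that the collapsed recurrence propagates for $i\geq 2$ rather than ``for all $i\geq 1$'', which your base case $\vec{V}_1=(1,0)^T$, $\vec{V}_2=(c_1,1)^T$ already accounts for.
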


\begin{rem}
More generally,
 all the rationals labeling the vertices $2,3,\ldots,k,{k+1}$
are the consecutive convergents of the negative continued fraction
$\llbracket{}c_1,\ldots,c_k\rrbracket{}$
representing $\frac{r}{s}$.
\end{rem}

\subsection{Recovering $r$ and $s$ with the Ptolemy-Pl\"ucker rule}\label{CFPtoRuSec}

In Euclidean geometry,
the Ptolemy relation is the formula relating the lengths of
the diagonals and sides of an inscribed quadrilateral.
It reads
$$
x_{1,3}x_{2,4}=
x_{1,2}x_{3,4}+x_{2,3}x_{4,1},
$$
where $x_{i,j}$ is the Euclidean length between the vertices $i$ and $j$.
\begin{center}
\psscalebox{1.0 1.0} 
{\psset{unit=0.6cm}
\begin{pspicture}(0,-2.26)(4.535,2.26)
\psarc[linecolor=black, linewidth=0.02, dimen=outer](2.46,0.25){2.0}{-180.0}{308.0}
\psdots[linecolor=black, dotsize=0.16](1.26,1.85)
\psdots[linecolor=black, dotsize=0.16](0.46,0.25)
\psdots[linecolor=black, dotsize=0.16](2.46,-1.75)
\psdots[linecolor=black, dotsize=0.16](4.06,1.45)
\psline[linecolor=black, linewidth=0.04](1.26,1.85)(0.46,0.25)(2.46,-1.75)(4.06,1.45)(1.26,1.85)
\psline[linecolor=black, linewidth=0.04, linestyle=dashed, dash=0.17638889cm 0.10583334cm](1.26,1.85)(2.46,-1.75)
\psline[linecolor=black, linewidth=0.04, linestyle=dashed, dash=0.17638889cm 0.10583334cm](0.46,0.25)(4.06,1.45)
\rput(0.06,0.25){1}
\rput[tl](0.86,2.25){2}
\rput(4.46,1.45){3}
\rput(2.46,-2.15){4}
\end{pspicture}
}

\end{center}
In algebraic geometry and combinatorics,
the Ptolemy relations appear as the relations between the 
Pl\"ucker coordinates of the Grassmannian $Gr_{2,n}$, so that they are often called Ptolemy-Pl\"ucker relations.
We will use this name in the sequel.
They became an important and general rule in the theory of
cluster algebras~\cite{FZ,FZ1}.
The ``Ptolemy-Pl\"ucker rule'' provides a way to calculate new variables from the old ones.

Let us explain how the Ptolemy rule
allows one to calculate the numerator~$r$ and the denominator~$s$ 
of the continued fraction~(\ref{NegRegCFEq})
from the corresponding triangulation~$\mathbb{T}_{r/s}$.

Given a triangulated $n$-gon with exactly two exterior triangles,
we will assign a value~$x_{i,j}$ to all its edges~$(i,j)$ with $i\leq{}j$,
so that the system of equations
\begin{equation}
\label{PPCompleteEq}
\left\{
\begin{array}{rcl}
x_{i,j}x_{k,\ell}&=&x_{i,k}x_{j,\ell}+x_{i,\ell}x_{k,j},
\qquad
i\leq{}k\leq{}j\leq{}\ell,\\[6pt]
x_{i,i}&=&0,
\end{array}
\right.
\end{equation}
is satisfied.
The system~(\ref{PPCompleteEq}) will be called the {\it Ptolemy-Pl\"ucker relations}.

\begin{fac}
\label{KouchThm}
(i)
The labels $x_{i,j}$ satisfying~(\ref{PPCompleteEq})
are uniquely determined by the values $x_{i,j}$ 
of the sides and diagonals of the triangulation.

(ii)
Assume that~$x_{i,j}=1$
whenever~$(i,j)$ is a side or a 
diagonal of the triangulation
$$
\xymatrix @!0 @R=0.8cm @C=1.5cm
{
&\bullet\ar@{-}[ldd]_{1}\ar@{-}[dd]_1\ar@{-}[rdd]_<<<<<<<1\ar@{-}[rrdd]^1\ar@{-}[r]^1
&\bullet\ar@{-}[r]^1
\ar@{-}[rdd]^1&\bullet\ar@{--}[r]\ar@{-}[dd]^1&\\
\\
\bullet\ar@{-}[r]_1
&\bullet\ar@{-}[r]_1&\bullet\ar@{-}[r]_1&\bullet\ar@{--}[r]
&
}
$$
Then all the labels~$x_{i,j}$ are positive integers.

(iii)
In the triangulation~$\mathbb{T}_{r/s}$,
the assumption from Part~(ii) implies the labeling 
$$
\left\{
\begin{array}{rcl}
x_{0,k+1} &= &r\\[4pt]
x_{1,k+1} &= &s.
\end{array}
\right.
$$
\end{fac}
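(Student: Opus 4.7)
My plan is to prove (i) by induction on a ``crossing number'' with the triangulation, deduce (ii) as an immediate consequence of the same induction, and then establish (iii) by exhibiting an explicit solution built from the Farey labels of Fact~\ref{CaroThm} and invoking uniqueness from~(i).

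For (i), define $\nu(i,j)$ to be the number of triangulation diagonals that the chord $(i,j)$ crosses transversally; the chords with $\nu=0$ are precisely the sides and diagonals of the triangulation. For any chord with $\nu(i,j)\ge 1$, pick $(k,\ell)$ to be the \emph{first} triangulation diagonal met by $(i,j)$ when travelling from $i$, so that the triangle of the triangulation on the $i$-side of $(k,\ell)$ is exactly $\{i,k,\ell\}$. The Ptolemy-Pl\"ucker relation then reads
$$
x_{i,j} \;=\; \frac{x_{i,k}\,x_{j,\ell} + x_{i,\ell}\,x_{k,j}}{x_{k,\ell}}.
$$
By construction $(i,k)$ and $(i,\ell)$ are edges of the triangulation, so $\nu(i,k)=\nu(i,\ell)=0$; and since no two triangulation diagonals cross, a short case analysis shows that every triangulation diagonal crossing $(k,j)$ or $(j,\ell)$ must also cross $(i,j)$, while $(k,\ell)$ itself crosses $(i,j)$ but not $(k,j)$ or $(j,\ell)$, so $\nu(k,j),\nu(j,\ell) < \nu(i,j)$. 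Induction on $\nu$ proves~(i). For~(ii), setting $x_{k,\ell}=1$ in the same formula yields $x_{i,j}=x_{i,k}x_{j,\ell}+x_{i,\ell}x_{k,j}$, a sum of products of previously constructed positive integers.

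For (iii), introduce the candidate
$$
\tilde x_{i,j}\;:=\;|p_iq_j-p_jq_i|,
$$
where $\tfrac{p_i}{q_i}$ is the Farey label of vertex $i$ from Fact~\ref{CaroThm}, and write $v_i:=(p_i,q_i)\in\Z^2$. By (i), it suffices to verify two conditions on $\tilde x$. \emph{First}, $\tilde x_{i,j}=1$ on every side or diagonal of the triangulation: this is proved by induction along the Farey construction, using that inside any triangle whose labels satisfy the mediant rule $v_c=v_a+v_b$ one has $\det(v_a,v_c)=\det(v_a,v_b)=\det(v_c,v_b)$, so the initial value $|\det(v_0,v_1)|=1$ propagates to each new edge. \emph{Second}, $\tilde x$ satisfies the system~\eqref{PPCompleteEq}: the signed bracket $[v_a,v_b]:=p_aq_b-p_bq_a$ obeys the classical three-term Pl\"ucker identity
$$
[v_i,v_j][v_k,v_\ell]\;=\;[v_i,v_k][v_j,v_\ell]+[v_i,v_\ell][v_k,v_j]
$$
for any four $2$-vectors, and for each four-tuple $i\le k\le j\le\ell$ the three products in this identity share a common sign, so taking absolute values yields~\eqref{PPCompleteEq}. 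The required sign consistency follows from the fact that the vectors $v_a$ all lie in the closed first quadrant and their arguments vary monotonically with the cyclic order of the polygon, because each new Farey mediant $v_c=v_a+v_b$ has argument strictly between $\arg v_a$ and $\arg v_b$. Uniqueness from~(i) then forces $x_{i,j}=\tilde x_{i,j}$ everywhere; substituting $v_0=(0,1)$, $v_1=(1,0)$ and $v_{k+1}=(r,s)$ yields $x_{0,k+1}=r$ and $x_{1,k+1}=s$.

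The main obstacle is the sign-consistency step in~(iii): the Pl\"ucker identity is genuinely signed, whereas~\eqref{PPCompleteEq} is unsigned. The angular-monotonicity argument above seems the cleanest route, and it crucially relies on the fact that Farey mediants of vectors in the first quadrant produce vectors of intermediate argument, so that the cyclic order of the polygon is faithfully recorded by the angular order of the labelling vectors.
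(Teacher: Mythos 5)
Your proof is correct, and it is genuinely different from what the paper does. The paper gives no self-contained argument here: Parts~(i) and~(ii) are delegated to the cluster-algebra literature~\cite[Section~2.1.1]{GSV}, and Part~(iii) is deduced from Coxeter's determinant formula (Fact~\ref{KouchGenThm}), itself subsumed in Theorem~\ref{PPCompleteThm}, whose proof runs through continuants: Lemma~\ref{CoxLem} identifies $x_{i-1,j+1}$ with $K_{j-i+1}(c_i,\ldots,c_j)$ via the three-term recurrence, and existence of the labelling rests on Euler's identity (Theorem~\ref{EulerThm}). You replace this by (a) an induction on the number of triangulation diagonals crossed by a chord, which handles (i) and (ii) at once and, unlike the snake-specific setup, works verbatim for an arbitrary triangulation (your key combinatorial claim --- that any triangulation diagonal crossing $(k,j)$ or $(j,\ell)$ also crosses $(i,j)$, while $(k,\ell)$ does not --- is correct, since a diagonal violating it would have to cross one of the triangulation edges $(i,k)$, $(i,\ell)$ or $(k,\ell)$); and (b) for (iii), an explicit solution $\tilde x_{i,j}=|p_iq_j-p_jq_i|$ built from the Farey labels, which is exactly formula~\eqref{RemPtKouchEq} of Fact~\ref{KouchGenThm}, so you in effect reprove Coxeter's statement. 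Your handling of the one delicate point --- converting the signed three-term Pl\"ucker identity into the unsigned system~\eqref{PPCompleteEq} --- is sound: the mediant rule keeps all label vectors in the first quadrant with arguments interleaving consistently with the cyclic order of the polygon, so for any quadruple $i\leq k\leq j\leq\ell$ all six brackets, hence all three products, have the same sign. What the two routes buy is complementary: the paper's continuant machinery extends directly to general solutions of $M(c_1,\ldots,c_n)=\pm\Id$ (Theorems~\ref{PPCompleteThm} and~\ref{PPCompleteThmBis}) and ties the labels to the entries of $M$; your argument is elementary, self-contained, and makes the geometric origin of the positivity transparent, though the sign analysis in (iii) does use the specific monotone structure of $\mathbb{T}_{r/s}$ and would need the total-positivity hypothesis to generalize.
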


Parts~(i) and~(ii) are widely known in the theory of cluster algebra;
see~\cite[Section~2.1.1]{GSV}.
We do not dwell on the proof here.

Part~(iii) was already known to Coxeter~\cite[Eq.(5.6)]{Cox}
who proved (in a different context) the following more general statement.

\begin{fac}
\label{KouchGenThm}
Under the assumption that~$x_{i,j}=1$
whenever~$(i,j)$ is a side or a 
diagonal of the triangulation,
the integers $x_{i,j}$ are calculated as $2\times2$ determinants:
\begin{equation}
\label{RemPtKouchEq}
x_{i,j}=
\det\left(
\begin{array}{cc}
r_i&r_j\\[4pt]
s_i&s_j
\end{array}
\right)=r_is_j-s_ir_j,
\end{equation}
where~$\frac{r_i}{s_i}$ and~$\frac{r_j}{s_j}$ are the rationals labeling the vertices~$i$ and~$j$,
as in~(\ref{FFracEq}).
\end{fac}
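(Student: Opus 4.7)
The plan is to verify that the function $y_{i,j} := r_i s_j - s_i r_j$ (up to sign) satisfies the hypotheses that pin down the unique solution of the system~(\ref{PPCompleteEq}) with $x_{i,j}=1$ on the edges and diagonals of $\mathbb{T}_{r/s}$, and then invoke the uniqueness statement of Fact~\ref{KouchThm}(i). So I need (a)~$y_{i,j}$ satisfies the Ptolemy-Pl\"ucker relations, and (b)~$y_{i,j}=\pm 1$ on sides and diagonals of $\mathbb{T}_{r/s}$. The sign normalization is a matter of fixing a coherent orientation on the cyclically-ordered vertices; I will gloss over it in the plan and work with the absolute values where convenient, understanding that the formula~(\ref{RemPtKouchEq}) is inherently up to a global sign coming from this choice.

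For step~(a), I would simply invoke the classical Pl\"ucker identity for $2\times 2$ minors of a $2\times n$ matrix. Concretely, for any four column vectors $(r_i,s_i),(r_j,s_j),(r_k,s_k),(r_\ell,s_\ell)$ in $\Z^2$, expansion gives the determinantal identity
\[
(r_is_j-s_ir_j)(r_ks_\ell-s_kr_\ell) \;=\; (r_is_k-s_ir_k)(r_js_\ell-s_jr_\ell) + (r_is_\ell-s_ir_\ell)(r_ks_j-s_kr_j),
\]
which, after matching cyclic order, is exactly the shape of the Ptolemy-Pl\"ucker relation in~(\ref{PPCompleteEq}). This step is essentially routine bookkeeping.

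For step~(b), I would argue by induction on the triangles of $\mathbb{T}_{r/s}$, built up one triangle at a time starting from the initial triangle containing the vertices labeled $\tfrac{0}{1}$ and $\tfrac{1}{0}$, for which $y=\pm 1$ is immediate. The inductive step rests on the \emph{Farey triangle lemma}: if $\tfrac{r_c}{s_c}=\tfrac{r_a+r_b}{s_a+s_b}$, then an elementary computation shows
\[
r_a s_c - s_a r_c \;=\; r_a s_b - s_a r_b \;=\; -(r_b s_c - s_b r_c),
\]
so that the three pairwise determinants of a Farey triangle have the same absolute value. Since every triangulation of a polygon is obtained from the base triangle by successively gluing triangles via the Farey summation rule of Section~\ref{FSLabSec}, this induction gives $|y_{i,j}|=1$ on all sides and all diagonals of $\mathbb{T}_{r/s}$.

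Combining (a) and (b), $y_{i,j}$ fulfils the hypotheses of Fact~\ref{KouchThm}(i), which therefore forces $x_{i,j}=y_{i,j}$ on all pairs of vertices. The main obstacle I anticipate is purely a matter of sign conventions: making the determinantal Pl\"ucker identity land on the same side as the Ptolemy form~(\ref{PPCompleteEq}) and reconciling the fact that $r_0s_1-s_0r_1=-1$ while $r_1s_{n-1}-s_1r_{n-1}=+1$ requires a fixed orientation for the cyclic order of the Farey-labeled vertices. Once this orientation is pinned down at the beginning, the rest of the proof is a direct assembly of the Pl\"ucker identity and the Farey-sum induction.
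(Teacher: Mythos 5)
Your argument is correct, and it takes a genuinely different route from the paper's. The paper does not prove Fact~\ref{KouchGenThm} on the spot: it attributes the statement to Coxeter and defers to the more general Theorem~\ref{PPCompleteThm}, whose proof (via Lemma~\ref{CoxLem}) shows by induction that the Ptolemy--Pl\"ucker relations force $x_{i-1,j+1}=K_{j-i+1}(c_i,\ldots,c_j)$, i.e.\ that the labels are continuants, with consistency supplied by Euler's identity; the determinant formula then follows by identifying continuants with $2\times2$ minors of the matrix of convergents (Proposition~\ref{MatConv}). You instead start from the candidate $y_{i,j}=r_is_j-s_ir_j$, observe that the three-term Grassmann--Pl\"ucker identity for $2\times2$ minors is literally the first line of~(\ref{PPCompleteEq}) once the indices are put in the order $i\leq k\leq j\leq\ell$, establish $|y_{i,j}|=1$ on all sides and diagonals by the Farey-sum computation (each new triangle of $\mathbb{T}_{r/s}$ contributes two new edges whose determinants equal, up to sign, that of the edge it is glued onto), and conclude by the uniqueness clause of Fact~\ref{KouchThm}(i). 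This is shorter and bypasses continuants entirely --- which is no accident, since Euler's identity \emph{is} the Pl\"ucker relation, as the paper's own Pfaffian proof of Theorem~\ref{EulerThm} makes explicit --- but it leans on the uniqueness part of Fact~\ref{KouchThm}(i), which the paper also leaves unproved, whereas the continuant route delivers uniqueness and the closed formula in one pass.

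One point you should pin down rather than wave at: the sign discrepancy is not a single global sign. With the paper's vertex numbering one has $y_{0,j}=r_0s_j-s_0r_j=-r_j<0$ for all $j\geq1$, while $y_{i,j}>0$ for $1\leq i<j\leq n-1$; so the literal formula~(\ref{RemPtKouchEq}) would give $x_{0,k+1}=-r$, contradicting Fact~\ref{KouchThm}(iii). (This is an imprecision already present in the statement itself.) The clean resolution is to read the vertices in the linear order $1,2,\ldots,n-1,0$, along which the Farey labels strictly decrease from $\frac{1}{0}$ to $\frac{0}{1}$; since all denominators involved are nonnegative, $r_is_j-s_ir_j>0$ whenever $i$ precedes $j$ in this order, every side and diagonal of the triangulation then receives the value $+1$ exactly, and the uniqueness argument applies verbatim to give $x_{i,j}=y_{i,j}$. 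With that convention fixed at the outset, your proof is complete.
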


We will prove yet a more general result in Section~\ref{PtoSec}
(see Theorem~\ref{PPCompleteThm}).

We illustrate the statement~(iii) by the following diagram.
$$
\xymatrix @!0 @R=0.8cm @C=1.5cm
{
&\bullet\ar@{-}[ldd]\ar@{-}[dd]\ar@{-}[rdd]\ar@{-}[rrdd]\ar@{-}[r]\ar@/^0.8pc/@{~}[rrrrrrr]^>>>>>>>>>>>>>s
&\bullet\ar@{-}[r]
\ar@{-}[rdd]&\bullet\ar@{-}[r]\ar@{-}[dd]&\bullet\ar@{-}[r]\ar@{-}[ldd]\ar@{-}[dd]\ar@{-}[rdd]
&\bullet\ar@{--}[rr]\ar@{-}[dd]&
& \bullet\ar@{-}[r]\ar@{-}[dd]
& \bullet\ar@{-}[ldd]\\
\\
\bullet\ar@{-}[r]\ar@{~}[uurrrrrrrr]_>>>>>>>>>>>>>>>>>>>>>>>>r
&\bullet\ar@{-}[r]&\bullet\ar@{-}[r]&\bullet\ar@{-}[r]
&\bullet\ar@{-}[r]
&\bullet\ar@{--}[rr]&
&\bullet&
}
$$

\subsection{Triangulations~$\mathbb{T}_{r/s}$ inside the Farey graph}\label{TrsFarSec}

The triangulation \eqref{FFracEq} can be naturally embedded in the Farey tessellation.
In this section we explain how to extract 
the triangulation~$\mathbb{T}_{r/s}$ from the Farey tessellation.
This construction is due to  C. Series~\cite{Ser},
and it allows one to deduce Fact~\ref{CaroThm} from her result.

\begin{defn}
{\rm
a)
The set of all rational numbers $\Q$, completed by $\infty$
represented by~$\frac{1}{0}$, form a graph called the {\it Farey graph}.
Two rationals written as irreducible fractions,~$\frac{r'}{s'}$ and~$\frac{r''}{s''}$, 
are connected by an edge if and only if $r's''-r''s'=\pm1$.

b)
Including $\Q\cup\{\infty\}$ into the border of the hyperbolic half-plane~$H$,
the edges are often represented as geodesics of~$H$
(which is a half-circle) and the Farey graph splits $H$ into an infinite set of triangles
called the {\it Farey tessellation}.
}
\end{defn}

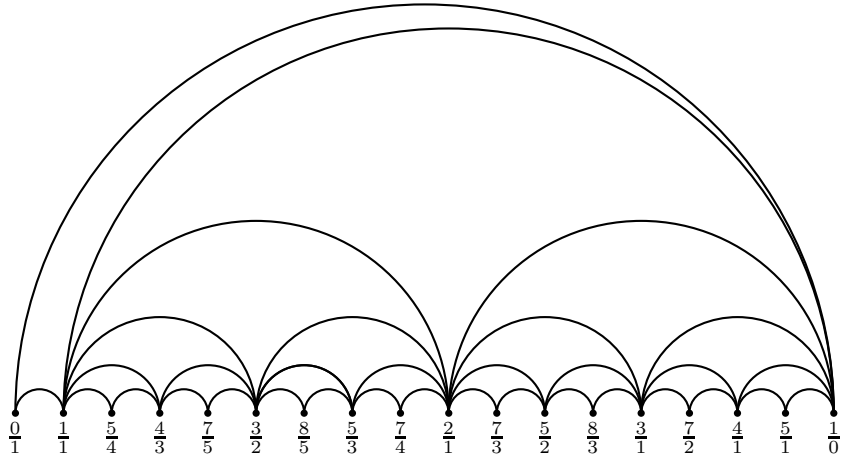
\begin{figure}[htbp]
\begin{center}

\psscalebox{1.0 1.0} 
{\psset{unit=0.8cm}
\begin{pspicture}(0,-3.7125)(13.757692,3.7125)
\psdots[linecolor=black, dotsize=0.12](3.2788463,-3.1075)
\psdots[linecolor=black, dotsize=0.12](2.478846,-3.1075)
\psdots[linecolor=black, dotsize=0.12](1.6788461,-3.1075)
\psdots[linecolor=black, dotsize=0.12](0.87884617,-3.1075)
\psdots[linecolor=black, dotsize=0.12](0.07884616,-3.1075)
\psdots[linecolor=black, dotsize=0.12](4.078846,-3.1075)
\psdots[linecolor=black, dotsize=0.12](4.878846,-3.1075)
\psdots[linecolor=black, dotsize=0.12](5.6788464,-3.1075)
\psdots[linecolor=black, dotsize=0.12](6.478846,-3.1075)
\psdots[linecolor=black, dotsize=0.12](7.2788463,-3.1075)
\psdots[linecolor=black, dotsize=0.12](8.078846,-3.1075)
\psdots[linecolor=black, dotsize=0.12](8.878846,-3.1075)
\psdots[linecolor=black, dotsize=0.12](9.678846,-3.1075)
\psdots[linecolor=black, dotsize=0.12](10.478847,-3.1075)
\psdots[linecolor=black, dotsize=0.12](11.278846,-3.1075)
\psdots[linecolor=black, dotsize=0.12](12.078846,-3.1075)
\psdots[linecolor=black, dotsize=0.12](12.878846,-3.1075)
\psarc[linecolor=black, dotsize=0.12, dimen=outer](6.878846,-3.1075){6.8}{0.0}{180.0}
\psdots[linecolor=black, dotsize=0.12](13.678846,-3.1075)
\psarc[linecolor=black, dotsize=0.12, dimen=outer](6.878846,-3.1075){0.4}{0.0}{180.0}
\psarc[linecolor=black, dotsize=0.12, dimen=outer](7.6788464,-3.1075){0.4}{0.0}{180.0}
\psarc[linecolor=black, dotsize=0.12, dimen=outer](8.478847,-3.1075){0.4}{0.0}{180.0}
\psarc[linecolor=black, dotsize=0.12, dimen=outer](9.278846,-3.1075){0.4}{0.0}{180.0}
\psarc[linecolor=black, dotsize=0.12, dimen=outer](10.078846,-3.1075){0.4}{0.0}{180.0}
\psarc[linecolor=black, dotsize=0.12, dimen=outer](10.878846,-3.1075){0.4}{0.0}{180.0}
\psarc[linecolor=black, dotsize=0.12, dimen=outer](11.678846,-3.1075){0.4}{0.0}{180.0}
\psarc[linecolor=black, dotsize=0.12, dimen=outer](12.478847,-3.1075){0.4}{0.0}{180.0}
\psarc[linecolor=black, dotsize=0.12, dimen=outer](13.278846,-3.1075){0.4}{0.0}{180.0}
\psarc[linecolor=black, dotsize=0.12, dimen=outer](0.47884616,-3.1075){0.4}{0.0}{180.0}
\psarc[linecolor=black, dotsize=0.12, dimen=outer](1.2788461,-3.1075){0.4}{0.0}{180.0}
\psarc[linecolor=black, dotsize=0.12, dimen=outer](2.0788462,-3.1075){0.4}{0.0}{180.0}
\psarc[linecolor=black, dotsize=0.12, dimen=outer](2.8788462,-3.1075){0.4}{0.0}{180.0}
\psarc[linecolor=black, dotsize=0.12, dimen=outer](3.6788461,-3.1075){0.4}{0.0}{180.0}
\psarc[linecolor=black, dotsize=0.12, dimen=outer](4.478846,-3.1075){0.4}{0.0}{180.0}
\psarc[linecolor=black, dotsize=0.12, dimen=outer](5.2788463,-3.1075){0.4}{0.0}{180.0}
\psarc[linecolor=black, dotsize=0.12, dimen=outer](6.078846,-3.1075){0.4}{0.0}{180.0}
\psarc[linecolor=black, dotsize=0.12, dimen=outer](1.6788461,-3.1075){0.8}{0.0}{180.0}
\psarc[linecolor=black, dotsize=0.12, dimen=outer](3.2788463,-3.1075){0.8}{0.0}{180.0}
\psarc[linecolor=black, dotsize=0.12, dimen=outer](4.878846,-3.1075){0.8}{0.0}{180.0}
\psarc[linecolor=black, dotsize=0.12, dimen=outer](6.478846,-3.1075){0.8}{0.0}{180.0}
\psarc[linecolor=black, dotsize=0.12, dimen=outer](8.078846,-3.1075){0.8}{0.0}{180.0}
\psarc[linecolor=black, dotsize=0.12, dimen=outer](9.678846,-3.1075){0.8}{0.0}{180.0}
\psarc[linecolor=black, dotsize=0.12, dimen=outer](11.278846,-3.1075){0.8}{0.0}{180.0}
\psarc[linecolor=black, dotsize=0.12, dimen=outer](12.878846,-3.1075){0.8}{0.0}{180.0}
\psarc[linecolor=black, dotsize=0.12, dimen=outer](2.478846,-3.1075){1.6}{0.0}{180.0}
\psarc[linecolor=black, dotsize=0.12, dimen=outer](4.878846,-3.1075){0.8}{0.0}{180.0}
\psarc[linecolor=black, dotsize=0.12, dimen=outer](5.6788464,-3.1075){1.6}{0.0}{180.0}
\psarc[linecolor=black, dotsize=0.12, dimen=outer](8.878846,-3.1075){1.6}{0.0}{180.0}
\psarc[linecolor=black, dotsize=0.12, dimen=outer](12.078846,-3.1075){1.6}{0.0}{180.0}
\psarc[linecolor=black, dotsize=0.12, dimen=outer](4.078846,-3.1075){3.2}{0.0}{180.0}
\psarc[linecolor=black, dotsize=0.12, dimen=outer](10.478847,-3.1075){3.2}{0.0}{180.0}
\psarc[linecolor=black, dotsize=0.12, dimen=outer](7.2788463,-3.1075){6.4}{0.0}{180.0}
\rput(0.07884616,-3.5075){$\frac01$}
\rput(0.87884617,-3.5075){$\frac11$}
\rput(1.6788461,-3.5075){$\frac54$}
\rput(2.478846,-3.5075){$\frac43$}
\rput(3.2788463,-3.5075){$\frac75$}
\rput(4.078846,-3.5075){$\frac32$}
\rput(4.878846,-3.5075){$\frac85$}
\rput(5.6788464,-3.5075){$\frac53$}
\rput(6.478846,-3.5075){$\frac74$}
\rput(7.2788463,-3.5075){$\frac21$}
\rput(8.078846,-3.5075){$\frac73$}
\rput(8.878846,-3.5075){$\frac52$}
\rput(9.678846,-3.5075){$\frac83$}
\rput(10.478847,-3.5075){$\frac31$}
\rput(11.278846,-3.5075){$\frac72$}
\rput(12.078846,-3.5075){$\frac41$}
\rput(12.878846,-3.5075){$\frac51$}
\rput(13.678846,-3.5075){$\frac10$}
\end{pspicture}
}
\caption{A fragment of the Farey graph}
\label{default}
\end{center}
\end{figure}

Basic properties of the Farey graph and Farey tessellation can be found in~\cite{HW}, 
we will need the following.

a) The edges of the Farey tessellation never cross, except at the endpoints.

b) Every triangle in the Farey graph is of the form
$\left\{\frac{r'}{s'},\frac{r'+r''}{s'+s''},\frac{r''}{s''}\right\}$,
\begin{center}
\psscalebox{1.0 1.0} 
{
\begin{pspicture}(0,-1.315)(3.385,1.315)
\definecolor{colour0}{rgb}{1.0,0.0,0.2}
\psarc[linecolor=black, linewidth=0.02, dimen=outer](0.87,-0.685){0.8}{0.0}{180.0}
\psarc[linecolor=black, linewidth=0.02, dimen=outer](2.47,-0.685){0.8}{0.0}{180.0}
\psarc[linecolor=black, linewidth=0.02, dimen=outer](1.67,-0.685){1.6}{0.0}{180.0}
\rput(0.07,-1.085){$\frac{r'}{s'}$}
\rput(1.67,-1.085){$\frac{r'+r''}{s'+s''}$}
\rput(3.27,-1.085){$\frac{r''}{s''}$}
\end{pspicture}
}
\end{center}
We focus on the part of the graph consisting of rational numbers greater than 1. 

The construction of~\cite{Ser} is as follows.
Fix a rational number~$\frac{r}{s}$
and draw a vertical line $(L)\subset{}H$ through~$\frac{r}{s}$.
Collect all the triangles of the Farey tessellation crossed 
in their interior by this line. 
This leads to the triangulation $\mathbb{T}_{r/s}$.

The property  of the triangles
to be situated  ``base down'' and ``base up'' now read as:
 ''base at the left of $(L)$'' 
and ''base at the right of $(L)$''. 
The two exterior vertices are $\frac01$ and $\frac{r}{s}$.
The vertices are enumerated from 1 to $n$ from $\frac10$ to $\frac01$
in the decreasing order.
The vertex $\frac{r}{s}$ is the vertex number $k+1$.

\begin{ex}
Choosing $\frac{r}{s}=\frac{7}{5}$, we have the following picture:
\medskip
\begin{center}
\psscalebox{1.0 1.0} 
{\psset{unit=0.8cm}
\begin{pspicture}(0,-3.7125)(13.757692,3.7125)
\definecolor{colour0}{rgb}{1.0,0.8,1.0}
\definecolor{colour1}{rgb}{0.8,0.8,1.0}
\psarc[linecolor=black, linewidth=0.02, fillstyle=solid,fillcolor=colour0, dimen=outer](6.878846,-3.1075){6.8}{0.0}{180.0}
\psarc[linecolor=black, linewidth=0.02, fillstyle=solid, dimen=outer](0.47884616,-3.1075){0.4}{0.0}{180.0}
\psarc[linecolor=black, linewidth=0.02, fillstyle=solid,fillcolor=colour1, dimen=outer](7.2788463,-3.1075){6.4}{0.0}{180.0}
\rput(0.07884616,-3.5075){$\frac01$}
\rput(0.87884617,-3.5075){$\frac11$}
\rput(2.478846,-3.5075){$\frac43$}
\rput(3.2788463,-3.5075){$\frac75$}
\rput(4.078846,-3.5075){$\frac32$}
\rput(7.2788463,-3.5075){$\frac21$}
\rput(13.678846,-3.5075){$\frac10$}
\rput(3.65,3.5075){$(L)$}
\psarc[linecolor=black, linewidth=0.02, dimen=outer](4.078846,-3.1075){3.2}{0.0}{180.0}
\psarc[linecolor=black, linewidth=0.02, fillstyle=solid,fillcolor=colour0, dimen=outer](2.478846,-3.1075){1.6}{0.0}{180.0}
\psarc[linecolor=black, linewidth=0.02, fillstyle=solid, dimen=outer](1.6788461,-3.1075){0.8}{0.0}{180.0}
\psarc[linecolor=black, linewidth=0.02, fillstyle=solid,fillcolor=colour1, dimen=outer](3.2788463,-3.1075){0.8}{0.0}{180.0}
\psarc[linecolor=black, linewidth=0.02, fillstyle=solid, dimen=outer](10.478847,-3.1075){3.2}{0.0}{180.0}
\psarc[linecolor=black, linewidth=0.02, fillstyle=solid, dimen=outer](5.6788464,-3.1075){1.6}{0.0}{180.0}
\psarc[linecolor=black, linewidth=0.02, fillstyle=solid, dimen=outer](2.8788462,-3.1075){0.4}{0.0}{180.0}
\psarc[linecolor=black, linewidth=0.02, fillstyle=solid, dimen=outer](3.6788461,-3.1075){0.4}{0.0}{180.0}
\psdots[linecolor=black, dotsize=0.12](2.478846,-3.1075)
\psdots[linecolor=black, dotsize=0.12](4.078846,-3.1075)
\psdots[linecolor=black, dotsize=0.12](7.2788463,-3.1075)
\psdots[linecolor=black, dotsize=0.12](0.87884617,-3.1075)
\psdots[linecolor=black, dotsize=0.12](13.678846,-3.1075)
\psdots[linecolor=black, dotsize=0.12](0.07884616,-3.1075)
\psdots[linecolor=black, dotsize=0.12003673](3.2788463,-3.1075)
\psline[linecolor=black, linewidth=0.02, dash=0.17638889cm 0.10583334cm](3.2788463,-3.1075)(3.2788463,3.6925)
\end{pspicture}
}
\end{center}
where we have colored in pink the triangles at the left of $(L)$ and in blue those at the right of $(L)$. Note that the lowest triangle can be viewed either at the left or at the right of $(L)$. 
This is precisely the triangulation $\mathbb{T}_{7/5}$ (cf. Example~\ref{ExT75}) viewed inside the Farey tessellation. 
\end{ex}

\section{Matrices of negative and regular continued fractions}\label{MatHZBigSec}

It is convenient to use $2\times2$ matrices to represent continued fractions.
One reason is that the corresponding matrices belong to the group~$\SL(2,\Z)$
and allow the operations, such as multiplication, inverse, transposition; see~\cite{Poo}.
Another reason which is particularly important for us is that matrices
are more ``perennial'' than continued fractions.
They continue to exist when continued fractions are not well-defined 
(because of potential zeros in the denominators) and enjoy similar properties.

In this section, however, we still assume that the continued fractions are well-defined.
Consider, as in Section~\ref{TriangCFSec}, a rational number expanded
into continued fractions:
$$
\frac{r}{s}=
\llbracket{}c_1,\ldots,c_k\rrbracket=
[a_1,\ldots,a_{2m}].
$$
The information about these expansions is contained in the matrices
\begin{equation}
\label{SLEq}
M(c_1,\ldots,c_n):=
\left(
\begin{array}{cc}
c_1&-1\\[4pt]
1&0
\end{array}
\right)
\left(
\begin{array}{cc}
c_{2}&-1\\[4pt]
1&0
\end{array}
\right)
\cdots
\left(
\begin{array}{cc}
c_n&-1\\[4pt]
1&0
\end{array}
\right)
\end{equation}
and
\begin{equation}
\label{RegMatEq}
M^+(a_1,\ldots,a_{2m}):=
\left(
\begin{array}{cc}
a_1&1\\[4pt]
1&0
\end{array}
\right)
\left(
\begin{array}{cc}
a_2&1\\[4pt]
1&0
\end{array}
\right)
\cdots
\left(
\begin{array}{cc}
a_{2m}&1\\[4pt]
1&0
\end{array}
\right).
\end{equation}
Both matrices are elements of~$\SL(2,\Z)$.

The goal of this introductory section is to compare these two matrices and rewrite one from another.
This, in particular, implies formula~\eqref{HZRegEqt}.
The end of the section contains motivations for the sequel.

\subsection{The matrices of continued fractions}\label{MatConverSec}
The matrices~\eqref{SLEq} and~\eqref{RegMatEq} are known as the matrices of
continued fractions, because one has the following statement whose proof is elementary.
\begin{prop}
\label{MatConv}
One has
$$
M(c_1,\ldots,c_k)=
\left(
\begin{array}{cc}
r&-r'\\[4pt]
s&-s'
\end{array}
\right),
\qquad
M^+(a_1,\ldots,a_{2m})=
\left(
\begin{array}{cc}
r&r''\\[4pt]
s&s''
\end{array}
\right),
$$
where  $\frac{r}{s}=
[a_1,\ldots,a_{2m}]=
\llbracket{}c_1,\ldots,c_k\rrbracket$,
and where $\frac{r'}{s'}=
\llbracket{}c_1,\ldots,c_{k-1}\rrbracket$,
and $\frac{r''}{s''}=[a_1,\ldots,a_{2m-1}]$.
\end{prop}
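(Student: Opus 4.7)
The plan is to prove both identities by induction on the length of the matrix product. Since the regular case $M^+$ is structurally identical to the negative case $M$ (the sole difference being that each $-1$ becomes $+1$), I would carry the argument out in detail only for $M(c_1,\ldots,c_k)$ and indicate the parallel in passing.

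Set $\frac{r_j}{s_j} := \llbracket c_1,\ldots,c_j\rrbracket$ for $j = 1, \ldots, k$, and adopt the convention $r_0 = 1$, $s_0 = 0$ (the ``empty continued fraction'' $\frac{1}{0}$). I would prove by induction on $j$ that
$$M(c_1,\ldots,c_j) = \begin{pmatrix} r_j & -r_{j-1} \\ s_j & -s_{j-1} \end{pmatrix},$$
which specializes to the proposition when $j=k$ (with $r'/s' = r_{k-1}/s_{k-1}$). The base case $j=1$ is direct. The inductive step amounts to right-multiplying the length-$(j-1)$ formula by $\begin{pmatrix} c_j & -1 \\ 1 & 0 \end{pmatrix}$, which produces
$$\begin{pmatrix} c_j r_{j-1} - r_{j-2} & -r_{j-1} \\ c_j s_{j-1} - s_{j-2} & -s_{j-1} \end{pmatrix},$$
so everything reduces to verifying the classical convergent recurrence $r_j = c_j r_{j-1} - r_{j-2}$ and $s_j = c_j s_{j-1} - s_{j-2}$.

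The recurrence itself I would establish by a secondary induction on $j$, using the collapsing identity $\llbracket c_1,\ldots,c_{j-1},c_j\rrbracket = \llbracket c_1,\ldots,c_{j-2},\,c_{j-1} - \tfrac{1}{c_j}\rrbracket$: substituting $c_{j-1} \mapsto c_{j-1} - 1/c_j$ into the length-$(j-1)$ formula and clearing $c_j$ from the denominators yields precisely the claimed recursion. Coprimality of $(r_j, s_j)$ then falls out of $\det M(c_1,\ldots,c_j) = 1$, which forces $r_{j-1} s_j - r_j s_{j-1} = 1$. The regular case runs identically: with $\frac{p_j}{q_j} := [a_1,\ldots,a_j]$ and $p_0 = 1$, $q_0 = 0$, the same induction yields $M^+(a_1,\ldots,a_{2m}) = \begin{pmatrix} p_{2m} & p_{2m-1} \\ q_{2m} & q_{2m-1} \end{pmatrix}$, which is the second assertion with $r'' = p_{2m-1}$ and $s'' = q_{2m-1}$.

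The only real pitfall in this proof is sign bookkeeping and keeping the empty-CF conventions consistent at the left edge of the induction; once those are nailed down the argument is mechanical, and I do not anticipate any genuine obstacle.
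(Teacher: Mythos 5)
Your induction is correct and is exactly the elementary argument the paper has in mind: it omits the proof of Proposition~\ref{MatConv} as standard, and the convergent recurrence $r_j=c_jr_{j-1}-r_{j-2}$, $s_j=c_js_{j-1}-s_{j-2}$ that your argument reduces to is precisely the relation~\eqref{DEqEq} the paper invokes later when it restates this result via continuants in Proposition~\ref{EulProp}. Your closing remarks on the empty-fraction convention $\frac{1}{0}$ and on deducing coprimality from $\det M=1$ take care of the only points where such a proof could go astray.
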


Therefore, the matrices $M^+(a_1,\ldots,a_{2m})$ and $M(c_1,\ldots,c_k)$
have the same first column, but they are different.
There exists a simple relationship between these matrices.

\begin{prop}
\label{NegRegTheorem}
One has:
\begin{equation}
\label{NegRegMatTheEq}
M^+(a_1,\ldots,a_{2m})=
M(c_1,\ldots,c_k)\,R,
\end{equation}
where
$$
R=\left(
\begin{array}{cc}
1&1\\[4pt]
0&1
\end{array}
\right).
$$
\end{prop}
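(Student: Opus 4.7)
The plan is to treat formula \eqref{HZRegEqt} as a definition of $(c_1,\ldots,c_k)$ in terms of $(a_1,\ldots,a_{2m})$, verify the matrix identity directly, and then recover Fact~\ref{NRCFThm} as a consequence of Proposition~\ref{MatConv} by comparing first columns on both sides of \eqref{NegRegMatTheEq}.

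First I would isolate the computational core of the proposition as the following key lemma: for all positive integers $a,b$,
\begin{equation*}
M\big(a+1,\underbrace{2,\ldots,2}_{b-1}\big)\,R \;=\; M^+(a,b).
\end{equation*}
To prove this, set $T=\left(\begin{smallmatrix} 2 & -1 \\ 1 & 0 \end{smallmatrix}\right)$ and verify by a one-line induction on $j$ (using the fact that $T-\Id$ squares to zero) that
\begin{equation*}
T^{j} \;=\; \begin{pmatrix} j+1 & -j \\[2pt] j & -(j-1) \end{pmatrix}, \qquad j\geq 0.
\end{equation*}
A direct computation then gives $M(a+1,2,\ldots,2) = \left(\begin{smallmatrix} a+1 & -1 \\ 1 & 0 \end{smallmatrix}\right)T^{b-1} = \left(\begin{smallmatrix} ab+1 & -(ab-a+1) \\ b & -(b-1) \end{smallmatrix}\right)$, and right-multiplication by $R$ produces $\left(\begin{smallmatrix} ab+1 & a \\ b & 1 \end{smallmatrix}\right)$, which coincides with $M^+(a,b)=\left(\begin{smallmatrix} a & 1 \\ 1 & 0 \end{smallmatrix}\right)\left(\begin{smallmatrix} b & 1 \\ 1 & 0 \end{smallmatrix}\right)$.

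Next I would induct on $m$. The base case $m=1$ is exactly the key lemma with $(a,b)=(a_1,a_2)$. For the inductive step, factor off the last two elementary blocks:
\begin{equation*}
M^+(a_1,\ldots,a_{2m}) \;=\; M^+(a_1,\ldots,a_{2m-2})\,\begin{pmatrix} a_{2m-1} & 1 \\ 1 & 0 \end{pmatrix}\begin{pmatrix} a_{2m} & 1 \\ 1 & 0 \end{pmatrix}.
\end{equation*}
Apply the inductive hypothesis to the first factor, producing a trailing $R$, and absorb it into the next block via the trivial identity $R\cdot\left(\begin{smallmatrix} a & 1 \\ 1 & 0 \end{smallmatrix}\right) = \left(\begin{smallmatrix} a+1 & 1 \\ 1 & 0 \end{smallmatrix}\right)$. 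The right-hand side becomes $M(c_1,\ldots,c_{k'})\cdot M^+(a_{2m-1}+1,\,a_{2m})$. Applying the key lemma a second time, now to the pair $(a_{2m-1}+1,a_{2m})$, rewrites the second factor as $M\big(a_{2m-1}+2,\underbrace{2,\ldots,2}_{a_{2m}-1}\big)\,R$. The new block of coefficients is precisely the one prescribed by \eqref{HZRegEqt}, so concatenation yields $M(c_1,\ldots,c_k)\,R$ as required.

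The only conceptual step is spotting the key lemma; the rest is routine $2\times 2$ linear algebra together with bookkeeping. I do not expect any real obstacle once the lemma is in hand — the mechanism that makes the induction close is the single identity $R\cdot\left(\begin{smallmatrix} a & 1 \\ 1 & 0 \end{smallmatrix}\right) = \left(\begin{smallmatrix} a+1 & 1 \\ 1 & 0 \end{smallmatrix}\right)$, and it is exactly this shift by $+1$ that explains the asymmetry in \eqref{HZRegEqt} between the leading coefficient $a_1+1$ and the subsequent coefficients $a_{2i-1}+2$.
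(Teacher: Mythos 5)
Your argument is correct, and it closes the loop properly: defining $(c_1,\ldots,c_k)$ by \eqref{HZRegEqt}, proving the matrix identity, and then invoking Proposition~\ref{MatConv} to identify the first columns does simultaneously establish \eqref{NegRegMatTheEq} and Fact~\ref{NRCFThm} (the entries produced by \eqref{HZRegEqt} are all $\geq 2$, so uniqueness of the negative expansion applies). However, this is not the route the paper takes for this proposition: its official proof is a two-line geometric argument, reading off from the labeled triangulation $\mathbb{T}_{r/s}$ of \eqref{FFracEq} that the vertices $k$, $k+1$, $k+2$ carry $\frac{r'}{s'}$, $\frac{r}{s}$, $\frac{r''}{s''}$, so that the Farey sum gives $r=r'+r''$, $s=s'+s''$, which is exactly the column relation $M^+(a_1,\ldots,a_{2m})=M(c_1,\ldots,c_k)R$ by Proposition~\ref{MatConv}. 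That proof is shorter but leans on Facts~\ref{NRCFThm} and~\ref{CaroThm}, whose own justification the paper defers to Section~\ref{MatHZSec} -- where it carries out precisely the kind of ``alternative and independent'' elementary computation you propose. Compared with that Section~\ref{MatHZSec} computation, your organization is cleaner: the paper routes everything through Lemma~\ref{TransMat} in the form $R^a=-M(a+1,1,1)$ and $L^a=-M(1,2,\ldots,2,1,1)$, padding with trailing $(1,1)$'s and stripping them off at the end via $M(2,1,1)=-R$, which forces sign bookkeeping; your key lemma
$M\bigl(a+1,\underbrace{2,\ldots,2}_{b-1}\bigr)R=M^+(a,b)$
together with the shift identity
$R\left(\begin{smallmatrix} a&1\\ 1&0\end{smallmatrix}\right)=\left(\begin{smallmatrix} a+1&1\\ 1&0\end{smallmatrix}\right)$
avoids both the generator $L$ and all signs, and makes transparent why the leading coefficient in \eqref{HZRegEqt} is $a_1+1$ while the later odd-indexed ones are $a_{2i-1}+2$. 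What you lose relative to the paper's geometric proof is the conceptual picture (the identity as a statement about three adjacent vertices of one triangle in $\mathbb{T}_{r/s}$); what you gain is a self-contained argument that does not presuppose the combinatorial facts of Section~\ref{TriangCFSec}.
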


\begin{proof}
Formula~\eqref{NegRegMatTheEq} can be easily obtained using the results of the previous section.
Indeed, in the triangulation~$\mathbb{T}_{r/s}$ labeled as in ~\eqref{FFracEq},
we see that $\frac{r'}{s'},\frac{r}{s},\frac{r''}{s''}$ label the vertices
$k,k+1,k+2$, respectively.
This implies $\frac{r}{s}=\frac{r'}{s'}+\frac{r''}{s''}$ and hence~\eqref{NegRegMatTheEq}.
\end{proof}

Alternatively and independently, the relation between the matrices $M^+(a_1,\ldots,a_{2m})$ and $M(c_1,\ldots,c_k)$ 
can be established by elementary matrix computations. 
This will be done in the next sections.

\begin{ex}
\label{ExT75Bis}
Choosing, as in Example~\ref{ExT75},
the rational $\frac{r}{s}=\frac{7}{5}$,
one obtains
$$
M^+(1,2,1,1)=
\left(
\begin{array}{cc}
7&4\\[4pt]
5&3
\end{array}
\right),
\qquad
M(2,2,3)=
\left(
\begin{array}{cc}
7&-3\\[4pt]
5&-2
\end{array}
\right).
$$
Note that these matrices have different traces and therefore cannot be conjugacy equivalent.
\end{ex}

\subsection{Matrices $M^+(a_1,\ldots,a_{2m})$ and $M(c_1,\ldots,c_k)$ in terms of the generators}\label{MaGSec}
It will be useful to have the expressions of $M^+(a_1,\ldots,a_{2m})$ and $M(c_1,\ldots,c_k)$
in terms of the generators of~$\SL(2,\Z)$.
The following formulas are standard and can be found in many sources.

\begin{prop}
\label{MaTCompProp}
The matrices $M^+(a_1,\ldots,a_{2m})$ and $M(c_1,\ldots,c_k)$
have the following decompositions
\begin{eqnarray}
\label{RegMatGenEq}
M^+(a_1,\ldots,a_{2m})&=&
R^{a_1}L^{a_{2}}
R^{a_{3}}L^{a_{4}}\cdots{}
R^{a_{2m-1}}L^{a_{2m}},
\\[4pt]
\label{NegMatGenEq}
M(c_1,\ldots,c_k)&=&R^{c_1}S\,R^{c_{2}}S\cdots{}R^{c_k}S,
\end{eqnarray}
where
\begin{equation}
\label{RLSEq}
R=\left(
\begin{array}{cc}
1&1\\[4pt]
0&1
\end{array}
\right),
\qquad
L=\left(
\begin{array}{cc}
1&0\\[4pt]
1&1
\end{array}
\right),
\qquad
S=\left(
\begin{array}{cc}
0&-1\\[4pt]
1&0
\end{array}
\right).
\end{equation}
\end{prop}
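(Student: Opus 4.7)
My plan is to verify both decompositions by direct matrix multiplication; the proof is entirely computational with no real obstacle, so I would simply write down the key identities.

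For the negative continued fraction decomposition \eqref{NegMatGenEq}, I would start by computing the product
$$
R^{c}S=
\left(
\begin{array}{cc}
1&c\\[4pt]
0&1
\end{array}
\right)
\left(
\begin{array}{cc}
0&-1\\[4pt]
1&0
\end{array}
\right)
=
\left(
\begin{array}{cc}
c&-1\\[4pt]
1&0
\end{array}
\right).
$$
This identifies each factor of $M(c_1,\ldots,c_k)$ in \eqref{SLEq} with $R^{c_i}S$, and \eqref{NegMatGenEq} follows by substitution into the product.

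For the regular continued fraction decomposition \eqref{RegMatGenEq}, the key observation is that the factors must be grouped in consecutive pairs. I would compute
$$
\left(
\begin{array}{cc}
a&1\\[4pt]
1&0
\end{array}
\right)
\left(
\begin{array}{cc}
b&1\\[4pt]
1&0
\end{array}
\right)
=
\left(
\begin{array}{cc}
ab+1&a\\[4pt]
b&1
\end{array}
\right)
=
\left(
\begin{array}{cc}
1&a\\[4pt]
0&1
\end{array}
\right)
\left(
\begin{array}{cc}
1&0\\[4pt]
b&1
\end{array}
\right)
=R^{a}L^{b},
$$
which is why one assumed the number of terms $2m$ is even. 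Applying this pair identity to $(a_{2i-1},a_{2i})$ for $i=1,\ldots,m$ in the product \eqref{RegMatEq} yields \eqref{RegMatGenEq} immediately.

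Since both identities reduce to one-line matrix computations, the whole proof fits in a few lines and needs no induction. The only mildly subtle point worth flagging in the exposition is the role of the parity assumption on $2m$ in formula \eqref{RegMatGenEq}, which is what allows the alternation between $R$ and $L$ blocks to close up cleanly.
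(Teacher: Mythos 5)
Your proposal is correct and follows essentially the same route as the paper: the authors prove \eqref{NegMatGenEq} from the single identity $R^{c}S=\left(\begin{smallmatrix}c&-1\\1&0\end{smallmatrix}\right)$ and \eqref{RegMatGenEq} from the same pairwise computation $\left(\begin{smallmatrix}a&1\\1&0\end{smallmatrix}\right)\left(\begin{smallmatrix}b&1\\1&0\end{smallmatrix}\right)=R^{a}L^{b}$. Your remark on the role of the even length $2m$ is a sensible point of emphasis but does not change the argument.
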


For the sake of completeness, we give here an elementary proof.

\begin{proof}
Formula \eqref{RegMatGenEq} is obtained from the elementary computation
$$
\left(
\begin{array}{cc}
a_i&1\\[4pt]
1&0
\end{array}
\right)
\left(
\begin{array}{cc}
a_{i+1}&1\\[4pt]
1&0
\end{array}
\right)
=
\left(
\begin{array}{cc}
a_ia_{i+1}+1&a_i\\[4pt]
a_{i+1}&1
\end{array}
\right)=
\left(
\begin{array}{cc}
1&a_i\\[4pt]
0&1
\end{array}
\right)
\left(
\begin{array}{cc}
1&0\\[4pt]
a_{i+1}&1
\end{array}
\right)
=R^{a_i}L^{a_{i+1}}.
$$
Formula~(\ref{NegMatGenEq}) is obviously obtained from
$$
\left(
\begin{array}{cc}
c_i&-1\\[4pt]
1&0
\end{array}
\right)=R^{c_{i}}S.
$$
\end{proof}

\subsection{Converting the matrices}\label{MatHZSec}

The matrix $M^+(a_1,\ldots,a_{2m})$ with $a_i\geq1$
can be rewritten in the form~(\ref{SLEq}).

\begin{prop}
\label{NegRegProp}
One has:
\begin{equation}
\label{NegRegMat}
M^+(a_1,\ldots,a_{2m})=
-M\big(a_1+1,\underbrace{2,\ldots,2}_{a_2-1},\,
a_3+2,\underbrace{2,\ldots,2}_{a_4-1},\ldots,
a_{2m-1}+2,\underbrace{2,\ldots,2}_{a_{2m}},1,1\big).
\end{equation}
\end{prop}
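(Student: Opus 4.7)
The plan is to reduce Proposition~\ref{NegRegProp} to Proposition~\ref{NegRegTheorem} (already established) by absorbing the right-hand factor~$R$ into the $M$-product. Proposition~\ref{NegRegTheorem} gives $M^+(a_1,\ldots,a_{2m}) = M(c_1,\ldots,c_k)\,R$ with $(c_1,\ldots,c_k)$ the Hirzebruch sequence~\eqref{HZRegEqt}, and the definition of $M$ as a product of $2\times2$ blocks yields the tautological concatenation rule $M(c_1,\ldots,c_k)\,M(b_1,\ldots,b_\ell) = M(c_1,\ldots,c_k,b_1,\ldots,b_\ell)$. Thus everything reduces to expressing $R$ itself as $\pm M(\cdots)$ on a short sequence.

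The key observation is the identity $R = -M(2,1,1)$, verified by a direct $2\times 2$ computation:
\begin{equation*}
M(2,1,1) = \begin{pmatrix} 2 & -1 \\ 1 & 0 \end{pmatrix}\begin{pmatrix} 1 & -1 \\ 1 & 0 \end{pmatrix}^{\!2} = \begin{pmatrix} -1 & -1 \\ 0 & -1 \end{pmatrix} = -R.
\end{equation*}
Combining this with Proposition~\ref{NegRegTheorem} and the concatenation rule gives
\begin{equation*}
M^+(a_1,\ldots,a_{2m}) = M(c_1,\ldots,c_k)\,R = -M(c_1,\ldots,c_k,2,1,1).
\end{equation*}
One then simply reads off the resulting sequence: since \eqref{HZRegEqt} terminates with $\underbrace{2,\ldots,2}_{a_{2m}-1}$, the first appended $2$ enlarges this trailing block to length $a_{2m}$, and the two appended $1$'s provide the final tail, yielding precisely the sequence displayed in~\eqref{NegRegMat}.

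If one wishes to avoid Proposition~\ref{NegRegTheorem} altogether and obtain a self-contained matrix proof, the natural alternative is induction on $m$. The base case $M^+(a_1,a_2) = -M(a_1+1,\underbrace{2,\ldots,2}_{a_2},1,1)$ is a direct computation, and the inductive step factors $M^+(a_1,\ldots,a_{2m}) = M^+(a_1,\ldots,a_{2m-2})\,M^+(a_{2m-1},a_{2m})$ and uses the auxiliary identity $M(x,1,1,y) = -M(x+y-1)$ (again an elementary computation) to collapse the junction $\ldots,2,1,1,a_{2m-1}+1,\ldots$ that appears at the seam of the two pieces. The main bookkeeping obstacle is the edge case $a_{2m}=1$, where the trailing block of $2$'s in the Hirzebruch sequence is empty; the same prescription (``add one further $2$, then $1,1$'') nevertheless produces the correct tail $\ldots,a_{2m-1}+2,2,1,1$, so the induction goes through uniformly.
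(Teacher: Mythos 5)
Your main route has a circularity problem. You reduce \eqref{NegRegMat} to Proposition~\ref{NegRegTheorem} \emph{together with} the explicit conversion formula \eqref{HZRegEqt} (you need the latter to know that the sequence $(c_1,\ldots,c_k)$ appearing in $M^+(a_1,\ldots,a_{2m})=M(c_1,\ldots,c_k)\,R$ is precisely $\big(a_1+1,\underbrace{2,\ldots,2}_{a_2-1},\ldots,a_{2m-1}+2,\underbrace{2,\ldots,2}_{a_{2m}-1}\big)$, so that appending $2,1,1$ produces the displayed tail). But in this paper that pair of statements is exactly what is \emph{deduced from} Proposition~\ref{NegRegProp}: the text immediately after the statement warns that ``\eqref{NegRegMat} is equivalent to \eqref{NegRegMatTheEq} under the assumption that we already know formula \eqref{HZRegEqt}'', and announces the opposite strategy. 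Formula \eqref{HZRegEqt} is never established before Section~\ref{MatHZSec} --- the proof of Fact~\ref{NRCFThm} is explicitly deferred to this very computation, and the earlier proof of Proposition~\ref{NegRegTheorem} rests on the Section~1 ``Facts'' (the labeling of $\mathbb{T}_{r/s}$ by convergents), which in turn rest on Fact~\ref{NRCFThm}. So invoking Proposition~\ref{NegRegTheorem} and \eqref{HZRegEqt} here runs the derivation backwards; your computation $M(2,1,1)=-R$ and the concatenation bookkeeping are correct, but the inputs are not available at this point without importing Series's theorem and the Hirzebruch formula as external black boxes.

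Your fallback is the right proof, and it is essentially the paper's. The paper writes $M^+(a_1,\ldots,a_{2m})=R^{a_1}L^{a_2}\cdots R^{a_{2m-1}}L^{a_{2m}}$ via \eqref{RegMatGenEq}, uses Lemma~\ref{TransMat} ($R^a=-M(a+1,1,1)$, $L^a=-M(1,\underbrace{2,\ldots,2}_{a},1,1)$) together with $M(1,1,1)=-\Id$ to get $R^{a_i}L^{a_{i+1}}=-M(a_i+1,\underbrace{2,\ldots,2}_{a_{i+1}},1,1)$, and then collapses the $m-1$ seams with $M(2,1,1,a+1)=-M(a+2)$ --- the special case $x=2$ of your identity $M(x,1,1,y)=-M(x+y-1)$. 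The sign count is $(-1)^m$ from the $m$ blocks times $(-1)^{m-1}$ from the seams, giving the overall minus sign. Note also that the seam always sits after a \emph{full} block $\underbrace{2,\ldots,2}_{a_{i+1}}$ (nonempty since $a_{i+1}\geq1$), so the edge case you worry about does not actually arise at the junctions; only the final block retains length $a_{2m}$ rather than $a_{2m}-1$, which is exactly what \eqref{NegRegMat} asserts. You should promote this sketch to the main argument and drop the reduction to Proposition~\ref{NegRegTheorem}.
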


Let us stress that~\eqref{NegRegMat} is equivalent to~\eqref{NegRegMatTheEq}
under the assumption that we already know formula~\eqref{HZRegEqt}.
However, our strategy is different, we use~\eqref{NegRegMat} to prove~\eqref{HZRegEqt}.
 
We will need the following lemma.

\begin{lem}
\label{TransMat}
One has
$R^a=-M(a+1,1,1)$ and $L^a=-M(1,\underbrace{2,\ldots,2}_{a},1,1).$
\end{lem}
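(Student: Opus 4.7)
The plan is to translate both identities into statements about words in the generators $R$ and $S$ of $\SL(2,\Z)$ via Proposition~\ref{MaTCompProp}, which gives $M(c_1,\ldots,c_n)=R^{c_1}SR^{c_2}S\cdots R^{c_n}S$. Beyond this rewrite, the only tools required are the two classical relations $S^{2}=-\Id$ and $(RS)^{3}=-\Id$ in $\SL(2,\Z)$.

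For the first identity, expansion immediately gives
\[
M(a+1,1,1) \;=\; R^{a+1}\cdot (SRSRS).
\]
Since $RSRSRS=(RS)^{3}=-\Id$, left-multiplying by $R^{-1}$ yields $SRSRS=-R^{-1}$, whence $M(a+1,1,1)=-R^{a}$. This takes one line.

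For the second identity, expansion gives
\[
M(1,\underbrace{2,\ldots,2}_{a},1,1) \;=\; RS\cdot (R^{2}S)^{a}\cdot (RS)^{2}.
\]
The central observation is the ``swap'' identity
\[
RS\cdot R^{2}S \;=\; L\cdot RS,
\]
which is a one-line $2\times 2$ verification. Applying it repeatedly (a straightforward induction on $a$) pushes all $a$ copies of $R^{2}S$ through the leading factor $RS$, each one converting into an $L$, so that $RS\cdot (R^{2}S)^{a}=L^{a}\cdot RS$. Substituting back gives
\[
M(1,\underbrace{2,\ldots,2}_{a},1,1) \;=\; L^{a}\cdot (RS)^{3} \;=\; -L^{a},
\]
which is the second identity.

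I do not foresee any real obstacle: the whole argument is elementary matrix manipulation, once Proposition~\ref{MaTCompProp} and the two standard relations in $\SL(2,\Z)$ are brought to bear. The only mildly clever step is spotting the commutation $RS\cdot R^{2}S=L\cdot RS$, and that is precisely what makes the induction telescope into one clean line.
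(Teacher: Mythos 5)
Your proof is correct, but it takes a genuinely different route from the paper's. The paper argues by raw matrix computation: the first identity is checked directly, and for the second it first establishes by induction the closed form
\[
\left(
\begin{array}{cc}
2&-1\\
1&0
\end{array}
\right)^{a}=
\left(
\begin{array}{cc}
a+1&-a\\
a&-(a-1)
\end{array}
\right),
\]
and then multiplies out $M(1,2,\ldots,2,1,1)$ explicitly. You instead work at the level of words in the generators via Proposition~\ref{MaTCompProp}, using only $(RS)^{3}=-\Id$ together with the commutation $RS\cdot R^{2}S=L\cdot RS$ (which I checked: both sides equal $\left(\begin{array}{cc}1&-1\\2&-1\end{array}\right)$). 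Your telescoping $RS\,(R^{2}S)^{a}=L^{a}\,RS$ replaces the paper's explicit computation of the $a$-th power, and the final factor $(RS)^{3}=-\Id$ produces the sign in both identities from a single source. Each approach has its virtue: the paper's direct computation is self-contained and requires no cleverness, while yours is more structural and makes transparent \emph{why} the minus sign appears (it is exactly the relation $(RS)^3=-\Id$ in $\SL(2,\Z)$), which fits naturally with the group-theoretic perspective developed later in Section~\ref{DecSec}.
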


\begin{proof}
With a direct computation one easily obtains $M(a+1,1,1)=-R^{a}$.
For the second formula we use the following preliminary result that is easily obtained
by induction
$$
\left(
\begin{array}{cc}
2&-1\\[4pt]
1&0
\end{array}
\right)^{a}=
\left(
\begin{array}{cc}
a+1&-a\\[4pt]
a&-(a-1)
\end{array}
\right).
$$
Then a direct computation leads to 
$M(1,\underbrace{2,\ldots,2}_{a},1,1)=-L^{a}$.
Hence the lemma.
\end{proof}

{\it Proof of Proposition~\ref{NegRegProp}.}
Since $M(1,1,1)=-\Id$, one gets from Lemma~\ref{TransMat}
$$
R^{a_i}L^{a_{i+1}}=-M(a_i+1,\underbrace{2,\ldots,2}_{a_{i+1}},1,1).
$$
Formula~(\ref{NegRegMat}) then follows from~(\ref{RegMatGenEq}) and the simple relation $M(2,1,1,a+1)=-M(a+2)$.

Proposition~\ref{NegRegProp} is proved.
\\

Finally, we observe that
the last three coefficients in~(\ref{NegRegMat}) are~$(2,1,1)$, and can be removed using
the equality $M(2,1,1)=-R$. So that one gets
\begin{equation}
\label{HZCorEq}
M^+(a_1,\ldots,a_{2m})=
M\big(a_1+1,\underbrace{2,\ldots,2}_{a_2-1},\,
a_3+2,\underbrace{2,\ldots,2}_{a_4-1},\ldots,
a_{2m-1}+2,\underbrace{2,\ldots,2}_{a_{2m}-1}\big)R.
\end{equation}
According to Proposition \ref{MatConv} the first column of the matrices from the right-hand-side and from the left-hand-side 
gives the rational ~$\frac{r}{s}$. Therefore, this establishes formula~\eqref{HZRegEqt} and
 the relation~\eqref{NegRegMatTheEq}.

\subsection{Converting the conjugacy classes in $\PSL(2,\Z)$}\label{HZCCSec}

We obtain it as a corollary of Proposition~\ref{NegRegProp}.

\begin{cor}
\label{HZCor}
The matrix $M^+(a_1,\ldots,a_{2m})$ is conjugacy equivalent
to the matrix
$$
M\big(a_1+2,\underbrace{2,\ldots,2}_{a_2-1},\,
a_3+2,\underbrace{2,\ldots,2}_{a_4-1},\ldots,
a_{2m-1}+2,\underbrace{2,\ldots,2}_{a_{2m}-1}\big).
$$
\end{cor}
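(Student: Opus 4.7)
The plan is to deduce this directly from equation~\eqref{HZCorEq}, which already provides the bulk of the work. From Proposition~\ref{NegRegProp} I would rewrite \eqref{HZCorEq} as
$$
M^+(a_1,\ldots,a_{2m}) \;=\; M(a_1+1,\,c_2,\ldots,c_k)\,R,
$$
where $(c_2,\ldots,c_k) := (\underbrace{2,\ldots,2}_{a_2-1},\,a_3+2,\ldots,a_{2m-1}+2,\,\underbrace{2,\ldots,2}_{a_{2m}-1})$ denotes the ``middle'' block of coefficients that is common to the two sequences in the statement.

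The key observation is that, since $M(c)=R^{c}S$ (as used in the proof of Proposition~\ref{MaTCompProp}), one has $M(c+1) = R\cdot M(c)$, and therefore, multiplying by the subsequent factors,
$$
M(a_1+2,\,c_2,\ldots,c_k) \;=\; R\cdot M(a_1+1,\,c_2,\ldots,c_k).
$$
Substituting this into the previous identity I would obtain
$$
M^+(a_1,\ldots,a_{2m}) \;=\; R^{-1}\, M(a_1+2,\,c_2,\ldots,c_k)\, R,
$$
which exhibits $M^+(a_1,\ldots,a_{2m})$ as conjugate to the matrix named in the statement.

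There is no real obstacle here: the corollary is a one-line consequence of Proposition~\ref{NegRegProp} once one notices that the trailing factor $R$ in \eqref{HZCorEq} can be moved to the left by a cyclic conjugation, where it merges with $R^{a_1+1}S$ to become $R^{a_1+2}S = M(a_1+2)$. The only mildly technical point is the degenerate case when some $a_i$ equals $1$, so that the block $\underbrace{2,\ldots,2}_{a_i-1}$ is empty; since this is already handled in Proposition~\ref{NegRegProp}, nothing extra needs to be checked here.
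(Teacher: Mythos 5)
Your proof is correct and is essentially the paper's own argument: the paper also deduces the corollary from~\eqref{HZCorEq} by conjugating with $R$, which is exactly your observation that the trailing $R$ can be cycled to the front where it merges with $R^{a_1+1}S$ to give $M(a_1+2,\ldots)$. The extra detail you supply (the identity $M(c+1)=R\,M(c)$ and the remark about empty blocks of $2$'s) is a faithful expansion of the paper's one-line proof.
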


\begin{proof}
This statement immediately follows
from~\eqref{HZCorEq} using conjugation by~$R$.
\end{proof}

The integers 
\begin{equation}
\label{ciCCSL}
(c_1,\ldots,c_k)=
\big(a_1+2,\underbrace{2,\ldots,2}_{a_2-1},\,
a_3+2,\underbrace{2,\ldots,2}_{a_4-1},\ldots,
a_{2m-1}+2,\underbrace{2,\ldots,2}_{a_{2m}-1}\big)
 \end{equation}
appearing in the above formula were used to describe
the conjugacy classes of $\PSL(2,\Z)$; see~\cite[p.91]{Zag} and provide interesting characteristics of
the quadratic irrationalities.

\begin{ex}
\label{ExT75BisBis}
Let us go back to Example~\ref{ExT75Bis} that treats the case of
the rational $\frac{r}{s}=\frac{7}{5}$.
Applying~(\ref{HZCorEq}), we get that $M^+(1,2,1,1)$ is conjugacy equivalent to
$$
M(3,2,3)=
\left(
\begin{array}{cc}
12&-5\\[4pt]
5&-2
\end{array}
\right).
$$
\end{ex}

\subsection{Appearance of the equation $M(c_1,\ldots,c_n)=-\Id$}\label{CoCoCFSec}
It turns out that, taking into account all the coefficients
$(c_1,\ldots,c_n)$ of the triangulation~$\mathbb{T}_{r/s}$ 
(and not only $(c_1,\ldots,c_k)$ as we did before), one obtains
the negative of the identity matrix.
The following statement can be found in~\cite{BR}.

\begin{prop}
\label{CoCoFac}
One has $M(c_1,\ldots,c_n)=-\Id.$
\end{prop}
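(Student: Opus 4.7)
The approach is induction on $n$, using an elementary local identity in $\SL(2,\Z)$ that mirrors the classical Conway--Coxeter operation of clipping off an ear of the triangulation.

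The key algebraic ingredient is the identity
\[
M(a,1,b)=M(a-1,b-1),\qquad a,b\in\mathbb{Z},
\]
which is immediate from a direct $2\times 2$ computation: both sides equal $\bigl(\begin{smallmatrix}ab-a-b & 1-a\\ b-1 & -1\end{smallmatrix}\bigr)$. Inserted into the product~\eqref{SLEq}, it yields the reduction move
\[
M(c_1,\ldots,c_n)=M(c_1,\ldots,c_{i-1}-1,\,c_{i+1}-1,\ldots,c_n)
\]
whenever $c_i=1$ with $2\le i\le n-1$. Moreover, since $M(c_2,\ldots,c_n,c_1)=M(c_1)^{-1}M(c_1,\ldots,c_n)M(c_1)$ and $-\Id$ is central in $\SL(2,\Z)$, the equality $M(c_1,\ldots,c_n)=-\Id$ is invariant under cyclic permutations of $(c_1,\ldots,c_n)$, so the reduction is in fact available whenever \emph{any} $c_i$ equals~$1$.

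I would then proceed by induction on $n\ge 3$. The base case $n=3$ is the single triangle, forcing the quiddity $(c_1,c_2,c_3)=(1,1,1)$, and one checks directly that $\bigl(\begin{smallmatrix}1 & -1 \\ 1 & 0\end{smallmatrix}\bigr)^3=-\Id$. For the inductive step $n\ge 4$, the triangulation $\mathbb{T}_{r/s}$ always has the two exterior vertices $0$ and $k+1$ as ears, so in particular $c_0=c_{k+1}=1$. In any triangulated $n$-gon with $n\ge 4$, two adjacent vertices cannot simultaneously be ears (that would force a pair of crossing diagonals), so an entry $c_i=1$ at an ear is always flanked by $c_{i-1},c_{i+1}\ge 2$. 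Applying the local reduction at such an $i$ produces the quiddity sequence of the $(n-1)$-gon triangulation obtained by clipping the ear at vertex $i$, still with all entries $\ge 1$, and the inductive hypothesis yields $-\Id$.

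The main subtlety is matching the algebraic reduction with the geometric operation of removing an ear: the new quiddity must be the old one with $c_i$ deleted and the two neighbouring entries each decreased by $1$. This is a direct count (each neighbour loses exactly one incident triangle, and all other vertex counts are preserved), but it is what licenses the induction to stay within the class of quiddity sequences of triangulations. Once this correspondence is in place, the local identity $M(a,1,b)=M(a-1,b-1)$ becomes the algebraic engine of the argument, and the induction proves the more general statement that $M$ of any quiddity sequence of a triangulated polygon equals $-\Id$.
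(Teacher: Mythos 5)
Your proof is correct, but it takes a genuinely different route from the paper's. The paper exploits the special structure of $\mathbb{T}_{r/s}$: it splits the product at the two exterior vertices as $M(c_1,\ldots,c_n)=M(c_1,\ldots,c_k)\,M(1)\,M(c_{k+2},\ldots,c_{n-1})\,M(1)$, converts each block to a regular-continued-fraction matrix via $M(c_1,\ldots,c_k)=M^+(a_1,\ldots,a_{2m})R^{-1}$ together with the rotation formula $\llbracket c_{k+2},\ldots,c_{n-1}\rrbracket=[a_{2m},\ldots,a_1]$, observes $R^{-1}M(1)=S$ and $M^+(a_{2m},\ldots,a_1)=M^+(a_1,\ldots,a_{2m})^t$, and concludes from the identity $ASA^tS=-\Id$ valid for every $A\in\SL(2,\Z)$. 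You instead run the ear-clipping induction on $n$ driven by the local identity $M(a,1,b)=M(a-1,b-1)$ --- which is exactly the ``local surgery'' computation the paper records later, in Section~3.1, as an ingredient of its sketch of the Conway--Coxeter theorem. Your argument is self-contained and proves more (that the quiddity of an \emph{arbitrary} triangulated $n$-gon satisfies $M(c_1,\ldots,c_n)=-\Id$, i.e., Part~(i) of Theorem~\ref{CoCoThm}, of which the proposition is the special case of triangulations with two exterior triangles); the price is the combinatorial bookkeeping you correctly identify --- that every triangulation with $n\geq4$ has an ear, that adjacent ears are impossible so the reduced entries stay $\geq1$, and that the algebraic reduction matches the geometric clipping. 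Note that since your induction must range over all triangulations (clipping an ear of $\mathbb{T}_{r/s}$ need not produce another triangulation with exactly two exterior triangles), you should invoke the existence of an ear for an arbitrary triangulation in the inductive step, not just for $\mathbb{T}_{r/s}$; this is standard (a triangulation is a tree of triangles with at least two leaves) but worth stating. The paper's route, by contrast, is a short closed-form computation that reuses the conversion and mirror machinery already established and isolates the conceptual source of the sign in the transpose identity $ASA^tS=-\Id$.
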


\begin{proof}
Rewrite
$$
M(c_1,\ldots,c_n)=
M(c_1,\ldots,c_k)\,M(1)\,
M(c_{k+2},\ldots,c_{n-1})\, M(1),
$$
then~\eqref{HZCorEq} together with Corollary~\ref{CorRot} and Proposition~\ref{NegRegTheorem} imply
$$
\begin{array}{rcl}
M(c_1,\ldots,c_n)&=&
M^+(a_1,\ldots,a_{2m})\,R^{-1}M(1)\,M^+(a_{2m},\ldots,a_{1})\,R^{-1}M(1)\\[4pt]
&=&
M^+(a_1,\ldots,a_{2m})\,S\,M^+(a_{2m},\ldots,a_{1})\,S,
\end{array}
$$
where $S$ is as in~\eqref{RLSEq}.
Since $M^+(a_{2m},\ldots,a_{1})={M^+(a_1,\ldots,a_{2m})}^t$,
we conclude using the fact that
$ASA^tS=-\Id$ for all $A~\in\SL(2,\Z)$.
\end{proof}

Every rational number~$\frac{r}{s}$ thus corresponds to a solution of the
equation $M(c_1,\ldots,c_n)=-\Id$.
This equation will be important in the sequel for two reasons.

Firstly, the equation $M(c_1,\ldots,c_n)=-\Id$ makes sense and remains an interesting equation
in general, when there is no particular rational number and the corresponding continued fraction.
Expanding a rational in a continued fraction
$\frac{r}{s}=\llbracket{}c_1,\ldots,c_k\rrbracket$,
we always assumed $c_i\geq2$.
This assumption makes the expansion unique.
Allowing $c_i=1$ for some $i$, one faces two difficulties:
the expansion is no more unique (there is an infinite number of them),
and furthermore, the continued fraction may not be well-defined
(the denominators may vanish).
It turns out that considering the matrices $M(c_1,\ldots,c_k)$
with $c_i\geq1$ removes these difficulties.

Secondly, we will study the presentation of elements of the group
$\SL(2,\Z)$ (and $\PSL(2,\Z)$) in the form $A=M(c_1,\ldots,c_n)$
for some positive integers $c_i$.
Therefore, it will be important to know
the relations leading to different presentations of the same element.

\subsection{The semigroup $\Gamma$}

The matrices $M^+(a_1,\ldots,a_{2m})$ of regular continued fractions do not represent
arbitrary elements of~$\PSL(2,\Z)$.

\begin{defn}
{\rm
The semigroup $\Gamma\subset\SL(2,\Z)$ consists of the elements
$M^+(a_1,\ldots,a_{2m})$
where~$a_i$ are positive integers.
}
\end{defn}

As mentioned in Proposition~\ref{MaTCompProp}, $\Gamma$ is generated by the matrices~$R$ and~$L$.
It consists of the matrices with positive entries satisfying the following conditions:
$$
 \Gamma=
\left\{ \begin{pmatrix}
a&b\\
c&d
\end{pmatrix}
\in \SL_2(\Z)
\right.
\left|
\begin{array}{l}
a\geq{}b\geq{}d>0,\\[2pt]
a\geq{}c\geq{}d>0
\end{array}
\right\}.
$$

The semigroup $\Gamma$ is the main character of a wealth of different problems
of number theory, dynamics, combinatorics, etc.
It was studied by many authors from different viewpoints;
see~\cite{Aig,McM,Boc,BK}
and references therein.

This is one of the motivations for a systematic study of
the matrices $M(c_1,\ldots,c_n)$ which is one of the main subjects of this paper.

\section{Solving the equation $M(c_1,\ldots,c_n)=\pm\Id$}\label{CoCoIntSec}

In this section we describe all positive integer solution of the two equations
$$
M(c_1,\ldots,c_n)=-\Id,
\qquad\hbox{and}\qquad
M(c_1,\ldots,c_n)=\Id,
$$
for the matrices~\eqref{SLEq}.
Recall that matrices $M(c_1,\ldots,c_n)$ with $c_i\geq1$, satisfying $M(c_1,\ldots,c_n)=-\Id$,
arose from continued fractions, see Section~\ref{CoCoCFSec}.
The equation $M(c_1,\ldots,c_n)=\Id$ is quite different but also relevant.

One motivation for considering solutions with arbitrary positive integers $c_i\geq1$
is related to the observation that positive integers usually count interesting combinatorial objects.
The solutions we classify in this section are given in terms of polygon dissections:
triangulations and also more general ``$3d$-dissections''  of $n$-gons.
Another motivation is to extend most of the results 
and ideas of Section~\ref{TriangCFSec} from continued fractions
to arbitrary solutions of the equation $M(c_1,\ldots,c_n)=\pm\Id$.
Our third motivation is related to a
more general study (see Section~\ref{DecSec}) of decomposition of an arbitrary element $A\in\PSL(2,\Z)$ in the form
$A=M(c_1,\ldots,c_n)$.
Solutions of the above equations describe relations in such a decomposition. 

Let us also mention that
equation $M(c_1,\ldots,c_n)=-\Id$ considered over~$\C$ defines an interesting algebraic variety
closely related to the classical moduli space~$\cM_{0,n}$ of configurations of points in the projective line.
Therefore, positive integer solutions of
this equation correspond to a class of rational points of~$\cM_{0,n}$; see~\cite{SVRS}.
We do not consider geometric applications in the present paper.

\subsection{Conway and Coxeter totally positive solutions}\label{QuidSSec}
A classical theorem of Conway and Coxeter~\cite{CoCo} describes 
a particular class of solutions of the equation
\begin{equation}
\label{CoCEqn}
M(c_1,\ldots,c_n)=-\Id.
\end{equation}
More importantly, this theorem
relates this equation to combinatorics.

The following notion is the most important ingredient of the theory.

\begin{defn}
\label{QuidDef}
{\rm
(a)
Given a triangulation of a convex $n$-gon by non-crossing diagonals,
its {\it quiddity} is the
(cyclically ordered) $n$-tuple of positive integers, $(c_1,\ldots,c_n)$,
counting the number of triangles adjacent to the vertices.

(b)
Given an $n$-tuple of positive integers, $(c_1,\ldots,c_n)$,
we consider the following sequence of rational numbers,
or infinity:
$$
\frac{r_i}{s_i}:=
\llbracket{}c_1,\ldots,c_i\rrbracket,
$$
for $1\leq{}i\leq{}n$.}

\end{defn}

For example, the coefficients $c_i$ of a negative continued fraction
of a rational number~$\frac{r}{s}$ is
a part of the quiddity of the triangulation~$\mathbb{T}_{r/s}$,
and the rationals~$\frac{r_i}{s_i}$ are its convergents;
see Section~\ref{CombIntCFSec}.
Of course, for continued fractions, the denominator of~$\frac{r_i}{s_i}$
cannot vanish.

\begin{defn}
\label{TPSDef}
The class of solutions of~\eqref{CoCEqn} satisfying the condition
\begin{equation}
\label{ToPoEq}
\frac{r_i}{s_i}>0,
\end{equation} 
for all $i\leq{}n-3$,
will be called {\it totally positive}.
\end{defn}

We will see in Section~\ref{MoyTSSec} that the above condition of total positivity is equivalent to the assumption that
$c_1+c_2+\cdots+c_n=3n-6$.

The Conway and Coxeter theorem~\cite{CoCo}
establishes a one-to-one correspondence between totally positive
solutions of~(\ref{CoCEqn}) and triangulations of the $n$-gon, via the notion of quiddity that uniquely determines the triangulation.

\begin{thm}[\cite{CoCo}]
\label{CoCoThm}
(i) 
The quiddity of a  triangulated $n$-gon is a totally positive solution of~(\ref{CoCEqn}).

(ii)
A totally positive solution of~(\ref{CoCEqn}) is the quiddity of
a triangulated $n$-gon.
\end{thm}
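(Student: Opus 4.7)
The plan is to prove both parts by induction on $n$, driven by the elementary matrix identity $M(a,1,b) = M(a-1,b-1)$ (an immediate calculation from the definition), which is the algebraic counterpart of the combinatorial operation of removing an ear triangle. The base case $n = 3$ is handled by the direct computation $M(1,1,1) = -\Id$, with total positivity vacuous since the condition $i \leq n-3$ is empty.

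For the inductive step of (i), any triangulated $n$-gon with $n \geq 4$ has at least two ear vertices, that is, vertices incident to exactly one triangle, hence with $c_i = 1$. Choosing such a vertex and erasing its triangle yields a triangulated $(n-1)$-gon whose quiddity is obtained from $(c_1,\ldots,c_n)$ by deleting $c_i$ and subtracting $1$ from each of the two cyclic neighbors $c_{i-1},c_{i+1}$. The inductive hypothesis gives that the matrix product for the smaller triangulation equals $-\Id$, and $M(a,1,b)=M(a-1,b-1)$ lifts this to $M(c_1,\ldots,c_n)=-\Id$. Total positivity follows by identifying the partial convergents $r_j/s_j = \llbracket c_1,\ldots,c_j\rrbracket$ with the sequence of positive rationals produced by the Farey labelling rule of Section~\ref{FSLabSec} on an initial fan of $j$ triangles inside the triangulation, in the spirit of Fact~\ref{CaroThm}.

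For part (ii), starting from a totally positive solution $(c_1,\ldots,c_n)$, two combinatorial facts must be extracted purely from the matrix relation. First, some $c_i$ equals $1$: if every $c_i \geq 2$, the negative continued fraction $\llbracket c_1,\ldots,c_n\rrbracket$ is a well defined positive rational $r/s$, and Proposition~\ref{MatConv} forces the $(2,1)$ entry of $M(c_1,\ldots,c_n)$ to be the positive integer $s$, contradicting the matrix $-\Id$. Second, some such matrix ear has both neighbors at least $2$: since each factor has the form $R^{c_i}S$, multiplying the relation on the left by $S^{-1}R^{-c_1}$ and on the right by $R^{c_1}S$ produces the same relation with the sequence cyclically shifted, so one may rotate so that $c_1=c_2=1$, whereupon a short computation with $M(1,1)$ forces $M(c_3,\ldots,c_n)=M(1)$ and hence $n=3$. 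Having located a good ear at position $i$, one applies $M(a,1,b)=M(a-1,b-1)$ to obtain an $(n-1)$-tuple of positive integers satisfying $M=-\Id$; the inductive hypothesis supplies a triangulated $(n-1)$-gon with that quiddity, and inserting the ear triangle across the diagonal from $i-1$ to $i+1$ produces the required triangulated $n$-gon.

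The main obstacle in part (ii) is verifying that the reduced $(n-1)$-tuple is still \emph{totally positive}, which is what permits the inductive hypothesis to be applied. Removing an ear at position $i$ changes $c_{i-1}$ to $c_{i-1}-1$ and shifts all subsequent indices, so the new partial convergents $\llbracket c_1,\ldots,c_{i-1}-1,c_{i+1}-1,\ldots\rrbracket$ are not visibly positive from the old data. The cleanest resolution, rather than an ad hoc continued fraction manipulation, is to pass through the Farey graph of Section~\ref{FarSec}: a totally positive solution produces a sequence of Farey-adjacent rationals labelling vertices of an $n$-gon inscribed in the Farey tessellation, and erasing a good ear preserves this inscription, so positivity of the new convergents is automatic. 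Making this Farey dictionary precise, together with the equivalence between total positivity and the linear condition $c_1+\cdots+c_n=3n-6$ promised in Section~\ref{MoyTSSec}, is where the real technical care of the argument is required.
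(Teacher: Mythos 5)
Your overall strategy --- induction on $n$ driven by the ear identity $M(a,1,b)=M(a-1,b-1)$, with base case $M(1,1,1)=-\Id$ --- is exactly the route the paper sketches (its three ``observations'' and the surgery~\eqref{FirstSurM}). However, one step of your part~(ii) is concretely false as written: from $c_1=c_2=1$ you deduce $M(c_3,\ldots,c_n)=M(1)$ and then assert ``hence $n=3$''. The equation $M(d_1,\ldots,d_m)=M(1)$ has positive integer solutions of arbitrary length: since $M(1,1)=-M(1)^{-1}$, every solution of $M(d_1,\ldots,d_m,1,1)=-\Id$ produces one, for instance $M(1,1,2,1,1,1,1,2)=M(1)$, obtained from the second $3d$-dissection of Example~\ref{SimpExQuid}. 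So the matrix relation alone cannot exclude adjacent $1$'s; total positivity must enter at exactly this point, and that is what the paper's second observation supplies: if $(c_i,c_{i+1})=(1,1)$ then the recurrence $v_{j+1}=c_jv_j-v_{j-1}$ gives $v_{i+2}=-v_{i-1}$ for the numerators and denominators of the convergents, which is incompatible with~\eqref{ToPoEq}. Your rotation step has the further defect that~\eqref{ToPoEq} is anchored at the index $1$, so it is not obviously inherited by a cyclic shift; the paper only obtains rotation invariance afterwards, via the equivalence of total positivity with $c_1+\cdots+c_n=3n-6$, which you may not assume while proving the theorem.

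The point you defer --- that the reduced $(n-1)$-tuple is still totally positive --- does not actually require the Farey machinery you invoke (which would risk circularity, since Corollary~\ref{CoCoFacCor} and Theorem~\ref{FarFac} sit downstream of the present theorem). A direct computation with the recurrence shows that when $c_i=1$ the convergents of $(c_1,\ldots,c_{i-1}-1,\,c_{i+1}-1,\ldots,c_n)$ are precisely the old convergents with the $(i-1)$-st term deleted: the new $(i-1)$-st numerator is $(c_{i-1}-1)r_{i-2}-r_{i-3}=r_{i-1}-r_{i-2}=r_i$, and the identity $r_i=r_{i-1}-r_{i-2}$ propagates the matching of all later terms. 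Positivity is therefore inherited term by term. With that substitution, and with the adjacent-$1$'s exclusion repaired as above, your induction closes and coincides with the paper's argument.
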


We do not dwell on the detailed proof of this classical result.
For a simple complete proof of Theorem~\ref{CoCoThm} see~\cite{BR,Hen},
and also~\cite{Ovs}.
The idea of the proof consists of three observations.

1)
An $n$-tuple of integers $(c_1,\ldots,c_n)$ satisfying~(\ref{CoCEqn}) 
must contain $c_i=1$ for some $i$.
Otherwise, for any sequence of integers $(v_i)_{i\in\Z}$
satisfying the linear recurrence~$v_{i+1}=c_iv_i-v_{i-1}$, with the initial conditions
$(v_0,v_1)=(0,1)$, one has: $v_{i+1}>v_i$.
Therefore, the sequence~$(v_i)_{i\in\Z}$ cannot be periodic.
This contradicts the equation~$M(c_1,\ldots,c_n)\left(\begin{array}{c}1\\0\end{array}\right)
=\pm\left(\begin{array}{c}1\\0\end{array}\right)$.

2)
The total positivity condition~\eqref{ToPoEq} implies that,
whenever $c_i=1$ for some $i$, the two neighbors $c_{i-1},c_{i+1}$
must be greater or equal to~$2$.
Indeed, for two consecutive~$1$'s, if~$(c_i,c_{i+1})=(1,1)$, one has
$v_{i+2}=v_{i+1}-v_i=v_i-v_{i-1}-v_i=-v_{i-1}$.

3)
The ``local surgery'' operation 
\begin{equation}
\label{FirstSurM}
(c_1,\ldots,c_{i-1},\,1,c_{i+1},\ldots,c_n)
\to(c_1,\ldots,c_{i-1}-1,\,c_{i+1}-1,\ldots,c_n)
\end{equation}
is then well-defined.
It decreases $n$ by $1$, and does not change the matrix~\eqref{SLEq}.

Indeed,
$M(c_1,\ldots,c_{i-1},\,1,c_{i+1},\ldots,c_n)=
M(c_1,\ldots,c_{i-1}-1,\,c_{i+1}-1,\ldots,c_n)$
since
$$
\begin{array}{rcl}
\left(
\begin{array}{cc}
c+1&-1\\[4pt]
1&0
\end{array}
\right)
\left(
\begin{array}{cc}
1&-1\\[4pt]
1&0
\end{array}
\right)
\left(
\begin{array}{cc}
c'+1&-1\\[4pt]
1&0
\end{array}
\right)&=&
\left(
\begin{array}{cc}
cc'-1&-c'\\[4pt]
c&-1
\end{array}
\right)\\[15pt]
&=&
\left(
\begin{array}{cc}
c&-1\\[4pt]
1&0
\end{array}
\right)
\left(
\begin{array}{cc}
c'&-1\\[4pt]
1&0
\end{array}
\right).
\end{array}
$$

One then proceeds by induction on $n$,
the induction step consists of cutting an exterior triangle of a given triangulation, that corresponds
to the operation~(\ref{FirstSurM}) on the quiddity.

\begin{ex}
\label{FirstEx}
The sequence $(2,2,2,5,4,2,2,1,4,2,4,1,3,2,5,1)$ is a solution of~(\ref{CoCEqn})
because it is the quiddity of the following triangulation of a hexadecagon:
$$
\xymatrix @!0 @R=0.40cm @C=0.40cm
 {
&&&&&&5\ar@{-}[dddddddddddd]\ar@{-}[llld]\ar@{-}[rrrrrrddddddddddd]
\ar@{-}[rrrrrrrrddddddddd]\ar@{-}[rrrrrrrrrddddddd]\ar@{-}[rrr]&&&
2\ar@{-}[rrrd]\ar@{-}[rrrrrrddddddd]
\\
&&&4\ar@{-}[lldd]\ar@{-}[lldddddddd]\ar@{-}[rrrddddddddddd]\ar@{-}[dddddddddd]&&&&&&&&& 
2\ar@{-}[rrdd]\ar@{-}[rrrdddddd]\\
\\
&2\ar@{-}[ldd]\ar@{-}[dddddd]&&&&&&&&&&&&&2\ar@{-}[rdd]\ar@{-}[rdddd]\\
\\
2\ar@{-}[dd]\ar@{-}[rdddd]&&&&&&&&&&&&&&&1\ar@{-}[dd]\\
\\
1\ar@{-}[rdd]&&&&&&&&&&&&&&&5\ar@{-}[ldd]\\
\\
&4\ar@{-}[rrdd]&&&&&&&&&&&&&2\ar@{-}[lldd]\\
\\
&&&2\ar@{-}[rrrd]&&&&&&&&& 3\ar@{-}[llld]\\
&&&&&&4\ar@{-}[rrr]\ar@{-}[rrrrrru]&&&1
}
$$
\end{ex}

\begin{rem}
a)
The meaning of total positivity will be explained in
Sections~\ref{ConstrWalkFSec} and~\ref{PtoSec}.

b)
The Conway and Coxeter theorem is initially formulated in terms of {\it frieze patterns}.
This notion, due to Coxeter~\cite{Cox}, became popular mainly because its relations
to cluster algebras; see~\cite{CaCh}.
Frieze patterns also play an important role in such areas as
quiver representations, differential geometry, discrete integrable systems
(for a survey; see~\cite{Sop}).
\end{rem}

\subsection{The complete set of solutions: $3d$-dissections}\label{MoySec}
It turns out that, to classify all the solutions
(with no total positivity condition), it is natural to solve simultaneously
the equations 
$$
M(c_1,\ldots,c_n)=-\Id
\qquad\hbox{and}\qquad
M(c_1,\ldots,c_n)=\Id.
$$
This classification led to the following combinatorial notion.

\begin{defn}
\label{3dDef}
{\rm
(i)
A $3d$-{\it dissection} is a partition of a convex $n$-gon
into sub-polygons by means of pairwise non-crossing diagonals, 
such that the number of vertices of every sub-polygon is a multiple of $3$.

(ii)
The quiddity of a $3d$-dissection of an $n$-gon is defined, similarly to the case of a triangulation,
as a cyclically ordered sequence $(c_1,\ldots,c_n)$ of positive integers counting sub-polygons adjacent to every vertex.}

\end{defn}

The following theorem was proved in~\cite{Ovs}.

\begin{thm}[\cite{Ovs}]
\label{SecondMainThm}
(i)
The quiddity of a $3d$-dissection of an $n$-gon satisfies
$M(c_1,\ldots,c_n)=\pm\Id$.

(ii)
Conversely,
every solution of the equation $M(c_1,\ldots,c_n)=\pm\Id$ with positive integers~$c_i$ is the quiddity of 
a $3d$-dissection of an $n$-gon.
\end{thm}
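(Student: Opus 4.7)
The plan is to prove both directions by induction, using two key algebraic identities: the Conway--Coxeter surgery $M(c,1,c')=M(c-1,c'-1)$ and the relation $M(1)^{3}=-\Id$. Combining these yields the master identity
\[
M(c,\underbrace{1,\ldots,1}_{3k-2},c')=(-1)^{k-1}\,M(c-1,c'-1),\qquad k\geq 1,
\]
which is the algebraic translation of cutting off a $3k$-gon ear from a $3d$-dissection.

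For direction (i) I would induct on the number $N$ of sub-polygons. In the base case $N=1$ the whole $n$-gon is a single $3k$-gon, its quiddity is $(1,\ldots,1)$ of length $3k$, and $M=(M(1)^{3})^{k}=(-\Id)^{k}=\pm\Id$. For $N\geq 2$, any polygon dissection has an ear --- a sub-polygon sharing a single diagonal with the complementary dissection, obtained as a leaf of the dual tree. If this ear is a $3k$-gon then its $3k-2$ interior vertices contribute a maximal block of $1$'s to the quiddity, flanked by two entries $c,c'\geq 2$ at the diagonal endpoints. The master identity reduces the original quiddity to that of a $3d$-dissection with $N-1$ pieces, for which the inductive hypothesis gives $M=\pm\Id$; hence the same holds for the original, up to the sign $(-1)^{k-1}$.

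For direction (ii) I would induct on $n$, with small cases checked by hand. The recurrence-periodicity argument reviewed in the paper (the sequence $v_{i+1}=c_{i}v_{i}-v_{i-1}$ with $(v_{0},v_{1})=(0,1)$ cannot be strictly increasing forever) shows that any nontrivial positive solution of $M(c_{1},\ldots,c_{n})=\pm\Id$ contains some $c_{i}=1$. If every entry equals $1$, then $n=3k$ and the single $3k$-gon is the required dissection. Otherwise one seeks a \emph{cuttable ear pattern}: a maximal cyclic block of $1$'s of length $\ell=3k-2$ for some $k\geq 1$, flanked on both sides by entries $\geq 2$. Given such a pattern, the master identity strips it off to yield a strictly shorter positive solution of $M=\pm\Id$; the inductive hypothesis furnishes a $3d$-dissection of the reduced polygon, into which one glues a $3k$-gon ear along the freshly created edge to obtain the desired $3d$-dissection of the original $n$-gon.

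The main obstacle is exhibiting a cuttable ear pattern in every nontrivial solution. This is genuinely subtle because $3d$-dissection quiddities can contain maximal cyclic blocks of $1$'s of length $\not\equiv 1\pmod{3}$; for instance, a $12$-gon dissected as a nonagon plus three triangles admits the quiddity $(2,1,1,2,1,2,1,2,1,2,2,1)$, in which the block $(1,1)$ at positions one and two has length $2$. My strategy is to exploit the companion identities
\[
M(c,\underbrace{1,\ldots,1}_{3k-1},c')=(-1)^{k}\,M(c+c'-1),\qquad M(c,\underbrace{1,\ldots,1}_{3k},c')=(-1)^{k}\,M(c,c'),
\]
together with a minimality argument: among all maximal cyclic blocks of $1$'s bounded on both sides by entries $\geq 2$, choose one of smallest length $\ell$, and rule out $\ell\equiv 0$ or $2\pmod{3}$ by applying the relevant companion identity to produce a strictly shorter positive solution whose induced $3d$-dissection, obtained from the inductive hypothesis, would contain a bounded block of $1$'s strictly shorter than $\ell$, contradicting minimality. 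Once $\ell\equiv 1\pmod{3}$ is established, the inductive step of (ii) closes, and the Conway--Coxeter theorem is recovered as the totally positive case where every ear is a triangle.
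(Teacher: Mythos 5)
Your algebraic identities are all correct, and your argument for part (i) --- induction on the number of sub-polygons, stripping a whole $3k$-gon ear (a leaf of the dual tree) via $M(c,1,\ldots,1,c')=(-1)^{k-1}M(c-1,c'-1)$ --- is sound; it is a repackaging of the paper's surgeries~\eqref{FirstSurM} and~\eqref{SecSurM}, which the paper instead applies one or two entries at a time in an induction on $n$.

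The gap is in part (ii), exactly at the point you flag as the main obstacle. The claim that the maximal cyclic block of $1$'s of \emph{smallest} length must have length $\equiv 1\pmod 3$ is false. Take a central hexagon $ABCDEF$ and glue hexagonal ears onto the sides $AB$ and $CD$: the resulting $14$-gon has quiddity $(2,1,1,1,1,2,2,1,1,1,1,2,1,1)$, which satisfies $M(\cdots)=\Id$ (apply your master identity with $k=2$ to each of the two segments $(2,1,1,1,1,2)$ and finish with $M(1,1,1)^2=\Id$). Its maximal cyclic blocks of $1$'s have lengths $4$, $4$ and $2$ --- the length-$2$ block $(c_{13},c_{14})$ comes from the two exposed vertices $E,F$ of the central hexagon, which is not an ear --- so the minimal length is $2\equiv 2\pmod 3$. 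Independently of this counterexample, the proposed contradiction is not one: stripping a block of length $\equiv 0$ or $2\pmod 3$ genuinely produces a shorter positive solution, and the inductive hypothesis genuinely equips it with a $3d$-dissection; a short block of $1$'s in the \emph{reduced} sequence says nothing about the blocks of the \emph{original} sequence, so minimality of $\ell$ is never contradicted. What is needed is not the exclusion of the residues $0,2\pmod 3$ but a geometric \emph{reconstruction} move for each of them. For a block of length $3k-1$ flanked by $c,c'\geq 2$ (reduction to the single entry $c+c'-1$), one splits the corresponding vertex of the reduced dissection into two vertices of quiddities $c$ and $c'$ and enlarges by $3k$ vertices the incident sub-polygon sitting in position $c$ in the cyclic order of pieces around that vertex. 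For a block of length $3k$ (reduction to $(c,c')$ unchanged), one enlarges by $3k$ vertices the unique sub-polygon lying along the side joining the two reduced vertices. These are precisely the inverses of the paper's surgery~\eqref{SecSurM} iterated $k$ times: its geometric meaning is ``grow a sub-polygon by a multiple of three'', not ``glue an ear''. With these two additional moves your induction on $n$ closes; without them, part (ii) does not go through.
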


Similarly to Theorem~\ref{CoCoThm}, the proof uses induction on $n$.
The idea is as follows. 
Besides the operations~(\ref{FirstSurM}), one needs 
another type of ``local surgery'' operations.
These operations remove two consecutive $1$'s:
\begin{equation}
\label{SecSurM}
(c_1,\ldots,c_{i-1},\,c_i,\,1,\,\,1,\,c_{i+3},\,c_{i+4},\ldots,c_n)
\to(c_1,\ldots,c_{i-1},\,c_i+c_{i+3}-1,\,c_{i+4},\ldots,c_n).
\end{equation}
Such an operation decreases $n$ by $3$ and changes the sign of the matrix
$M(c_1,\ldots,c_n)$.
Indeed, 
$$
\left(
\begin{array}{cc}
c'&-1\\[4pt]
1&0
\end{array}
\right)
\left(
\begin{array}{cc}
1&-1\\[4pt]
1&0
\end{array}
\right)^2
\left(
\begin{array}{cc}
c''&-1\\[4pt]
1&0
\end{array}
\right)\,=\,
\left(
\begin{array}{cc}
1-c'-c''&1\\[4pt]
-1&0
\end{array}
\right).
$$

\begin{ex}
\label{SimpExQuid}
Simple examples of $3d$-dissections different from triangulations are:
$$
\xymatrix @!0 @R=0.35cm @C=0.35cm
{
&&&1\ar@{-}[rrd]\ar@{-}[lld]
\\
&2\ar@{-}[ldd]\ar@{-}[rrrr]&&&& 2\ar@{-}[rdd]&\\
\\
1\ar@{-}[rdd]&&&&&& 1\ar@{-}[ldd]\\
\\
&1\ar@{-}[rrrr]&&&&1
}
\qquad
\xymatrix @!0 @R=0.32cm @C=0.45cm
 {
&&&2\ar@{-}[dddddddd]\ar@{-}[lld]\ar@{-}[rrd]&
\\
&1\ar@{-}[ldd]&&&& 1\ar@{-}[rdd]\\
\\
1\ar@{-}[dd]&&&&&&1\ar@{-}[dd]\\
\\
1\ar@{-}[rdd]&&&&&&1\ar@{-}[ldd]\\
\\
&1\ar@{-}[rrd]&&&& 1\ar@{-}[lld]\\
&&&2&
}
$$
Their quiddities are solutions of~(\ref{SLCoCoEq}).
More precisely, 
$$
M(1,1,2,1,2,1,1)=\Id,
\qquad
M(1,1,2,1,1,1,1,2,1,1)=-\Id.
$$
\end{ex}

\begin{rem}
\label{NoNUnRem}
Theorem~\ref{SecondMainThm}
does not imply a one-to-one correspondence
between solutions of~(\ref{SLCoCoEq}) and $3d$-dissections.
Moreover, such a correspondence does not exist.
Indeed, the quiddity of a $3d$-dissection does not characterize it.
This means that different $3d$-dissections
may correspond to the same quiddity.
For instance, the following different $3d$-dissections of the octagon
$$
\xymatrix @!0 @R=0.30cm @C=0.5cm
 {
&1\ar@{-}[ldd]\ar@{-}[rr]&& 2\ar@{-}[rdd]\\
\\
2\ar@{-}[dd]\ar@{-}[rdddd]&&&&1\ar@{-}[dd]\\
\\
1\ar@{-}[rdd]&&&&2\ar@{-}[ldd]\ar@{-}[luuuu]\\
\\
&2\ar@{-}[rr]&& 1
}
\qquad\qquad
\xymatrix @!0 @R=0.30cm @C=0.5cm
 {
&1\ar@{-}[ldd]\ar@{-}[rr]&& 2\ar@{-}[rdd]\\
\\
2\ar@{-}[dd]\ar@{-}[rrruu]&&&&1\ar@{-}[dd]\\
\\
1\ar@{-}[rdd]&&&&2\ar@{-}[ldd]\ar@{-}[llldd]\\
\\
&2\ar@{-}[rr]&& 1
}
$$
have the same quiddity.
This observation is due to Alexey Klimenko.
\end{rem}

To elucidate the statement of Theorem~\ref{SecondMainThm},
let us separate the cases of $-\Id$ and $\Id$.

\begin{cor}
\label{PlusMinProp}
Given a $3d$-dissection of an $n$-gon, its quiddity~$(c_1,\ldots,c_n)$ satisfies
$M(c_1,\ldots,c_n)=-\Id$ if and only if the number of subpolygons 
with an even number of vertices is even.
\end{cor}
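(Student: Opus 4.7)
\emph{Plan.} I will prove by induction on the number $s$ of subpolygons the sharpened formula $M(c_1,\ldots,c_n) = -(-1)^e\,\Id$, where $e$ is the number of subpolygons with an even number of vertices; the corollary follows immediately. The induction reduces a dissection by removing a leaf subpolygon of its dual tree, and its core is a single algebraic identity.

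\emph{Key identity.} For any integer $k\geq 1$ and positive integers $c,c'$,
\begin{equation*}
M(c,\underbrace{1,\ldots,1}_{3k-2},c') \;=\; (-1)^{k+1}\, M(c-1,c'-1).
\end{equation*}
Indeed, writing the left-hand side as $R^{c}S\,(RS)^{3k-2}\,R^{c'}S$ via Proposition~\ref{MaTCompProp}, using the direct check $(RS)^3=M(1)^3=-\Id$ to reduce $(RS)^{3k-2}$ to $(-1)^{k-1}(RS)$, and then applying the surgery-1 identity $M(c,1,c')=M(c-1,c'-1)$ displayed in the proof sketch of Theorem~\ref{CoCoThm}, yields the identity.

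\emph{Induction.} For $s=1$ the dissection is a single $3k$-gon with $c_1=\cdots=c_n=1$, so $M=M(1)^{3k}=(-1)^k\,\Id$; since $e=1$ if $k$ is even and $e=0$ if $k$ is odd, one has $M=-(-1)^e\,\Id$ in both parities. For $s\geq 2$, the dual tree of the dissection has a leaf subpolygon $P$ with $3k$ vertices ($k\geq 1$), of which $3k-1$ edges lie on the $n$-gon boundary and the remaining edge is the diagonal shared with one neighbor. After a cyclic relabeling (harmless because $\pm\Id$ is central, so $M=\pm\Id$ is invariant under cyclic shifts), $P$ occupies positions $1,\ldots,3k$, giving $c_2=\cdots=c_{3k-1}=1$ (the $3k-2$ interior vertices of $P$ belong to no other subpolygon) together with $c_1,c_{3k}\geq 2$. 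The key identity then produces
\begin{equation*}
M(c_1,\ldots,c_n)=(-1)^{k+1}\, M(c_1-1,\,c_{3k}-1,\,c_{3k+1},\ldots,c_n),
\end{equation*}
and the right-hand matrix corresponds to the $3d$-dissection obtained by removing $P$, which has $s-1$ subpolygons and even-subpolygon count $e'=e-[k\text{ even}]$. The induction hypothesis gives $M_{\mathrm{new}}=-(-1)^{e'}\,\Id$, and the parity identity $k+1+e'\equiv e\pmod 2$ yields $M(c_1,\ldots,c_n)=-(-1)^e\,\Id$, completing the step.

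\emph{Main obstacle.} The substantive point is the sign arithmetic: the prefactor $(-1)^{k+1}$ produced algebraically when the leaf $3k$-gon is contracted must exactly compensate for the parity change of $e$ upon removing that $3k$-gon. This compatibility---valid uniformly in $k$ via the single key identity---is precisely what makes the sign of $M$ detect $e\bmod 2$ rather than merely lie in $\{\pm\Id\}$.
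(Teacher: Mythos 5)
Your proof is correct, and it is a genuinely self-contained argument where the paper offers none: Corollary~\ref{PlusMinProp} is stated as a refinement of Theorem~\ref{SecondMainThm}, and the route the paper implicitly suggests is to combine Proposition~\ref{TSCorProp} with the level structure~\eqref{TotSumDropEq}, i.e., to read the sign off the total sum $c_1+\cdots+c_n=3n-6\sum_k(k-1)N_k-6$ and observe that $\sum_k(k-1)N_k\equiv\sum_{k\,\mathrm{even}}N_k\pmod 2$; alternatively one could track the sign reversals of the two local surgeries \eqref{FirstSurM} and \eqref{SecSurM} through the induction proving Theorem~\ref{SecondMainThm}. You instead contract an entire leaf $3k$-gon of the dual tree in a single step, and your key identity $M(c,\underbrace{1,\ldots,1}_{3k-2},c')=(-1)^{k+1}M(c-1,c'-1)$ --- correctly derived from $(RS)^{3}=-\Id$ together with the paper's surgery $M(c,1,c')=M(c-1,c'-1)$ --- packages all the sign bookkeeping for that leaf at once; the parity check $k+1+e'\equiv e\pmod 2$ then closes the induction uniformly in both parities of $k$, and your base case $M(1,\ldots,1)=(-1)^k\Id$ is right. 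What your approach buys is independence from Proposition~\ref{TSCorProp} (whose proof the paper also omits) and a transparent reason \emph{why} the sign detects $e\bmod 2$; what it costs is having to justify three small structural facts, all of which you do at least implicitly: the dual graph of a dissection is a tree and hence has a leaf, the $3k-2$ non-diagonal vertices of a leaf are consecutive with quiddity $1$ (and the two diagonal endpoints have $c\geq 2$, so the reduced tuple is again a quiddity), and the cyclic relabeling is harmless because a cyclic shift conjugates $M(c_1,\ldots,c_n)$, so establishing that the shifted product equals a specific central element establishes the same for the original.
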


\subsection{The total sum $c_1+\cdots+c_n$}\label{MoyTSSec}
An interesting characteristics of a solution is
the  total sum of the coefficients $c_i$.

Theorem~\ref{SecondMainThm} implies that the value
$c_1+\cdots+c_n=3n-6$ is maximal.
Note that, for a totally positive solution, the sum of $c_i$'s is equal to $3n-6$
which is three times the number of triangles in a triangulation
of an $n$-gon.

\begin{cor}
\label{MaxProp}
Positive integer solutions of  the equation
$M(c_1,\ldots,c_n)=\pm\Id$ always satisfy
$$
c_1+c_2+\cdots+c_n\leq3n-6.
$$
\end{cor}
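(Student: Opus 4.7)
The plan is to use Theorem~\ref{SecondMainThm}(ii) to reduce the statement to a purely combinatorial inequality about $3d$-dissections, and then prove that inequality by a standard double-counting of edges.

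First I would invoke Theorem~\ref{SecondMainThm}(ii) to realize the given positive integer solution $(c_1,\ldots,c_n)$ as the quiddity of a $3d$-dissection of a convex $n$-gon. Let $P_1,\ldots,P_\ell$ denote the sub-polygons of this dissection, and let $n_j$ be the number of vertices of $P_j$ (so each $n_j$ is a multiple of~$3$, in particular $n_j \geq 3$). By the very definition of the quiddity, $c_i$ counts the number of sub-polygons incident to vertex~$i$, so summing over $i$ counts vertex--sub-polygon incidences, giving
$$
c_1+c_2+\cdots+c_n \;=\; \sum_{j=1}^{\ell} n_j.
$$

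Next I would count the edges of the sub-polygons in two ways. If $d$ denotes the number of diagonals of the $n$-gon used in the dissection, then each diagonal is an edge of exactly two sub-polygons and each side of the $n$-gon is an edge of exactly one sub-polygon, so
$$
\sum_{j=1}^{\ell} n_j \;=\; n + 2d.
$$
Combining, $c_1+\cdots+c_n = n + 2d$, so the desired inequality is equivalent to $d \leq n-3$.

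Finally, $d \leq n-3$ is the classical bound on the number of pairwise non-crossing diagonals of a convex $n$-gon, attained by (and only by) triangulations; this follows either from Euler's formula applied to the planar graph obtained from the dissection, or by the standard induction cutting off an ear. Substituting yields $c_1+\cdots+c_n \leq n + 2(n-3) = 3n-6$, which completes the argument. There is no real obstacle here: once Theorem~\ref{SecondMainThm} is in hand, the corollary is a one-line double-counting, and the only thing worth noting is that equality forces every sub-polygon to be a triangle, so the bound $3n-6$ is attained precisely by the Conway--Coxeter (totally positive) solutions.
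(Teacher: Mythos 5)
Your proof is correct, and it follows the same overall route as the paper: both reduce the inequality to a combinatorial statement about $3d$-dissections via Theorem~\ref{SecondMainThm}, Part~(ii). The difference is in how the combinatorial bound is then obtained. The paper does not write out an argument for the corollary at all: it asserts that Theorem~\ref{SecondMainThm} implies the maximality of $3n-6$, and separately records the exact formula $c_1+\cdots+c_n=3n-6\sum_{k}(k-1)N_k-6$ (Proposition~\ref{TSCorProp}), whose proof is delegated to~\cite{Ovs}; the inequality is immediate from that formula since the correction term is nonnegative. Your double-counting of vertex--subpolygon incidences and of edges ($\sum_j n_j = n+2d$, with $d\leq n-3$ non-crossing diagonals) gives a short, self-contained derivation that bypasses the citation; note that combined with Euler's formula ($d=\ell-1$, where $\ell$ is the number of sub-polygons) it actually recovers the exact formula of Proposition~\ref{TSCorProp}, so nothing is lost. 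Your observation that equality forces all sub-polygons to be triangles, recovering the totally positive (Conway--Coxeter) case, also matches the remark preceding the corollary in the paper. The one point worth making explicit is that you use only the fact that the dissection is by pairwise non-crossing diagonals, not that the sub-polygons are $3k$-gons; the divisibility by $3$ plays no role in the inequality itself, which is consistent with the paper's stronger exact formula where it only governs the size of the deficit.
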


The total sum can be expressed in terms of the subpolygons
of the corresponding $3d$-dissection.
The following statement is a combination of Corollary~2.3 and Proposition~3.1 of~\cite{Ovs},
we do not dwell on the proof here.

\begin{prop}
\label{TSCorProp}
The total sum of $c_i$'s in the quiddity of a $3d$-dissection of an $n$-gon is
$$
c_1+c_2+\cdots+c_n=
3n-6\sum_{k\leq\left[\frac{n}{3}\right]}
\left(k-1\right)
N_k-6,
$$
where $N_k$ is the number of $3k$-gons in the $3d$-dissection.
\end{prop}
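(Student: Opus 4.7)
The plan is to prove the formula by two elementary double-counting identities followed by a one-line algebraic manipulation. Let $N := \sum_k N_k$ denote the total number of subpolygons of the $3d$-dissection, and $D$ the total number of diagonals used.

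The first step is to express $c_1+\cdots+c_n$ as an incidence count. By the very definition of the quiddity, $c_i$ equals the number of subpolygons meeting the vertex~$i$, so $\sum_i c_i$ counts pairs (vertex, incident subpolygon). Counting these pairs instead from the subpolygons' side, each $3k$-gon contributes $3k$ incidences, giving
\begin{equation*}
c_1+c_2+\cdots+c_n \;=\; \sum_k 3k\, N_k.
\end{equation*}

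The second step evaluates this sum by induction on $D$. When $D=0$ the dissection is trivial, $N=1$ and the single subpolygon has $n$ vertices, so $\sum_k 3k\,N_k = n$. Adding one further non-crossing diagonal splits some subpolygon (with, say, $v$ vertices) into two subpolygons whose vertex counts sum to $v+2$, since the two endpoints of the new diagonal belong to both halves, while $N$ increases by one. Hence after $D$ diagonals
\begin{equation*}
\sum_k 3k\, N_k \;=\; n+2D \qquad\text{and}\qquad N \;=\; 1+D,
\end{equation*}
so that combining with the first identity one obtains $\sum_i c_i = n + 2(N-1) = n + 2N - 2$.

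The final step is a short algebraic rearrangement, using the first identity once more:
\begin{equation*}
3n - 6\sum_k (k-1)N_k - 6 \;=\; 3n - 2\sum_k 3k\,N_k + 6N - 6 \;=\; 3n - 2(n+2N-2) + 6N - 6 \;=\; n+2N-2,
\end{equation*}
which matches the value of $\sum_i c_i$ just computed.

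There is no serious obstacle in this argument; everything reduces to bookkeeping of incidences. The only point that deserves some care is the inductive step, namely that every $3d$-dissection with $D\geq 1$ diagonals is obtained from one with $D-1$ diagonals by adding a single non-crossing diagonal. This is immediate, since deleting any one diagonal of a given dissection merges exactly two adjacent subpolygons into one — and, crucially, their union again has a number of vertices divisible by $3$, so the resulting smaller dissection is itself a valid $3d$-dissection to which the inductive hypothesis applies.
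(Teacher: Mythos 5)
The paper does not actually print a proof of this proposition (it defers to Corollary~2.3 and Proposition~3.1 of~[Ovs]), so your argument can only be judged against the statement itself. Your overall strategy is sound and elementary: the incidence count $\sum_i c_i=\sum_k 3k\,N_k$, the Euler-type identities ``total incidence count $=n+2D$'' and ``number of cells $=D+1$'', and the closing algebra all check out, and the result agrees with the paper's examples (e.g.\ the heptagon dissected into a triangle and a hexagon has $N_1=N_2=1$ and $\sum_i c_i=9=3\cdot 7-6\cdot 1-6$).

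However, the justification of your inductive step contains a false assertion. You claim that deleting a diagonal of a $3d$-dissection merges two adjacent cells into one whose number of vertices is again divisible by~$3$. It is not: if the two cells have $3k_1$ and $3k_2$ vertices, their union has $3k_1+3k_2-2\equiv 1 \pmod 3$ vertices (two triangles merge into a quadrilateral). So a $3d$-dissection with $D\geq 1$ diagonals is in general \emph{not} obtained from a smaller $3d$-dissection by adding one diagonal, and the induction as you set it up --- over the class of $3d$-dissections --- does not close. The repair is immediate: run the induction over \emph{arbitrary} dissections of the $n$-gon by $D$ pairwise non-crossing diagonals, proving that the total number of pairs (cell, incident vertex) equals $n+2D$ and that the number of cells equals $D+1$; these are exactly what your splitting argument establishes, and they require no divisibility hypothesis. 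Specializing to $3d$-dissections only at the very end, where the incidence count becomes $\sum_k 3k\,N_k$, the rest of your computation goes through verbatim.
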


It follows that
the total sum of $c_i$'s can vary by multiples of $6$, and
the sign on the right-hand side of~$M(c_1,\ldots,c_n)=\pm\Id$ alternates.

\begin{cor}
The solutions of the equation
$M(c_1,\ldots,c_n)=\pm\Id$ can be ranged by levels:
\begin{equation}
\label{TotSumDropEq}
\begin{array}{rclr}
c_1+c_2+\cdots+c_n &=& 3n-6, &(-\Id)\\[2pt]
&=& 3n-12, &(\Id)\\[2pt]
&=& 3n-18,&(-\Id)\\
&&\ldots&
\end{array}
\end{equation}
\end{cor}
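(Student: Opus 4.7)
The plan is to combine Proposition~\ref{TSCorProp} with Corollary~\ref{PlusMinProp} and track the parity of the total sum against the sign on the right-hand side.

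First, I would rewrite Proposition~\ref{TSCorProp} as
$$
c_1+c_2+\cdots+c_n \;=\; 3n-6-6M, \qquad M:=\sum_{k\leq[n/3]}(k-1)N_k.
$$
Since $M$ is a non-negative integer, the total sum visibly takes only the values $3n-6,\,3n-12,\,3n-18,\ldots$, which already produces the arithmetic progression claimed in~\eqref{TotSumDropEq}. So the only remaining task is to show that the sign of $M(c_1,\ldots,c_n)$ alternates with~$M$.

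For the sign, I would invoke Corollary~\ref{PlusMinProp}: the matrix equals $-\Id$ exactly when the number of subpolygons with an even number of vertices is even. A $3k$-gon has an even number of vertices precisely when $k$ is even, so the relevant quantity is $E:=\sum_{k\text{ even}}N_k$, and we need to show that $M\equiv E\pmod 2$. This is immediate from
$$
M=\sum_{k\geq 1}(k-1)N_k\equiv \sum_{k\text{ even}}N_k \pmod 2,
$$
because the coefficient $k-1$ is odd iff $k$ is even, so only the even-$k$ terms survive modulo~$2$.

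Putting the two observations together: when $M=0$ the sum is $3n-6$ and the sign is $-\Id$; increasing $M$ by~$1$ drops the sum by~$6$ and flips the parity of $E$, hence flips the sign. This gives exactly the alternating pattern~\eqref{TotSumDropEq}. There is no real obstacle here — both ingredients are already stated in the excerpt, and the only computation needed is the one-line parity identity $M\equiv E\pmod 2$.
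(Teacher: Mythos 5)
Your proof is correct and follows essentially the same route as the paper, which simply asserts after Proposition~\ref{TSCorProp} that the sum drops by multiples of $6$ and the sign alternates. Your parity computation $\sum_k(k-1)N_k\equiv\sum_{k\,\mathrm{even}}N_k\pmod 2$, combined with Corollary~\ref{PlusMinProp}, is exactly the detail the paper leaves implicit, and it is carried out correctly.
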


\section{Walks on the Farey graph}\label{FarSec}

In this section we show that every solution of
the equation $M(c_1,\ldots,c_n)=\pm\Id$ admits an embedding into the Farey tessellation.
This is a generalization of the construction from Section~\ref{TrsFarSec}.

In particular, a totally positive solution 
corresponding to a triangulation of the $n$-gon,
defines a monotonously decreasing walk from~$\frac{1}{0}$ to~$\frac{0}{1}$.
This is an $n$-cycle in the Farey graph that we refer to as a
``Farey $n$-gon''.
The Farey tessellation then induces a triangulation which
coincides with the initial triangulation.

A more general solution corresponding to a $3d$-dissection of an $N$-gon
defines (an oriented) walk along a certain Farey $n$-gon, where $n<N$.
Every such walk is an $N$-cycle, and we show that
the quiddity of the $3d$-dissection of the $N$-gon can be recovered
from the triangulation of the Farey $n$-gon.

\subsection{Solutions of $M(c_1,\ldots,c_n)=-\Id$ and $n$-cycles in the Farey graph}\label{ConstrWalkFSec}

We use the following combinatorial data in the Farey graph.

\begin{defn}
(i) 
An {\it $n$-cycle} in the Farey graph is a sequence $(v_{i})_{i \in\Z}$ of vertices
 (with the cyclic order convention $v_{i+n}=v_{i}$),
such that $v_{i-1}$ and $v_{i}$ are connected by an edge for all $i$.

(ii)
We call a {\it Farey $n$-gon} every $n$-cycle in the Farey graph such that 
$$
v_{0}=\frac10,
\qquad
 v_{n-1}=\frac01,
 \qquad\hbox{and}\qquad
 v_{i-1}>v_{i},
 $$
 for all $i=1, \ldots, n-1$.
\end{defn}

\begin{ex}
\label{DiffEx}
The sequence
$\left\{\frac10,\frac21,\frac32,\frac43,\frac{17}{13},
\frac{64}{49},\frac{111}{85},\frac{158}{121},\frac{47}{36},\frac{30}{23},
\frac{13}{10},\frac{22}{17},\frac97,\frac54,\frac11, \frac01\right\}$
is a Farey hexadecagon:
\begin{center}
\psscalebox{1.0 1.0} 
{\psset{unit=0.6cm}
\begin{pspicture}(0,-5.5125)(20.957693,5.5125)
\psdots[linecolor=black, dotsize=0.16](1.6788461,-4.9075)
\psdots[linecolor=black, dotsize=0.16](9.678846,-4.9075)
\psdots[linecolor=black, dotsize=0.16](0.07884616,-4.9075)
\psdots[linecolor=black, dotsize=0.16](11.278846,-4.9075)
\psdots[linecolor=black, dotsize=0.16](3.2788463,-4.9075)
\psdots[linecolor=black, dotsize=0.16](4.478846,-4.9075)
\psdots[linecolor=black, dotsize=0.16](10.478847,-4.9075)
\psdots[linecolor=black, dotsize=0.16](6.878846,-4.9075)
\psdots[linecolor=black, dotsize=0.16](8.078846,-4.9075)
\psdots[linecolor=black, dotsize=0.16](8.878846,-4.9075)
\psdots[linecolor=black, dotsize=0.16](12.478847,-4.9075)
\psdots[linecolor=black, dotsize=0.16](5.6788464,-4.9075)
\psdots[linecolor=black, dotsize=0.16](13.678846,-4.9075)
\psdots[linecolor=black, dotsize=0.16](15.278846,-4.9075)
\psdots[linecolor=black, dotsize=0.16](16.878845,-4.9075)
\psdots[linecolor=black, dotsize=0.16](20.878845,-4.9075)
\psarc[linecolor=black, linewidth=0.08, dimen=outer](6.2788463,-4.9075){0.6}{0.0}{180.0}
\psarc[linecolor=black, linewidth=0.08, dimen=outer](7.478846,-4.9075){0.6}{0.0}{180.0}
\psarc[linecolor=black, linewidth=0.08, dimen=outer](8.478847,-4.9075){0.4}{0.0}{180.0}
\psarc[linecolor=black, linewidth=0.08, dimen=outer](9.278846,-4.9075){0.4}{0.0}{180.0}
\psarc[linecolor=black, linewidth=0.08, dimen=outer](10.078846,-4.9075){0.4}{0.0}{180.0}
\psarc[linecolor=black, linewidth=0.08, dimen=outer](10.878846,-4.9075){0.4}{0.0}{180.0}
\psarc[linecolor=black, linewidth=0.08, dimen=outer](11.878846,-4.9075){0.6}{0.0}{180.0}
\psarc[linecolor=black, linewidth=0.08, dimen=outer](13.078846,-4.9075){0.6}{0.0}{180.0}
\psarc[linecolor=black, linewidth=0.08, dimen=outer](0.87884617,-4.9075){0.8}{0.0}{180.0}
\psarc[linecolor=black, linewidth=0.08, dimen=outer](2.478846,-4.9075){0.8}{0.0}{180.0}
\psarc[linecolor=black, linewidth=0.08, dimen=outer](3.8788462,-4.9075){0.6}{0.0}{180.0}
\psarc[linecolor=black, linewidth=0.08, dimen=outer](5.078846,-4.9075){0.6}{0.0}{180.0}
\psarc[linecolor=black, linewidth=0.08, dimen=outer](10.478847,-4.9075){10.4}{0.0}{180.0}
\psarc[linecolor=black, linewidth=0.08, dimen=outer](14.478847,-4.9075){0.8}{0.0}{180.0}
\psarc[linecolor=black, linewidth=0.08, dimen=outer](16.078846,-4.9075){0.8}{0.0}{180.0}
\psarc[linecolor=black, linewidth=0.08, dimen=outer](18.878845,-4.9075){2.0}{0.0}{180.0}
\psarc[linecolor=black, linewidth=0.02, dimen=outer](9.678846,-4.9075){0.8}{0.0}{180.0}
\psarc[linecolor=black, linewidth=0.02, dimen=outer](5.6788464,-4.9075){1.2}{0.0}{180.0}
\psarc[linecolor=black, linewidth=0.02, dimen=outer](10.078846,-4.9075){1.2}{0.0}{180.0}
\psarc[linecolor=black, linewidth=0.02, dimen=outer](10.678846,-4.9075){1.8}{0.0}{180.0}
\psarc[linecolor=black, linewidth=0.02, dimen=outer](11.278846,-4.9075){9.6}{0.0}{180.0}
\psarc[linecolor=black, linewidth=0.02, dimen=outer](9.278846,-4.9075){7.6}{0.0}{180.0}
\psarc[linecolor=black, linewidth=0.02, dimen=outer](8.478847,-4.9075){6.8}{0.0}{180.0}
\psarc[linecolor=black, linewidth=0.02, dimen=outer](7.6788464,-4.9075){6.0}{0.0}{180.0}
\psarc[linecolor=black, linewidth=0.02, dimen=outer](8.478847,-4.9075){5.2}{0.0}{180.0}
\psarc[linecolor=black, linewidth=0.02, dimen=outer](9.078846,-4.9075){4.6}{0.0}{180.0}
\psarc[linecolor=black, linewidth=0.02, dimen=outer](10.278846,-4.9075){3.4}{0.0}{180.0}
\psarc[linecolor=black, linewidth=0.02, dimen=outer](10.278846,-4.9075){2.2}{0.0}{180.0}
\psarc[linecolor=black, linewidth=0.02, dimen=outer](9.678846,-4.9075){2.8}{0.0}{180.0}
\rput(0.07884616,-5.4075){$\frac01$}
\rput(1.6788461,-5.4075){$\frac11$}
\rput(3.2788463,-5.4075){$\frac54$}
\rput(4.478846,-5.4075){$\frac97$}
\rput(20.878845,-5.4075){$\frac10$}
\rput(16.878845,-5.4075){$\frac21$}
\rput(15.278846,-5.4075){$\frac32$}
\rput(13.678846,-5.4075){$\frac43$}
\rput(12.478847,-5.4075){$\frac{17}{13}$}
\rput(11.278846,-5.4075){$\frac{64}{49}$}
\rput(10.478847,-5.4075){$\frac{111}{85}$}
\rput(9.678846,-5.4075){$\frac{158}{121}$}
\rput(8.878846,-5.4075){$\frac{47}{36}$}
\rput(8.078846,-5.4075){$\frac{30}{23}$}
\rput(6.878846,-5.4075){$\frac{13}{10}$}
\rput(5.6788464,-5.4075){$\frac{22}{17}$}
\end{pspicture}
}
\end{center}
\end{ex}

Let $(c_1,\ldots,c_n)$ be a set of positive integers such that $M(c_1,\ldots,c_n)=-\Id$.
Our next goal is to define the corresponding $n$-cycle in the Farey graph.

We define a sequence of $n$ vertices in the Farey graph
$\left(\frac{r_0}{s_0},\ldots,\frac{r_{n-1}}{s_{n-1}}\right)$  that starts with~$\frac{1}{0}$
and ends with~$\frac{0}{1}$\, with the following recurrence relations:
\begin{equation}
\label{CycEqBis}
\left\{
\begin{array}{rcl}
r_i &:=& c_ir_{i-1}-r_{i-2}\\
s_i &:=& c_is_{i-1}-s_{i-2}.
\end{array}\right.
\end{equation}
and the initial conditions
$\frac{r_{-1}}{s_{-1}}=\frac{0}{-1}$, $\frac{r_0}{s_0}=\frac10$.
We get a sequence of the form
\begin{equation}
\label{CycEq}
\left(\frac{r_0}{s_0},\ldots,\frac{r_{n-1}}{s_{n-1}}\right)=
\left(
\frac{1}{0},\quad
\frac{c_1}{1},\quad
\frac{c_1c_2-1}{c_2},\quad
\frac{c_1c_2c_3-c_1-c_2}{c_2c_3-1},\;
\ldots\;,
\quad
\frac{0}{1}
\right);
\end{equation}
The fact that~$\frac{r_{n-1}}{s_{n-1}}=\frac{0}{1}$
follows from the relation $M(c_1,\ldots,c_n)=-\Id$.
Indeed, inductively one obtains
$$
M(c_1,\ldots,c_i)=
\left(
\begin{array}{cc}
r_i&-r_{i-1}\\[4pt]
s_i&-s_{i-1}
\end{array}
\right),
$$
where
$$
\frac{r_i}{s_i}=\llbracket{}c_1,\ldots,c_i\rrbracket{}.
$$
is the sequence of convergents of the negative continued fraction 
$\llbracket{}c_1,\ldots,c_n\rrbracket{}$.
In particular, $-\Id=M(c_1,\ldots,c_n)=\left(
\begin{array}{cc}
r_n&-r_{n-1}\\[4pt]
s_n&-s_{n-1}
\end{array}
\right),$
so $r_{n-1}=0$ and $s_{n-1}=1$,
and furthermore $r_n=-1=-r_0$ and $s_n=0=s_0$.
This implies that the sequences defined by~\eqref{CycEqBis} are $n$-antiperiodic,
and one obtains a sequence
$\left(\frac{r_i}{s_i}\right)_{i\in\Z}$, such that
$$
\frac{r_{i+n}}{s_{i+n}}=\frac{-r_i}{-s_i}.
$$ 

\begin{prop}
\label{nCycFFac}
(i)
If $(c_1,\ldots,c_n)$ is a positive solution of $M(c_1,\ldots,c_n)=-\Id$, then
the sequence~\eqref{CycEq} is an $n$-cycle in the Farey graph.

(ii) If $(c_1,\ldots,c_n)$ is a totally positive solution, then 
the sequence~\eqref{CycEq} is a Farey $n$-gon.
\end{prop}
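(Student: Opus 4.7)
The plan is to prove (i) by a direct matrix calculation and (ii) by induction on $n$ using the surgery move \eqref{FirstSurM}.

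For part (i), I would first establish by induction on $i$ the matrix identity
\[
M(c_1,\ldots,c_i) \;=\; \begin{pmatrix} r_i & -r_{i-1} \\ s_i & -s_{i-1} \end{pmatrix},
\]
with $(r_i, s_i)$ given by \eqref{CycEqBis}. The base case is immediate and the inductive step is a single matrix multiplication. Taking determinants gives $r_{i-1} s_i - r_i s_{i-1} = 1$, which implies both that $\gcd(r_i, s_i) = 1$, so that $v_i := \frac{r_i}{s_i} \in \Q \cup \{\infty\}$ is a well-defined Farey vertex (with the convention $\frac{1}{0}$ when $s_i = 0$), and that consecutive $v_{i-1}, v_i$ are joined by a Farey edge. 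The hypothesis $M(c_1,\ldots,c_n) = -\Id$ then forces $r_{n-1} = 0$, $s_{n-1} = 1$ (hence $v_{n-1} = \frac{0}{1}$) and $(r_n, s_n) = -(r_0, s_0)$; continuing the recurrence yields the antiperiodicity $(r_{j+n}, s_{j+n}) = -(r_j, s_j)$ for all $j \in \Z$, hence $v_{j+n} = v_j$, so the sequence is a genuine $n$-cycle in the Farey graph.

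For part (ii), granted (i) we already have the $n$-cycle structure and the endpoints $v_0 = \frac{1}{0}$, $v_{n-1} = \frac{0}{1}$; only the strict monotonicity $v_{i-1} > v_i$ for $1 \leq i \leq n-1$ remains to be shown. I would induct on $n$. The base case $n = 3$ is the unique totally positive triple $(1,1,1)$, producing the Farey $3$-gon $(\tfrac{1}{0}, \tfrac{1}{1}, \tfrac{0}{1})$. For $n \geq 4$, Theorem \ref{CoCoThm} identifies $(c_1,\ldots,c_n)$ with the quiddity of a triangulated $n$-gon; since any triangulation has at least two ears and two ears cannot sit at the cyclically adjacent positions $1$ and $n$ (both ear-triangles would share vertex $1$, forcing $c_1 \geq 2$), some $c_i = 1$ with $2 \leq i \leq n-1$ and $c_{i-1}, c_{i+1} \geq 2$. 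Applying the surgery \eqref{FirstSurM} yields a totally positive $(n-1)$-tuple whose associated sequence $(v_j')$ is, by induction, a Farey $(n-1)$-gon. A direct calculation from \eqref{CycEqBis} (using $r_i = r_{i-1} - r_{i-2}$ from $c_i = 1$) gives
\[
v_j = v_j' \;\; (j \leq i-2), \qquad v_{i-1} = \tfrac{r_{i-2}' + r_{i-1}'}{s_{i-2}' + s_{i-1}'}, \qquad v_j = v_{j-1}' \;\; (j \geq i).
\]
Thus the original sequence is obtained from the surgered one by inserting the Farey mediant of $v_{i-2}'$ and $v_{i-1}'$ between them; as the mediant lies strictly between two Farey-adjacent parents, the monotonic decrease is preserved, completing the induction.

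The main obstacle is verifying the compatibility between the recurrence \eqref{CycEqBis} and the surgery \eqref{FirstSurM}. While this is only careful index bookkeeping (with small edge cases at $i=2$ and $i=n-1$ where one parent is $\tfrac10$ or $\tfrac01$), it is where all the content of (ii) lies, and ultimately it distills to the observation that the $c_i = 1$ identity $r_i = r_{i-1} - r_{i-2}$ is precisely the Farey mediant relation. Part (i) itself is essentially matrix algebra and is routine.
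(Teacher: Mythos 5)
Your part (i) is essentially the paper's own argument: both of you reduce everything to the constancy of the $2\times2$ determinant $r_{i-1}s_i-r_is_{i-1}=1$ along the recurrence \eqref{CycEqBis}, which gives at once the coprimality of $r_i,s_i$ and the Farey edges between consecutive terms, and then read off the ($n$-anti)periodicity from $M(c_1,\ldots,c_n)=-\Id$.

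Part (ii) is where you genuinely diverge, and your route is correct but much heavier than the paper's. The paper's proof is a one-line local computation: the total positivity condition \eqref{ToPoEq} forces $s_is_{i-1}>0$, so the edge relation $r_is_{i-1}-r_{i-1}s_i=-1$ gives $\frac{r_i}{s_i}-\frac{r_{i-1}}{s_{i-1}}=\frac{-1}{s_is_{i-1}}<0$, and monotonicity is immediate. You instead induct on $n$ through the Conway--Coxeter correspondence: locate an ear at an interior position (your argument that two ears cannot occupy the cyclically adjacent positions $1$ and $n$ is sound), cut it via the surgery \eqref{FirstSurM}, and verify that on the level of \eqref{CycEqBis} this deletes exactly the Farey mediant of two consecutive vertices; I checked your identities $r_{i-1}=r'_{i-1}+r'_{i-2}$ and $r_j=r'_{j-1}$ for $j\geq i$, and they do hold, so the induction closes. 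The trade-off: your proof is longer and leans on Theorem~\ref{CoCoThm}(ii), which the paper only sketches, whereas the paper's argument is self-contained and uses total positivity directly. In compensation, your computation makes explicit the mediant (Stern--Brocot) structure of the Farey $n$-gon and in effect already proves the compatibility of the Conway--Coxeter surgery with the Farey embedding, i.e.\ most of the content of Theorem~\ref{FarFac}, which the paper only quotes from the literature.
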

\begin{proof}
Part~(i).
For every pair of sequences, $(r_i)$ and $(s_i)$, satisfying the linear recurrence~\eqref{CycEqBis}
one obtains constant $2\times2$ determinants
$$
\det
\left(
\begin{array}{cc}
r_i&r_{i-1}\\[4pt]
s_i&s_{i-1}
\end{array}
\right)=
\det
\left(
\begin{array}{cc}
-r_{i-2}&r_{i-1}\\[4pt]
-s_{i-2}&s_{i-1}
\end{array}
\right)=
\det
\left(
\begin{array}{cc}
r_{i-1}&r_{i-2}\\[4pt]
s_{i-1}&s_{i-2}
\end{array}
\right)=\ldots =
\det
\left(
\begin{array}{cc}
r_{0}&r_{-1}\\[4pt]
s_{0}&s_{-1}
\end{array}
\right)=-1.
$$ 
Therefore,~$\frac{r_i}{s_i}$ and~$\frac{r_{i-1}}{s_{i-1}}$ are connected by an edge.

Part~(ii).
The positivity condition \eqref{ToPoEq} and the relation $r_is_{i-1}-r_{i-1}s_i=-1$
imply $\frac{r_{i}}{s_{i}}<\frac{r_{i-1}}{s_{i-1}}$.

Hence the result.
\end{proof}

It turns out that every Farey $n$-gon gives rise to a totally positive solution, 
so that we can formulate the following statement. For details; see~\cite[Proposition~2.2.1]{SVS}.

\begin{cor}
\label{CoCoFacCor}
Totally positive solutions of the equation $M(c_1,\ldots,c_n)=-\Id$
are in one-to-one correspondence with Farey $n$-gons.
\end{cor}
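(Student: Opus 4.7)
The forward map is already given by Proposition~\ref{nCycFFac}(ii); the task is to construct the inverse and verify that the two compositions are the identity.

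Given a Farey $n$-gon $(v_0,\ldots,v_{n-1})$, I would begin by writing each $v_i=\frac{r_i}{s_i}$ with the sign convention $r_{i-1}s_i-r_is_{i-1}=1$, extending the sequence by $(r_{-1},s_{-1})=(0,-1)$ as in the proof of Proposition~\ref{nCycFFac}. The Farey adjacency of successive vertices then says exactly that $\{(r_{i-2},s_{i-2}),(r_{i-1},s_{i-1})\}$ is a $\Z$-basis of $\Z^2$, so for each $i$ there is a unique integer $c_i$ satisfying simultaneously $r_i=c_ir_{i-1}-r_{i-2}$ and $s_i=c_is_{i-1}-s_{i-2}$. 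This recurrence defines the candidate inverse; the last index $c_n$ is fixed by imposing that the sequence close up, i.e.\ $(r_n,s_n)=(-r_0,-s_0)$.

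The positivity $c_i\geq 1$ is the central point. I would argue that the strict monotonicity $v_{i-1}>v_i$ together with the endpoint conditions $v_0=\frac{1}{0}$, $v_{n-1}=\frac{0}{1}$ forces every intermediate denominator to be strictly positive, i.e.\ $s_i>0$ for $1\leq i\leq n-1$. Writing $c_i=\frac{s_i+s_{i-2}}{s_{i-1}}$, the bound $s_{i-2}\geq 0$ combined with $s_i>0$ and $s_{i-1}>0$ gives $c_i\geq 1$ for $i\geq 2$; the edge index $i=1$ is handled directly using $s_{-1}=-1$ together with $r_1>0$. Once positivity of the $c_i$ is established, the matrix identity $M(c_1,\ldots,c_n)=-\Id$ follows by exactly the same induction as in Proposition~\ref{nCycFFac}, namely $M(c_1,\ldots,c_i)=\begin{pmatrix}r_i&-r_{i-1}\\ s_i&-s_{i-1}\end{pmatrix}$, and the closing relation $(r_n,s_n)=(-1,0)$ provides $-\Id$ at $i=n$. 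Total positivity holds automatically since each intermediate vertex of the Farey walk is a positive rational, so all convergents $\frac{r_i}{s_i}$ for $i\leq n-3$ are positive.

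Finally, the two maps are mutual inverses by construction, since both are governed by the same three-term recurrence $r_i=c_ir_{i-1}-r_{i-2}$ with identical boundary data. The main obstacle is the positivity step: while the $\Z$-basis property immediately produces integers $c_i$, one must use the Farey-specific structure (monotonicity plus positive denominators of intermediate vertices) rather than pure linear algebra in order to exclude $c_i\leq 0$. Everything else is a routine translation between the recurrence $r_i=c_ir_{i-1}-r_{i-2}$ and the matrix product defining $M(c_1,\ldots,c_n)$.
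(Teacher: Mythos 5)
Your argument is correct and follows exactly the route the paper intends: the forward direction is Proposition~\ref{nCycFFac}(ii), the converse is delegated to \cite[Proposition~2.2.1]{SVS}, and the remark following the corollary identifies the inverse map with the \emph{index} $c_i=\frac{s_{i-1}+s_{i+1}}{s_i}$ of the Farey sequence, which is precisely your three-term recurrence (up to an indexing shift), so your write-up supplies the details the paper omits by citation. The one step worth tightening is the positivity of $c_n$: there $s_n=0$, so the bound $c_n\geq 1$ comes from $c_n=\frac{s_n+s_{n-2}}{s_{n-1}}=s_{n-2}>0$ rather than from ``$s_i>0$'' as in your blanket statement for $i\geq 2$.
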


\begin{rem}
The solution $(c_{1}, \ldots, c_{n})$ can be recovered from the $n$-gon using the 
 notion of {\it index} of a Farey sequence (this notion was defined and studied in~\cite{HS}).
Given an $n$-gon 
$
\left(
\frac{r_0}{s_0},\frac{r_1}{s_1},\ldots,\frac{r_{n-1}}{s_{n-1}}
\right)
$
in the Farey graph, its index
is the $n$-tuple of integers
$$
c_i:=\frac{r_{i-1}+r_{i+1}}{r_i}=
\frac{s_{i-1}+s_{i+1}}{s_i}.
$$
In our terms, the index is nothing else than the quiddity of the triangulation.
\end{rem}

\subsection{Farey $n$-gons and triangulations}\label{EMSec}
Classical properties on the Farey graph imply the following statement whose proof can be found
in~\cite{SVS}.

\begin{prop}
Every Farey $n$-gon is triangulated in the Farey tessellation. 
\end{prop}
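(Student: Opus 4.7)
Plan: The proof proceeds by induction on $n$. The base case $n = 3$ is immediate: by definition a Farey $3$-gon consists of three pairwise Farey-connected vertices, which by property~(b) of the Farey tessellation is itself a Farey triangle.

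For the inductive step with $n \geq 4$, the Farey $n$-gon corresponds by Corollary~\ref{CoCoFacCor} to a totally positive quiddity $(c_1,\ldots,c_n)$, which by the Conway-Coxeter theorem (Theorem~\ref{CoCoThm}) is the quiddity of a triangulation of an abstract $n$-gon. Every such triangulation has at least two ears, and I would argue that at least one ear is \emph{interior}, i.e.\ corresponds to a vertex $v_i$ with $0 < i < n-1$. For $n = 4$ this is verified by inspection of the two possible triangulations. For $n \geq 5$ the extremal vertices $v_0 = \frac{1}{0}$ and $v_{n-1} = \frac{0}{1}$ cannot both be ears: their ear triangles would each be forced to contain the side $\{v_0, v_{n-1}\}$ of the $n$-gon, but this side belongs to only one triangle of the triangulation while the two putative ear triangles have distinct diagonals.

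For such an interior ear $v_i$ the index $c_i$ of the Farey $n$-gon (see the Remark following Corollary~\ref{CoCoFacCor}) equals $1$, which yields $r_{i+1} = r_i - r_{i-1}$ and $s_{i+1} = s_i - s_{i-1}$. A direct determinant computation then gives
$$
r_{i-1} s_{i+1} - r_{i+1} s_{i-1}
= r_{i-1}(s_i - s_{i-1}) - (r_i - r_{i-1}) s_{i-1}
= r_{i-1} s_i - r_i s_{i-1} = 1,
$$
so $v_{i-1}$ and $v_{i+1}$ are Farey-adjacent and $\{v_{i-1}, v_i, v_{i+1}\}$ is a Farey triangle. Removing $v_i$ then yields a monotone $(n-1)$-cycle in the Farey graph with preserved endpoints $\frac{1}{0}$ and $\frac{0}{1}$, i.e.\ a Farey $(n-1)$-gon (whose quiddity is obtained via the surgery~\eqref{FirstSurM}). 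Applying the inductive hypothesis to this smaller polygon and adjoining the Farey triangle $\{v_{i-1}, v_i, v_{i+1}\}$ completes the proof.

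The main obstacle is locating the interior ear. An appealing alternative would be a more synthetic argument: since the boundary of the hyperbolic region enclosed by the Farey $n$-gon consists of Farey geodesics, and Farey edges never cross transversally (property~(a) of the Farey tessellation), no Farey triangle can straddle this boundary; the enclosed region therefore decomposes as a disjoint union of Farey triangles, which must number exactly $n-2$ by an Euler-characteristic count.
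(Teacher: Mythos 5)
Your argument is correct in substance, but note first that the paper does not prove this proposition at all: it simply states that the proof ``can be found in~\cite{SVS}''. So you are supplying a proof where the paper defers to a reference, and your induction (peel off an interior ear, apply the hypothesis to the resulting Farey $(n-1)$-gon) is a perfectly reasonable way to do it. Two remarks. First, the statement to be proved is that the \emph{full} subgraph of the Farey graph on the $n$ vertices \emph{is} a triangulation, not merely that it contains one; your inductive step produces the $n-2$ triangles but does not explicitly rule out additional Farey edges incident to the removed vertex $v_i$. This is easily repaired by the non-crossing property~(a) that you invoke only in your closing paragraph: an extra edge from $v_i$ to some $v_j$ with $j\notin\{i-1,i+1\}$ would cross the geodesic $\{v_{i-1},v_{i+1}\}$, which is impossible; you should say this inside the induction rather than leave it to the ``appealing alternative''. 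Second, your route to the interior ear goes through Corollary~\ref{CoCoFacCor} (whose nontrivial direction, Farey $n$-gon $\Rightarrow$ totally positive solution, the paper also only cites from~\cite{SVS}) and then through Theorem~\ref{CoCoThm}; this is legitimate given the paper's ordering of statements, but it is heavier than necessary. A vertex $v_i$, $0<i<n-1$, maximizing the denominator $s_i$ (or $r_i+s_i$) satisfies $c_is_i=s_{i-1}+s_{i+1}\leq 2s_i$ with equality impossible, hence $c_i=1$ directly from the index formula, and your determinant computation (which is correct, using $r_{i-1}s_i-r_is_{i-1}=1$ from the proof of Proposition~\ref{nCycFFac}) then applies verbatim; this makes the proof self-contained and avoids any appearance of circularity with results themselves imported from~\cite{SVS}. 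Your final synthetic sketch --- the region bounded by the $n$ Farey geodesics cannot be straddled by any Farey triangle, so it is a union of tiles of the tessellation, counted by Euler characteristic --- is essentially the classical argument and is probably the cleanest route of all.
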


In other words, the full subgraph of the Farey graph, containing the vertices of a Farey $n$-gon forms a triangulation of the $n$-gon.
Example \ref{DiffEx} gives an illustration of this statement.

Therefore from a totally positive solution of $M(c_1,\ldots,c_n)=-\Id$, one obtains two triangulations:
one given by Conway and Coxeter's correspondence (see Theorem~\ref{CoCoThm}, Part~(ii)), and 
the other one given by Farey $n$-gons, see Corollary \ref{CoCoFacCor}.

\begin{thm}[\cite{SVS}, Theorem~1]
\label{FarFac}
The Conway and Coxeter triangulation coincides with the Farey triangulation.
\end{thm}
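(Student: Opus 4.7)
The plan is to argue by induction on $n$, simultaneously peeling off one ear from the Conway--Coxeter triangulation and the Farey $n$-gon. The base case $n=3$ is vacuous: the only totally positive solution $(1,1,1)$ is the quiddity of the single triangle, and on the Farey side the associated Farey $3$-gon is $\{1/0,1/1,0/1\}$, already a Farey triangle. For the inductive step, the Conway--Coxeter analysis recalled just after Theorem~\ref{CoCoThm} supplies an index $j$ with $c_j=1$, and the surgery~\eqref{FirstSurM} produces the reduced quiddity $(c_1,\ldots,c_{j-1}-1,c_{j+1}-1,\ldots,c_n)$, whose Conway--Coxeter triangulation is obtained from the original by cutting off the ear at vertex $j$. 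The whole task is then to reproduce this surgery on the Farey side.

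First I would check that the Conway--Coxeter ear is also a Farey ear. Writing $v_i=(r_i,s_i)$, the relation $c_j=1$ in~\eqref{CycEqBis} gives $v_j=v_{j-1}-v_{j-2}$ componentwise, whence a one-line expansion
\[
r_{j-2}s_j-r_js_{j-2}=(r_{j-1}-r_j)s_j-r_j(s_{j-1}-s_j)=r_{j-1}s_j-r_js_{j-1}=\pm1,
\]
the last equality being the constant-determinant identity from the proof of Proposition~\ref{nCycFFac}. Hence $\tfrac{r_{j-2}}{s_{j-2}}$ and $\tfrac{r_j}{s_j}$ are Farey neighbours and $(v_{j-2},v_{j-1},v_j)$ is a Farey triangle, so the Farey triangulation of the Farey $n$-gon also has an ear at $v_{j-1}$.

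Second, I would show that the reduced quiddity, fed into~\eqref{CycEqBis} with the same initial data, produces exactly the Farey $(n-1)$-gon obtained by deleting $v_{j-1}$. Denote the new vectors $\tilde v_i$. For $i\le j-2$ the coefficients are unchanged, so $\tilde v_i=v_i$. At $i=j-1$ the coefficient $c_{j-1}-1$ gives $\tilde v_{j-1}=v_{j-1}-v_{j-2}=v_j$ by the identity of Step~1. At $i=j$ the coefficient $c_{j+1}-1$ gives $\tilde v_j=(c_{j+1}-1)v_j-v_{j-2}=c_{j+1}v_j-v_{j-1}=v_{j+1}$, using the equivalent form $v_{j-1}=v_j+v_{j-2}$ of the ear identity. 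For $i>j$ the coefficient $c_{i+1}$ and a trivial induction give $\tilde v_i=v_{i+1}$. Thus the reduced Farey $(n-1)$-gon is precisely $(v_0,\ldots,v_{j-2},v_j,\ldots,v_{n-1})$. By the induction hypothesis, the Conway--Coxeter triangulation of the reduced quiddity agrees with its Farey triangulation, and gluing back the common ear triangle $(v_{j-2},v_{j-1},v_j)$ then recovers the original Conway--Coxeter triangulation on one side and the original Farey triangulation on the other.

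The main obstacle is not analytic but bookkeeping: the paper tacitly uses both the recurrence index from~\eqref{CycEqBis} and the Conway--Coxeter vertex index from Definition~\ref{QuidDef}, and these differ by a shift, so one has to pin down carefully which vertex of the Farey $n$-gon is labelled by $c_j$ before the ear identity $v_j=v_{j-1}-v_{j-2}$ makes sense. Once that identification is fixed, all the genuine content of the argument reduces to the two-by-two determinant calculation of Step~1, reused once in Step~2.
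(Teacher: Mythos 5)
The paper itself gives no proof of this theorem: it cites \cite{SVS} and explicitly declines to reproduce the argument, so your proposal is supplying a proof rather than competing with one. Your inductive strategy (simultaneous ear-cutting on the Conway--Coxeter side via the surgery~\eqref{FirstSurM} and on the Farey side via the recurrence~\eqref{CycEqBis}) is the natural one, and the two determinant computations at its core are correct: $c_j=1$ does give $v_j=v_{j-1}-v_{j-2}$, hence $r_{j-2}s_j-r_js_{j-2}=\pm1$, and your verification that the reduced quiddity regenerates exactly the sequence $(v_0,\ldots,v_{j-2},v_j,\ldots,v_{n-1})$ is right, including the shift between the recurrence index and the quiddity index that you correctly flag.

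Two points need to be pinned down before the induction closes. First, your argument tacitly requires the ear to sit at an \emph{interior} vertex $v_{j-1}$ with $2\le j\le n-1$: if $j=1$ or $j=n$ the vector $v_{j-2}$ (resp.\ the tail of the reduced sequence) is not a polygon vertex, and deleting $v_0=\frac10$ or $v_{n-1}=\frac01$ destroys the normalization of the Farey $n$-gon. This is repairable in one line --- for $n\ge4$ two adjacent vertices of a triangulated polygon cannot both be ears, and $v_0$, $v_{n-1}$ are adjacent, so among the at least two ears there is always an interior one --- but it should be said, since the Conway--Coxeter analysis only guarantees \emph{some} $j$ with $c_j=1$. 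Second, to conclude that the Farey triangulation of the $n$-gon is the Farey triangulation of the $(n-1)$-gon plus the single triangle $(v_{j-2},v_{j-1},v_j)$, you must rule out further Farey edges from $v_{j-1}$ to vertices $v_i$ with $i\notin\{j-2,j\}$; this follows because such an edge would cross the edge $v_{j-2}v_j$ you have just produced (all other $v_i$ lie outside the interval $(v_j,v_{j-2})$, while $v_{j-1}$ lies inside), contradicting property (a) of the Farey tessellation quoted in Section~\ref{TrsFarSec}. With these two sentences added, the proof is complete.
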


We do not dwell on the proof here (see~\cite[Section~2.2]{SVS}).

The rational labels on the vertices of the triangulation can be recovered directly from the triangulation by the following combinatorial algorithm.
Note that this is the same algorithm as in Section~\ref{FSLabSec}, except for step (1),
and applied to arbitrary triangulations.

\begin{enumerate}
\item
In the triangulated $n$-gon,  label the vertex number $1$  
by~$\frac{1}{0}$ and the vertex number $n$ by $\frac{0}{1}$.
\item
Label all the vertices of the $n$-gon according to the rule:
Whenever two vertices of the same triangle have been assigned the rationals
$\frac{r'}{s'}$ and $\frac{r''}{s''}$, then the third vertex receives the label
$$
\frac{r'}{s'}\oplus\frac{r''}{s''}
:=\frac{r'+r''}{s'+s''}.
$$
\end{enumerate}

\begin{ex}
\label{DiffExBis}
Applying the above rule for the rational labelling on the hexadecagon of Example~\ref{FirstEx}, one 
obtains
$$
\xymatrix @!0 @R=0.50cm @C=0.53cm
 {
&&&&&&\frac43\ar@{-}[dddddddddddd]\ar@{-}[llld]\ar@{-}[rrrrrrddddddddddd]
\ar@{-}[rrrrrrrrddddddddd]\ar@{-}[rrrrrrrrrddddddd]\ar@{-}[rrr]&&&
\frac32\ar@{-}[rrrd]\ar@{-}[rrrrrrddddddd]
\\
&&&\frac{17}{13}\ar@{-}[lldd]\ar@{-}[lldddddddd]
\ar@{-}[rrrddddddddddd]\ar@{-}[dddddddddd]&&&&&&&&& 
\frac21\ar@{-}[rrdd]\ar@{-}[rrrdddddd]\\
\\
&\frac{64}{49}\ar@{-}[ldd]\ar@{-}[dddddd]&&&&&&&&&&&&&
\frac10\ar@{-}[rdd]\ar@{-}[rdddd]\\
\\
\frac{111}{85}\ar@{-}[dd]\ar@{-}[rdddd]&&&&&&&&&&&&&&&\frac01\ar@{-}[dd]\\
\\
\frac{158}{121}\ar@{-}[rdd]&&&&&&&&&&&&&&&\frac11\ar@{-}[ldd]\\
\\
&\frac{47}{36}\ar@{-}[rrdd]&&&&&&&&&&&&&\frac54\ar@{-}[lldd]\\
\\
&&&\frac{30}{23}\ar@{-}[rrrd]&&&&&&&&& \frac97\ar@{-}[llld]\\
&&&&&&\frac{13}{10}\ar@{-}[rrr]\ar@{-}[rrrrrru]&&&\frac{22}{17}
}
$$
which coincides with the Farey hexadecagon of Example~\ref{DiffEx}.
Note that
the triangulation induced from the Farey tessellation coincides with the initial triangulation in
Example~\ref{FirstEx}.
\end{ex}

Consider a cyclic permutation of $(c_1,\ldots,c_n)$.
This gives another solution of
$M(c_1,\ldots,c_n)=-\Id$.
Clearly the corresponding triangulations given by Theorem~\ref{CoCoThm},
Part~(ii) are related by a cyclic permutation of the vertices, i.e. a rotation.

\begin{prop}
The Farey $n$-gons corresponding to a cyclic permutation of
$(c_1,\ldots,c_n)$ are related by a cyclic permutation
modulo the action of $\SL(2,\Z)$ by linear-fractional transformations.
\end{prop}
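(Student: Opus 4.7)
The plan is to prove the statement for a cyclic shift by one position; the general case then follows by iteration. Let $(c_1,\ldots,c_n)$ be a positive solution of $M(c_1,\ldots,c_n)=-\Id$, with associated $n$-cycle $F=(v_0,v_1,\ldots,v_{n-1})$, where each vertex is represented by the column vector $v_j=\binom{r_j}{s_j}$ satisfying the linear recurrence~\eqref{CycEqBis} with initial data $v_0=\binom{1}{0}$ and $v_{-1}=\binom{0}{-1}$. Denote by $(c'_1,\ldots,c'_n)=(c_2,c_3,\ldots,c_n,c_1)$ the once-shifted solution and by $F'=(v'_0,\ldots,v'_{n-1})$ its associated $n$-cycle.

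Set $M:=M(c_1)=\left(\begin{smallmatrix} c_1 & -1 \\ 1 & 0 \end{smallmatrix}\right)\in\SL(2,\Z)$. The claim will be that $F'$ equals the image of the once-shifted original sequence $(v_1,v_2,\ldots,v_n)$ under the linear-fractional action of $M^{-1}$, where we use the $n$-antiperiodicity $v_n=-v_0$ already established. Define $\tilde v_j:=M^{-1}v_{j+1}$. A one-line matrix computation gives
$$
\tilde v_{-1}=M^{-1}\binom{1}{0}=\binom{0}{-1},\qquad
\tilde v_0=M^{-1}\binom{c_1}{1}=\binom{1}{0},
$$
so the initial conditions of $\tilde v$ coincide with those defining $F'$. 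By linearity, applying $M^{-1}$ to the three-term recurrence $v_{j+2}=c_{j+2}v_{j+1}-v_j$ yields $\tilde v_{j+1}=c_{j+2}\tilde v_j-\tilde v_{j-1}=c'_{j+1}\tilde v_j-\tilde v_{j-1}$, which is precisely the recurrence governed by $(c'_1,\ldots,c'_n)$. Uniqueness of solutions to a linear recurrence with prescribed initial data then forces $\tilde v_j=v'_j$ for all $j$, so that $F'=M^{-1}\cdot(\text{cyclic shift of }F)$, as desired.

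For a cyclic shift by $k$ positions, the same argument applies verbatim with $M$ replaced by $M(c_1,\ldots,c_k)\in\SL(2,\Z)$, or one simply iterates the shift-by-one case. I do not foresee any serious obstacle here: the whole argument hinges on the linearity of the three-term recurrence and on the observation that $M^{-1}$ sends the two initial vectors of $F$ back to the two initial vectors of $F'$ in the correct order. The only subtle point worth a line of justification is that the transformed cycle is again in the normalized form beginning at $\tfrac{1}{0}$ and ending at $\tfrac{0}{1}$; this is immediate from the computation of $\tilde v_{-1}$ and $\tilde v_0$ above, together with the fact that $\SL(2,\Z)$ acts on the Farey graph by graph automorphisms.
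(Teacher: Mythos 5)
Your argument is correct and complete: the paper itself gives no proof of this proposition, deferring instead to \cite[Proposition~2.2.1]{SVS}, so your write-up supplies the missing details. The mechanism you use --- conjugating the three-term recurrence~\eqref{CycEqBis} by $M(c_1)^{-1}$ and checking that the two seed vectors $\binom{1}{0}$, $\binom{0}{-1}$ are reproduced in the right order --- is the natural one, and it is consistent with the paper's worked example for $n=6$, where the hexagon of $(3,1,3,1,3,1)$ is indeed the image of the shifted hexagon of $(1,3,1,3,1,3)$ under $M(3)=\left(\begin{smallmatrix}3&-1\\ 1&0\end{smallmatrix}\right)$, exactly as your identity $v_{j+1}=M(c_1)\,v'_j$ predicts. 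The only points worth making fully explicit are the ones you already flag: the antiperiodicity $v_n=-v_0$ (so the shift is well defined projectively, the sign being irrelevant for the linear-fractional action) and the fact that the shifted quiddity is again totally positive, so that $F'$ is genuinely a Farey $n$-gon rather than merely an $n$-cycle.
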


This statement is proved in~\cite[Proposition 2.2.1]{SVS}.

\begin{ex}
Let $n=6$, and consider the totally positive solutions
$(3,1,3,1,3,1)$ and $(1,3,1,3,1,3)$.
They correspond to the Farey hexagons
$$
\left(
\frac{1}{0},\,\frac{3}{1},\,\frac{2}{1},\, \frac{3}{2},\,\frac{1}{1},\,\frac{0}{1}\
\right)
\qquad\hbox{and}\qquad
\left(
\frac{1}{0},\,\frac{1}{1},\,\frac{2}{3},\, \frac{1}{2},\,\frac{1}{3},\,\frac{0}{1}\
\right)
$$
and to the triangulated hexagons
$$
\xymatrix @!0 @R=0.45cm @C=0.55cm
{
&\frac{1}{0}\ar@{-}[ldd]\ar@{-}[rrrdd]\ar@{-}[dddd]\ar@{-}[rr]&& \frac{3}{1}\ar@{-}[rdd]&\\
\\
\frac{0}{1}\ar@{-}[rdd]&&&&  \frac{2}{1}\ar@{-}[ldd]\\
\\
& \frac{1}{1}\ar@{-}[rr]\ar@{-}[rrruu]&& \frac{3}{2}
}
\qquad\hbox{and}\qquad
\xymatrix @!0 @R=0.45cm @C=0.55cm
{
&\frac{1}{0}\ar@{-}[ldd]\ar@{-}[rr]&& \frac{1}{1}\ar@{-}[rdd]\ar@{-}[llldd]\ar@{-}[dddd]&\\
\\
\frac{0}{1}\ar@{-}[rdd]\ar@{-}[rrrdd]&&&&  \frac{2}{3}\ar@{-}[ldd]\\
\\
& \frac{1}{3}\ar@{-}[rr]&& \frac{1}{2}
}
$$
respectively.
One checks that the first Farey hexagon is obtained from the second one
by the action of the matrix
$
\left(
\begin{array}{cc}
3&-1\\[4pt]
1&0
\end{array}
\right).
$
\end{ex}

\subsection{$3d$-dissections and walks on the Farey tessellation}\label{ConstrFEmSec}

Construction~\eqref{CycEqBis} can be applied with an arbitrary positive solution
of the equation $M(c_1,\ldots,c_N)=\pm\Id$.
This leads again to an $N$-cycle starting at~$\frac{1}{0}$ and ending at~$\frac{0}{1}$. 

In the sequence of vertices defining the cycle a vertex may appear several times 
and it will be important to distinguish~$\frac{r}{s}$ and~$\frac{-r}{-s}$.
In other words, we will consider the {\it twofold covering}
of the projective line over~$\Q$.

\begin{defn}
Given a Farey $n$-gon,
$\left(\frac{1}{0},\frac{r_1}{s_1},\ldots,\frac{r_{n-1}}{s_{n-1}},\frac{0}{1}\right)$
and an integer~$N\geq{}n$,
an $N$-periodic (or antiperiodic) sequence
of its vertices,~$\big(\frac{r_{i_{j}}}{s_{i_{j}}}\big)_{j\in\Z}$,
is called

(i) 
a {\it walk} on the $n$-gon if
$\frac{r_{i_{j}}}{s_{i_{j}}}$ and $\frac{r_{i_{j+1}}}{s_{i_{j+1}}}$
are connected by an edge, for all~$j$;

(ii)
a {\it positive walk} if it is a walk and $r_{i_{j}}s_{i_{j+1}}-s_{i_{j}}r_{i_{j+1}}>0$, for all~$j$.
\end{defn}

In other words, fixing an orientation on the Farey $n$-gon, a positive walk has the same orientation.
\begin{ex}
\label{FareyWEx}
a) 
A Farey $n$-gon itself is an $n$-antiperiodic positive walk.
The antiperiodicity is due to the fact that after arrival at $\frac{r_n}{s_n}=\frac{0}{1}$,
one has to continue with $\frac{-1}{0}$ in order to keep 
$r_ns_{n+1}-s_nr_{n+1}=1$.

Let us give simple concrete examples of positive walks.

b)
Consider the Farey quadrilateral 
$\left(\frac{1}{0},\,\frac{1}{1},\,\frac{1}{2},\,\frac{0}{1}\right)$,
the $7$-periodic walk
$
\left(
\frac{1}{0},\,
\frac{1}{1},\,
\frac{0}{1},\,
\frac{-1}{0},\,
\frac{-1}{-1},\,
\frac{-1}{-2},\,
\frac{0}{-1}
\right)
$
 is positive.
 The quadrilateral and its Farey triangulation
 and the walk are represented by the diagrams:
 $$
\xyoption{tips}
\xymatrix @!0 @R=1cm @C=1cm
{
&\frac{1}{0}\ar@{-}[rd]\ar@{-}[ld]
\\
\frac{1}{1}\ar@{-}[rr]&&\frac{0}{1}\\
&\frac{1}{2}\ar@{-}[ru]\ar@{-}[lu]
}
\qquad\qquad\qquad
\xymatrix @!0 @R=1cm @C=1cm
{
&\frac{1}{0}\ar@{-}[rd]\ar@{->}@<3pt>@*{[red]}[ld]\ar@{-}[ld]
&&&\frac{-1}{0}\ar@{-}[rd]\ar@{->}@<3pt>@*{[red]}[ld]\ar@{-}[ld]
\\
\frac{1}{1}\ar@{->}@<3pt>@*{[red]}[rr]\ar@{-}[rr]&&
\frac{0}{1}\ar@{-->}@<3pt>@*{[red]}[lu]
&\frac{-1}{-1}\ar@{-}[rr]&&\frac{0}{-1}\ar@{-->}@<3pt>@*{[red]}[lu]\\
&\frac{1}{2}\ar@{-}[ru]\ar@{-}[lu]
&&&\frac{-1}{-2}\ar@{->}@<3pt>@*{[red]}[ru]\ar@{-}[ru]\ar@{<-}@<-3pt>@*{[red]}[lu]
\ar@{-}[lu]
}
$$
where the dashed arrow indicates a change of signs in the sequence so that the next step of the walk is drawn in the copy of the $n$-gon with opposite signs.

The following $10$-antiperiodic walk
$
\left(
\frac{1}{0},\,
\frac{1}{1},\,
\frac{1}{2},\,
\frac{0}{1},\,
\frac{-1}{-1},\,
\frac{-1}{-2},\,
\frac{0}{-1},\,
\frac{1}{0},\,
\frac{1}{1},\,
\frac{0}{1}
\right)
$
along the same quadrilateral  is also positive and is represented by
$$
\xymatrix @!0 @R=1cm @C=1cm
{
&\frac{1}{0}\ar@{-}[rd]\ar@{->}@<3pt>@*{[red]}[ld]\ar@{-}[ld]
&&&\frac{-1}{0}\ar@{-}[rd]\ar@{-}[ld]
&&&\frac{1}{0}\ar@{-}[rd]\ar@{->}@<3pt>@*{[red]}[ld]\ar@{-}[ld]
\\
\frac{1}{1}\ar@{-}[rr]\ar@{<--}@<-2pt>@*{[red]}[rr]
&&\frac{0}{1}
&\frac{-1}{-1}\ar@{-}[rr]
&&\frac{0}{-1}\ar@{-->}@<3pt>@*{[red]}[lu]
&\frac{1}{1}\ar@{->}@<3pt>@*{[red]}[rr]\ar@{-}[rr]
&&\frac{0}{1}\ar@{-->}@<3pt>@*{[red]}[lu]
\\
&\frac{1}{2}\ar@{->}@<3pt>@*{[red]}[ru]\ar@{-}[ru]\ar@{<-}@<-3pt>@*{[red]}[lu]\ar@{-}[lu]
&&&\frac{-1}{-2}\ar@{->}@<3pt>@*{[red]}[ru]\ar@{-}[ru]\ar@{<-}@<-3pt>@*{[red]}[lu]\ar@{-}[lu]
&&&\frac{1}{2}\ar@{-}[ru]\ar@{-}[lu]
}
$$

c)
Consider the Farey hexagon
$\left(
\frac{1}{0},\,
\frac{1}{1},\,
\frac{2}{3},\,
\frac{1}{2},\,
\frac{1}{3},\,
\frac{0}{1}
\right)$:
$$
\xymatrix @!0 @R=0.45cm @C=0.55cm
{
&\frac{1}{0}\ar@{-}[ldd]\ar@{-}[rr]
&& \frac{1}{1}\ar@{-}[rdd]\ar@{-}[llldd]\ar@{-}[dddd]&\\
\\
\frac{0}{1}\ar@{-}[rdd]\ar@{-}[rrrdd]
&&&&  \frac{2}{3}\ar@{-}[ldd]\\
\\
& \frac{1}{3}\ar@{-}[rr]&& \frac{1}{2}
}
$$
The $9$-periodic walk
$\left(
\frac{1}{0},\,
\frac{1}{1},\,
\frac{1}{2},\,
\frac{1}{3},\,
\frac{0}{1},\,
\frac{-1}{-1},\,
\frac{-2}{-3},\,
\frac{-1}{-2},\,
\frac{0}{-1}
\right)$
is positive and is represented by the diagram
$$
\xymatrix @!0 @R=0.45cm @C=0.55cm
{
&\frac{1}{0}\ar@{-}[ldd]\ar@{->}@<-2pt>@*{[red]}[rr]\ar@{-}[rr]&&
 \frac{1}{1}\ar@{-}[rdd]\ar@{-}[llldd]\ar@{->}@<-3pt>@*{[red]}[dddd]\ar@{-}[dddd]
&&&&\frac{-1}{0}\ar@{-}[ldd]\ar@{-}[rr]
&& \frac{-1}{-1}\ar@{->}@<-3pt>@*{[red]}[rdd]\ar@{-}[llldd]\ar@{-}[dddd]\ar@{-}[rdd]\ar@{-}[rdd]
&
\\
\\
\frac{0}{1}\ar@{<-}@<3pt>@*{[red]}[rdd]\ar@{-}[rrrdd]\ar@{-}[rdd]\ar@{-->}@<-3pt>@*{[red]}[rrruu]
&&&&  \frac{2}{3}\ar@{-}[ldd]
&&\frac{0}{-1}\ar@{-}[rdd]\ar@{<-}@<3pt>@*{[red]}[rrrdd]\ar@{-}[rrrdd]\ar@{-->}@<-3pt>@*{[red]}[rrruu]
&&&&  \frac{-2}{-3}\ar@{->}@<-3pt>@*{[red]}[ldd]\ar@{-}[ldd]
\\
\\
& \frac{1}{3}\ar@{<-}@<2pt>@*{[red]}[rr]\ar@{-}[rr]
&& \frac{1}{2}
&&&& \frac{-1}{-3}\ar@{-}[rr]&& \frac{-1}{-2}
}
$$
\end{ex}

Recall (cf. Section~\ref{ConstrWalkFSec}) that every solution of the equation
$M(c_1,\ldots,c_N)=\pm\Id$ defines an $N$-(anti)periodic positive walk on 
some Farey $n$-gon, where~$n\leq{}N$.
The following theorem is nthe main result of this section.

\begin{thm}
\label{FWalkThm}
(i)
Every $N$-(anti)periodic positive walk on a Farey $n$-gon corresponds to a solution of the equation
$M(c_1,\ldots,c_N)=\pm\Id$.

(ii)
Conversely, every solution of
$M(c_1,\ldots,c_N)=\pm\Id$ 
can be obtained from an
$N$-(anti)periodic walk on
Farey $n$-gons with $n\leq{}N$.
\end{thm}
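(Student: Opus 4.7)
Both parts of the theorem ultimately rest on the same two-term recurrence
$$
r_j = c_j r_{j-1} - r_{j-2}, \qquad s_j = c_j s_{j-1} - s_{j-2},
$$
together with Proposition~\ref{MatConv}, which identifies $M(c_1,\ldots,c_j)=\bigl(\begin{smallmatrix}r_j & -r_{j-1}\\ s_j & -s_{j-1}\end{smallmatrix}\bigr)$ whenever the sequence is started from $(r_0,s_0)=(1,0)$ and $(r_{-1},s_{-1})=(0,-1)$. Under this correspondence, the equation $M(c_1,\ldots,c_N)=\pm\Id$ becomes equivalent to $N$-periodicity (case $+\Id$) or $N$-antiperiodicity (case $-\Id$) of the sequence $(r_j,s_j)$. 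The determinant $r_{j-1}s_j-r_js_{j-1}$ is preserved by the recurrence and equal to $1$ at the initial step, so it is identically $1$; this single identity delivers both the Farey-connectedness of consecutive vertices and the positivity of the walk in the sense of Section~\ref{ConstrFEmSec}.

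For part~(i), I would start with a positive walk $(v_{i_j})$ on a Farey $n$-gon and define the coefficients $c_j$ by solving
$$
r_{i_j}+r_{i_{j-2}}=c_j r_{i_{j-1}}, \qquad s_{i_j}+s_{i_{j-2}}=c_j s_{i_{j-1}}.
$$
This system is consistent because $(r_{i_j}+r_{i_{j-2}})s_{i_{j-1}}-(s_{i_j}+s_{i_{j-2}})r_{i_{j-1}}$ vanishes by the two Farey identities at the consecutive steps. Integrality of $c_j$ then follows from $\gcd(r_{i_{j-1}},s_{i_{j-1}})=1$, and positivity follows from the positive walk hypothesis: both $v_{i_{j-2}}$ and $v_{i_j}$ are Farey-neighbours of $v_{i_{j-1}}$ sitting on the same side of the oriented tangent direction at $v_{i_{j-1}}$. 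Iterating the recurrence with these $c_j$'s and applying Proposition~\ref{MatConv} at $j=N$ then shows that $M(c_1,\ldots,c_N)$ acts on the initial data by $\pm\Id$, the sign being $+$ if the walk is $N$-periodic and $-$ if it is $N$-antiperiodic.

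For part~(ii), the same recurrence, applied to a positive integer solution of $M(c_1,\ldots,c_N)=\pm\Id$, directly manufactures an $N$-(anti)periodic positive walk in the Farey graph; what needs work is exhibiting a Farey $n$-gon (with $n\leq N$) on which this walk lives. I plan to proceed by induction on $N$, running backwards the two surgery operations \eqref{FirstSurM} and \eqref{SecSurM} from the proof of Theorem~\ref{SecondMainThm}. The base case $(1,1,1)$ gives the Farey triangle $\{\tfrac{1}{0},\tfrac{1}{1},\tfrac{0}{1}\}$. For the inductive step, the argument recalled at the start of Section~\ref{QuidSSec} forces some $c_i=1$; surgery~\eqref{FirstSurM}, when its two neighbours are both $\geq 2$, simply deletes $v_i=v_{i-1}+v_{i+1}$ from the walk (a direct check from the recurrence), so by induction the reduced walk lives on some Farey $n'$-gon, and reinserting $v_i$ either leaves the $n'$-gon unchanged (if $v_i$ was already a vertex) or enlarges it along a boundary edge to a Farey $(n'+1)$-gon. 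Surgery~\eqref{SecSurM} is handled analogously, with the extra bookkeeping that it flips the sign $\pm\Id$, and its reverse corresponds to attaching a Farey triangle across an existing edge of the $n'$-gon.

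The main obstacle is the geometric part of this induction: one must check that in every case the Farey-mediant $v_i$ is compatible with the $n'$-gon produced by the inductive hypothesis, i.e.\ that inserting it in the correct cyclic position actually yields a Farey polygon. This rests on the uniqueness of the Farey triangulation (already invoked in Theorem~\ref{FarFac}) together with the elementary fact that two Farey-connected rationals and their mediant form a Farey triangle. The case of surgery~\eqref{SecSurM}, where two consecutive $1$'s correspond to traversing a Farey triangle along two of its sides, is a bit more delicate and will require keeping the $N$-periodic and $N$-antiperiodic cases separate throughout the induction.
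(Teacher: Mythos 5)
Your part~(i) is essentially the paper's argument: the Wronskian identity $r_{i}s_{i+1}-r_{i+1}s_{i}=1$ forces the numerators and denominators to satisfy a common three-term recurrence $V_{j+1}=c_{j+1}V_{j}-V_{j-1}$, and since $(r_i)$ and $(s_i)$ are two independent $N$-(anti)periodic solutions, the monodromy $M(c_1,\ldots,c_N)$ is $\pm\Id$. Your explicit formula for $c_j$ (equivalently $c_j=r_{i_{j-2}}s_{i_j}-s_{i_{j-2}}r_{i_j}$) and the remark on integrality are a reasonable amplification; be aware, though, that the claim that these $c_j$ are \emph{positive} is asserted rather than proved, both by you and by the paper --- the degenerate step $v_{i_j}=-v_{i_{j-2}}$ would give $c_j=0$ and deserves a word.

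Part~(ii) is where the genuine gap lies. Your plan --- induct on $N$ by running the surgeries \eqref{FirstSurM} and \eqref{SecSurM} backwards --- is a different route from the paper's, and the step you yourself flag as ``the main obstacle'' is precisely the content of the statement, not a technicality. Reversing \eqref{FirstSurM} (inserting the mediant of two consecutive walk vertices) is manageable, but reversing \eqref{SecSurM} splits $c$ into $c_i+c_{i+3}=c+1$ \emph{arbitrarily} and sends the walk on a detour around a Farey triangle $\left\{u,\;c_iu-t,\;c_iu-t-u\right\}$ attached at the current vertex; the location of that triangle relative to the inductively constructed $n'$-gon depends on the splitting, and proving that the union of vertices is again a Farey polygon is essentially the whole theorem. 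The paper avoids all of this with a short planarity argument: the sequence produced by the recurrence \eqref{CycEqBis} satisfies \eqref{WronEq}, so consecutive points of the walk are joined by edges of the Farey tessellation, i.e.\ by non-crossing geodesics. If two visited points that are adjacent in the ordering of $\Q$ (no other visited point strictly between them) were \emph{not} Farey-connected, each of them would have a walk-neighbour lying beyond the other, and the corresponding Farey edges would cross --- impossible. Hence the distinct visited points automatically form a Farey $n$-gon with $n\leq N$, and the sequence is a walk on it. I recommend replacing your induction by this non-crossing argument; as written, your proof of part~(ii) is incomplete.
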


\begin{proof}
Part (i).
Consider an $N$-(anti)periodic positive walk 
$\left(\frac{r_i}{s_i}\right)_{1\leq{}i\leq{}N}$.
Since for every  $i$ we have
\begin{equation}
\label{WronEq}
\det
\left(
\begin{array}{cc}
r_i&r_{i+1}\\[4pt]
s_i&s_{i+1}
\end{array}
\right)=
r_is_{i+1}-r_{i+1}s_i=1,
\end{equation}
both, the numerator and the denominator must satisfy a linear
recurrence
$$
V_{i+1}=c_iV_i-V_{i-1},
$$
with some $N$-periodic sequence $(c_i)_{i\in\Z}$.
The monodromy of this equation is the matrix
$M(c_1,\ldots,c_N)$.
Since $(r_i)_{i\in\Z}$ and $(s_i)_{i\in\Z}$ are two
(anti)periodic solutions, this monodromy is equal to $\pm\Id$.

Part (ii). 
Given a solution of the equation $M(c_1,\ldots,c_N)=\pm\Id$,
applying the construction~\eqref{CycEq}, 
one obtains a sequence of rationals $\big(\frac{r_i}{s_i}\big)_{i\in\Z}$,
which is periodic or antiperiodic depending on the sign in the right-hand-side of the equation
and satisfies~\eqref{WronEq}.
To prove that this sequence is indeed a walk on a Farey $n$-gon, one needs to show
that every pair of neighbors $\frac{r_i}{s_i}$ and $\frac{r_j}{s_j}$
(i.e., such points that there is no other point in the sequence in the interval $\big(\frac{r_i}{s_i},\frac{r_j}{s_j}\big)$)
is connected by an edge in the Farey graph.
Suppose that $\frac{r_i}{s_i}$ and $\frac{r_j}{s_j}$ are not connected.
Assume that $\frac{r_i}{s_i}<\frac{r_j}{s_j}$.
Then either $\frac{r_{i+1}}{s_{i+1}}>\frac{r_j}{s_j}$, or $\frac{r_{i-1}}{s_{i-1}}>\frac{r_j}{s_j}$,
and both of these points must be connected to $\frac{r_i}{s_i}$.
Similarly, either $\frac{r_{j+1}}{s_{j+1}}<\frac{r_i}{s_j}$, or $\frac{r_{j-1}}{s_{j-1}}<\frac{r_i}{s_i}$,
and both of these points must be connected to $\frac{r_j}{s_j}$.
This means that there are crossing edges (geodesics in the Farey tessellation), which is a contradiction.
\end{proof}

Theorem~\ref{FWalkThm} establishes a one-to-one correspondence between
positive $N$-(anti)periodic walks on the Farey tessellation and solutions of $M(c_1,\ldots,c_N)=\pm\Id$.
Theorem~\ref{FWalkThm} together with Theorem~\ref{SecondMainThm} then imply the following.

\begin{cor}
The sequence $(c_1,\ldots,c_N)$ corresponding to an $N$-(anti)periodic positive walk
is a quiddity of a $3d$-dissection of an $N$-gon.
\end{cor}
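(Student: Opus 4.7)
The plan is to combine the two preceding results and insert one small verification in between. Starting from an $N$-(anti)periodic positive walk $\bigl(\tfrac{r_i}{s_i}\bigr)_{i\in\Z}$ on a Farey $n$-gon, the Wronskian identity $r_i s_{i+1} - r_{i+1} s_i = 1$ shows, exactly as in the proof of Theorem~\ref{FWalkThm}(i), that both $(r_i)$ and $(s_i)$ satisfy the same linear recurrence $V_{i+1} = c_i V_i - V_{i-1}$ with $N$-periodic coefficients $c_i$, and that $M(c_1,\ldots,c_N) = \pm\Id$. If I can verify that each $c_i$ is a \emph{positive integer}, then Theorem~\ref{SecondMainThm}(ii) applies directly and delivers a $3d$-dissection of the $N$-gon whose quiddity is precisely $(c_1,\ldots,c_N)$.

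Integrality is immediate from Cramer's rule applied to the $2\times 2$ system in unknown $c_i$ obtained from $r_{i+1} = c_i r_i - r_{i-1}$ and $s_{i+1} = c_i s_i - s_{i-1}$: the determinant of the coefficient matrix is $\pm 1$ by the Farey edge relation, so $c_i = r_{i-1} s_{i+1} - r_{i+1} s_{i-1}$, manifestly in $\Z$. Positivity is the place where the hypothesis that the walk is \emph{positive} is actually used: it means that the triple $\bigl(\tfrac{r_{i-1}}{s_{i-1}}, \tfrac{r_i}{s_i}, \tfrac{r_{i+1}}{s_{i+1}}\bigr)$ has, at each step, the same cyclic orientation around the Farey $n$-gon, which by direct Farey-sum computation forces $c_i = \tfrac{r_{i-1}+r_{i+1}}{r_i} = \tfrac{s_{i-1}+s_{i+1}}{s_i} > 0$.

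With positive integers $c_i$ in hand and the equation $M(c_1,\ldots,c_N) = \pm\Id$ established, Theorem~\ref{SecondMainThm}(ii) immediately produces the sought $3d$-dissection. The only genuinely substantive step is the positivity verification; the rest is formal and has already been carried out in the proof of Theorem~\ref{FWalkThm}. I expect the positivity check to be the main (and essentially only) obstacle, and it can be handled cleanly by recording the sign of $r_{i-1} s_{i+1} - r_{i+1} s_{i-1}$ in terms of the orientation of the positive walk.
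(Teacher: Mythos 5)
Your proof is correct and follows exactly the route the paper takes: the corollary is stated there as an immediate consequence of combining Theorem~\ref{FWalkThm}\,(i) (a positive walk yields a solution of $M(c_1,\ldots,c_N)=\pm\Id$) with Theorem~\ref{SecondMainThm}\,(ii) (every positive-integer solution is the quiddity of a $3d$-dissection). The only material you add is the explicit check that the coefficients $c_i=r_{i-1}s_{i+1}-r_{i+1}s_{i-1}$ are positive integers --- precisely the glue the two cited theorems require, which the paper leaves implicit (compare Section~\ref{RecQWSec}, where $c_i$ is identified with a count of triangles on the positive side of the walk).
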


\begin{ex}
Let us continue Example~\ref{FareyWEx}.

The $7$-periodic walk 
$
\left(
\frac{1}{0},\,
\frac{1}{1},\,
\frac{0}{1},\,
\frac{-1}{0},\,
\frac{-1}{-1},\,
\frac{-1}{-2},\,
\frac{0}{-1}
\right)
$
generates the $7$-periodic sequence $(c_i)_{i\in\Z}$ with the period
$(1,1,1,1,2,1,2)$.
The $10$-antiperiodic walk
$
\left(
\frac{1}{0},\,
\frac{1}{1},\,
\frac{1}{2},\,
\frac{0}{1},\,
\frac{-1}{-1},\,
\frac{-1}{-2},\,
\frac{0}{-1},\,
\frac{1}{0},\,
\frac{1}{1},\,
\frac{0}{1}
\right)
$
generates the $10$-periodic sequence $(c_i)_{i\in\Z}$ with the period
$(1,2,1,1,1,1,2,1,1,1)$.
These are quiddities of the $3d$-dissections
$$
\xymatrix @!0 @R=0.35cm @C=0.35cm
{
&&&1\ar@{-}[rrd]\ar@{-}[lld]
\\
&2\ar@{-}[ldd]\ar@{-}[rrrr]&&&& 2\ar@{-}[rdd]&\\
\\
1\ar@{-}[rdd]&&&&&& 1\ar@{-}[ldd]\\
\\
&1\ar@{-}[rrrr]&&&&1
}
\qquad
\xymatrix @!0 @R=0.32cm @C=0.45cm
 {
&&&2\ar@{-}[dddddddd]\ar@{-}[lld]\ar@{-}[rrd]&
\\
&1\ar@{-}[ldd]&&&& 1\ar@{-}[rdd]\\
\\
1\ar@{-}[dd]&&&&&&1\ar@{-}[dd]\\
\\
1\ar@{-}[rdd]&&&&&&1\ar@{-}[ldd]\\
\\
&1\ar@{-}[rrd]&&&& 1\ar@{-}[lld]\\
&&&2&
}
$$
respectively.

The $9$-periodic walk
$\left(
\frac{1}{0},\,
\frac{1}{1},\,
\frac{1}{2},\,
\frac{1}{3},\,
\frac{0}{1},\,
\frac{-1}{-1},\,
\frac{-2}{-3},\,
\frac{-1}{-2},\,
\frac{0}{-1}\right)$
generates the quiddity of the following 
$3d$-dissection of a nonagon:
$$
\xymatrix @!0 @R=0.32cm @C=0.45cm
 {
&&&1\ar@{-}[lld]\ar@{-}[rrd]&
\\
&2\ar@{-}[ldd]\ar@{-}[rrrr]&&&& 2\ar@{-}[rdd]\\
\\
2\ar@{-}[dd]\ar@{-}[rrdddd]&&&&&&2\ar@{-}[dd]\ar@{-}[lldddd]\\
\\
1\ar@{-}[rrdd]&&&&&&1\ar@{-}[lldd]\\
\\
&&2\ar@{-}[rr]&& 2&
}
$$
\end{ex}

We can say that while walking on a triangulated $n$-gon, the ``invisible hand'' draws
a $3d$-dissection of an $N$-gon with $N>n$.

\subsection{The quiddity of a $3d$-dissection from a Farey walk}\label{RecQWSec}

Given an $n$-gon,
$\left(\frac{1}{0},\frac{r_1}{s_1},\ldots,\frac{r_{n-1}}{s_{n-1}},\frac{0}{1}\right)$,
and an $N$-(anti)periodic walk on it~$\big(\frac{r_{i_{j}}}{s_{i_{j}}}\big)_{j\in\Z}$,
it is natural to ask, how to recover the sequence $(c_1,\ldots,c_N)$
which is a solution of the equation $M(c_1,\ldots,c_N)=\pm\Id$.
The answer is as follows.

The integer $c_{i_j}$ counts the number of triangles in the $n$-gon that lie
on the positive side of the walk,
$$
\xymatrix @!0 @R=0.45cm @C=0.55cm
{
&&&
\frac{r_{i_{j+1}}}{s_{i_{j+1}}}\ar@{-}[lllddd]\ar@{<-}@<-3pt>@*{[red]}[lllddd]
&\\
&&&&&&
\\
&&&&&&\\
\frac{r_{i_{j}}}{s_{i_{j}}}\ar@{<-}@<-3pt>@*{[red]}[rrrddd]\ar@{-}[rrrddd]\ar@{-}[rddd]\ar@{-}[rrrrd]^{}
\ar@{-}[rrrru]\ar@{-}[ruuu]
&&&&  \vdots
\\
&&&&&&
\\&&&&&&
\\
&
&&\frac{r_{i_{j-1}}}{s_{i_{j-1}}}
&
}
$$
with respect to the orientation of the hyperbolic plane.
This is a quiddity of a $3d$-dissection of an $N$-gon, as follows from
Theorem~\ref{SecondMainThm}.

\section{PPP: Ptolemy,  Pl\"ucker and Pfaff}\label{PPPSec}

In this section we prove that every solution
$(c_1,\ldots,c_n)$ of the equation $M(c_1,\ldots,c_n)=\pm\Id$, with $c_i$ positive integers,
defines a certain labeling of the diagonals of a convex $n$-gon:
$$
x:V\times{}V\to\Z,
$$
where $V$ is the set of vertices of the $n$-gon,
usually identified with $\{1,\ldots,n\}$.
Moreover, the set of integers~$x_{i,j}$,
satisfies the Ptolemy-Pl\"ucker relations.
In this sense, the results discussed in Section~\ref{CFPtoRuSec}
are still valid in the case where no continued fraction is defined.
Note that the totally positive solutions are in a one-to-one correspondence
with the labelings where all~$x_{i,j}$ are positive integers.
For an arbitrary solution,
one can only guarantee that the shortest diagonals are labeled by positive integers.

This section contains the proofs of the main results.
Our main tool is the well-known polynomial called (Euler's) continuant.
This is the determinant of a tridiagonal matrix, it gives an explicit formula for the entries
of the matrices $M(c_1,\ldots,c_n)$.
The Ptolemy-Pl\"ucker relations are deduced from
the Euler identity for the continuants.

We conclude the section with the similar ``Pfaffian formulas'' for the trace
$\tr\!\!(M(c_1,\ldots,c_n))$ recently obtained in~\cite{CoOv}.
The proof is more technical and we do not dwell on it.

\subsection{Continuant $=$ ``continued fraction determinant''}\label{ContSec}
The material of this subsection is classical.

Let us think of $(c_1,\ldots,c_n)$ as formal commuting variables, and
consider the negative continued fraction:
\begin{equation}
\label{FinCFEq}
\frac{r_n}{s_n}
=\llbracket{}c_1,\ldots,c_n\rrbracket,
\end{equation}
here and below~$n\geq1$.
Then both the numerator and the denominator are certain polynomials in $c_i$.
It turns out that these polynomials are basically the same.

\begin{defn}
The tridiagonal determinant 
$$
K_{n}(c_1,\ldots,c_n):=
\det\left(
\begin{array}{cccccc}
c_1&1&&&\\[4pt]
1&c_{2}&1&&\\[4pt]
&\ddots&\ddots&\!\!\ddots&\\[4pt]
&&1&c_{n-1}&\!\!\!\!\!1\\[4pt]
&&&\!\!\!\!\!1&\!\!\!\!c_{n}
\end{array}
\right)
$$
is called the {\it continuant}.
We also set for convenience~$K_0:=1$ and~$K_{-1}:=0$.
\end{defn}

The following statement is commonly known.
The proof is elementary and we give it for the sake of completeness.

\begin{prop}
\label{EulProp}
The numerator and the denominator of~\eqref{FinCFEq} are given by the continuants
\begin{equation}
\label{ContCFEq}
\left\{
\begin{array}{rcl}
r_n&=&K_{n}(c_1,\ldots,c_n),\\[4pt]
s_n&=&K_{n-1}(c_2,\ldots,c_n).
\end{array}
\right.
\end{equation}
\end{prop}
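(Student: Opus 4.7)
The plan is to prove both identities simultaneously by induction on $n$, the key algebraic input being the three-term recurrence satisfied by the continuant from its left end.

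First I would establish that $K_n$ satisfies
$$
K_n(c_1,\ldots,c_n) \;=\; c_1\,K_{n-1}(c_2,\ldots,c_n) \;-\; K_{n-2}(c_3,\ldots,c_n).
$$
This follows by cofactor expansion of the tridiagonal determinant along its first row: the only nonzero entries in that row are $c_1$ at position $(1,1)$ and $1$ at position $(1,2)$. The first contributes $c_1$ times the $(1,1)$-minor, which is literally $K_{n-1}(c_2,\ldots,c_n)$. The second contributes $-1$ times the $(1,2)$-minor, which is block upper-triangular: the first column below the top entry contains only a single $1$ (coming from the original $(2,1)$ entry), and deleting this row and column leaves $K_{n-2}(c_3,\ldots,c_n)$ in the lower-right block, while the upper-left is $1$. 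Hence the recurrence.

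The base case $n=1$ is immediate: $\llbracket c_1\rrbracket=c_1/1$, and by the conventions $K_0=1$, $K_1(c_1)=c_1$, so $r_1=K_1(c_1)$ and $s_1=K_0$.

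For the inductive step, I would use the definition of the negative continued fraction,
$$
\llbracket c_1,c_2,\ldots,c_n\rrbracket \;=\; c_1 - \frac{1}{\llbracket c_2,\ldots,c_n\rrbracket}.
$$
Applying the induction hypothesis to the $(n-1)$-term continued fraction gives $\llbracket c_2,\ldots,c_n\rrbracket=K_{n-1}(c_2,\ldots,c_n)/K_{n-2}(c_3,\ldots,c_n)$. Substituting and clearing denominators yields
$$
\llbracket c_1,\ldots,c_n\rrbracket \;=\; \frac{c_1\,K_{n-1}(c_2,\ldots,c_n) - K_{n-2}(c_3,\ldots,c_n)}{K_{n-1}(c_2,\ldots,c_n)},
$$
and the left-end recurrence above rewrites the numerator as $K_n(c_1,\ldots,c_n)$, while the denominator is already $K_{n-1}(c_2,\ldots,c_n)$, completing the induction.

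The only subtlety — and it is a very mild one — is that $r_n$ and $s_n$ must be interpreted as the specific polynomials obtained by this recursive unwinding, not as some reduced form; over $\Z[c_1,\ldots,c_n]$ this is the natural convention and matches the rows of $M(c_1,\ldots,c_n)$ as described in Proposition~\ref{MatConv}. In fact, if one prefers to avoid the question of what "numerator" and "denominator" mean, one can bypass it entirely via Proposition~\ref{MatConv}: the factorization $M(c_1,\ldots,c_n)=\bigl(\begin{smallmatrix}c_1 & -1\\ 1 & 0\end{smallmatrix}\bigr)M(c_2,\ldots,c_n)$ shows that the first column of $M(c_1,\ldots,c_n)$ obeys exactly the left-end continuant recurrence, with the correct initial values, so this column equals $\bigl(K_n(c_1,\ldots,c_n),\,K_{n-1}(c_2,\ldots,c_n)\bigr)^{t}$, which by Proposition~\ref{MatConv} is $(r_n,s_n)^{t}$.
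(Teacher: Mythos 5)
Your proof is correct, but it runs the induction from the opposite end of the continued fraction than the paper does. The paper's proof grows the fraction on the right: it observes that the numerators and denominators of the convergents $\frac{r_i}{s_i}=\llbracket c_1,\ldots,c_i\rrbracket$ both satisfy the three-term recurrence $V_{i+1}=c_{i+1}V_i-V_{i-1}$ (this is the content of Proposition~\ref{MatConv}), that the continuants satisfy the matching right-end recurrence $K_i(c_1,\ldots,c_i)=c_iK_{i-1}(c_1,\ldots,c_{i-1})-K_{i-2}(c_1,\ldots,c_{i-2})$, and that the initial conditions agree. You instead peel off the first coefficient, using the defining recursion $\llbracket c_1,\ldots,c_n\rrbracket=c_1-1/\llbracket c_2,\ldots,c_n\rrbracket$ together with the left-end cofactor expansion $K_n(c_1,\ldots,c_n)=c_1K_{n-1}(c_2,\ldots,c_n)-K_{n-2}(c_3,\ldots,c_n)$; both your recurrence and your induction are verified correctly. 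The trade-off: your route matches the recursive definition of the continued fraction directly and needs no discussion of convergents, while the paper's route identifies all the intermediate convergents $r_i/s_i$ along the way, which is what feeds into formula~\eqref{SLBisEq} for the full matrix $M(c_1,\ldots,c_n)$. Your closing remark about what ``numerator'' and ``denominator'' mean is well taken, and your fallback via the factorization $M(c_1,\ldots,c_n)=\left(\begin{smallmatrix}c_1&-1\\1&0\end{smallmatrix}\right)M(c_2,\ldots,c_n)$ is essentially the cleanest way to make the statement unambiguous; it is in the same spirit as the paper's appeal to Proposition~\ref{MatConv}.
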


\begin{proof}
Formula~(\ref{ContCFEq}) follows from the recurrence relation
\begin{equation}
\label{DEqEq}
V_{i+1}-c_{i+1}V_{i}+V_{i-1}=0,
\end{equation}
with (known) coefficients~$(c_i)_{i\in\Z}$ and (indeterminate) sequence~$(V_i)_{i\in\Z}$.
Let $\frac{r_i}{s_i}=\llbracket{}c_1,\ldots,c_i\rrbracket$ be a convergent 
of the continued fraction~\eqref{FinCFEq}, then
both sequences,~$(r_i)_{i\geq1}$ and~$(s_i)_{i\geq1}$ satisfy~(\ref{DEqEq}),
with the initial conditions
$(r_1,r_2)=(c_1,\,c_1c_2-1)$ and $(s_1,s_2)=(1,\,c_2)$.
Indeed, this is equivalent to Proposition~\ref{MatConv}.
On the other hand, the continuants satisfy
\begin{equation}
\label{DEqContEq}
K_{i}(c_1,\ldots,c_i)=c_iK_{i-1}(c_1,\ldots,c_{i-1})-K_{i-2}(c_1,\ldots,c_{i-2}).
\end{equation}
Hence the result.
\end{proof}

As a consequence of~(\ref{ContCFEq}), we obtain the following
formula for the entries of the matrix $M(c_1,\ldots,c_n)$:
\begin{equation}
\label{SLBisEq}
M(c_1,\ldots,c_n)
=\left(
\begin{array}{cc}
K_{n}(c_1,\ldots,c_n)&-K_{n-1}(c_1,\ldots,c_{n-1})\\[8pt]
K_{n-1}(c_2,\ldots,c_n)&-K_{n-2}(c_2,\ldots,c_{n-1})
\end{array}
\right).
\end{equation}
Indeed, $M(c_1,\ldots,c_n)$ is the matrix of convergents, cf. Section~\ref{MatConverSec}.

\subsection{The Euler identity for continuants}\label{ConEyltSec}

The polynomials $K_{n}(c_1,\ldots,c_n)$ were studied by Euler who proved the following identity
(see, e.g.,~\cite{Concr}).

\begin{thm}[Euler]
For $1\leq{}i\leq{}j<k\leq{}\ell\leq{}n$, one has
\label{EulerThm}
\begin{equation}
\begin{array}{l}
\label{EulIdEq}
K_{k-i}(c_i,\ldots,c_{k-1})K_{\ell-j}(c_{j+1},\ldots,c_{\ell})=\\[4pt]
\qquad
K_{j-i}(c_i,\ldots,c_{j-1})K_{\ell-i}(c_{i+1},\ldots,c_{\ell})+
K_{\ell-i+1}(c_i,\ldots,c_{\ell})K_{k-j-1}(c_{j+1},\ldots,c_{k-1}).
\end{array}
\end{equation}
\end{thm}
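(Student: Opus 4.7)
The plan is to realize each of the six continuants in Euler's identity as a $2\times 2$ Wronskian of two solutions to the recurrence $V_{t+1}=c_{t+1}V_t-V_{t-1}$ governing continuants, and then to deduce the identity from the classical rank-$2$ Plücker relation for $2\times 2$ determinants.

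First I would encode continuants as Wronskians. As already used in the proof of Proposition~\ref{EulProp}, if $(V_t)_{t\in\Z}$ satisfies the above three-term recurrence, then the entries of the matrix $M(c_1,\ldots,c_n)$ are continuants. Fix two linearly independent solutions $(u_t)_{t\in\Z}$ and $(w_t)_{t\in\Z}$ of this recurrence, normalized so that the Wronskian is $1$, and set $v_t:=\binom{u_t}{w_t}\in\Z^2$. A direct induction on $b-a\geq -1$, applying the recurrence to the second column, shows that
\[
\det(v_a\,|\,v_b)\,=\,K_{b-a-1}(c_{a+2},\ldots,c_{b}),
\]
with the boundary conventions $K_0=1$ and $K_{-1}=0$. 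Choosing the four indices $p:=i-2$, $q:=j-1$, $r:=k-1$, $s:=\ell$, the hypothesis $i\leq j<k\leq\ell$ yields $p<q<r<s$, and every continuant in Euler's identity is recognized as one of the six pairwise Wronskians $\det(v_\alpha\,|\,v_\beta)$ for $\alpha,\beta\in\{p,q,r,s\}$.

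Second I would apply the rank-$2$ Plücker identity: for any four vectors $v_p,v_q,v_r,v_s$ in a two-dimensional space,
\[
\det(v_p\,|\,v_r)\,\det(v_q\,|\,v_s)\,=\,\det(v_p\,|\,v_q)\,\det(v_r\,|\,v_s)+\det(v_p\,|\,v_s)\,\det(v_q\,|\,v_r),
\]
which follows by pairing the elementary three-term vector identity $\det(v_p|v_q)\,v_r-\det(v_p|v_r)\,v_q+\det(v_q|v_r)\,v_p=0$ with $v_s$. Substituting the Wronskian-to-continuant dictionary of step one converts this into Euler's identity. Degenerate cases (for instance $i=j$, or $k=j+1$) make one of the continuants empty, but are handled automatically by the conventions $K_0=1$ and $K_{-1}=0$.

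The main obstacle is purely the bookkeeping of step one: the shifts $p=i-2$ and $s=\ell$, and the orientation of each $\det(v_a|v_b)$ versus $\det(v_b|v_a)$, must be calibrated so that the six continuants in Euler's identity align with the six factors of the Plücker quadratic. An alternative route that bypasses auxiliary vectors is to apply the Desnanot--Jacobi (Dodgson condensation) determinant identity directly to the tridiagonal matrix whose determinant is $K_{\ell-i+1}(c_i,\ldots,c_\ell)$: suitable corner deletions of rows and columns produce exactly the required six continuants, and Desnanot--Jacobi (itself a consequence of the same Plücker relation) concludes the argument.
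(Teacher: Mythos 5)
Your proof is correct and is essentially the paper's own argument (due to Ustinov): the paper forms the $4\times4$ skew-symmetric matrix of these six continuants, observes it has rank $2$, and concludes that its Pfaffian vanishes --- which is exactly the three-term Pl\"ucker relation you invoke. Your version merely makes the rank-$2$ structure explicit by exhibiting the vectors $v_t$ and proving $\det(v_a\,|\,v_b)=K_{b-a-1}(c_{a+2},\ldots,c_b)$, a detail the paper leaves to the reader, so the two proofs are the same in substance.

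One concrete point deserves attention: with your dictionary and $p=i-2$, $q=j-1$, $r=k-1$, $s=\ell$, the factor $\det(v_r\,|\,v_s)$ comes out as $K_{\ell-k}(c_{k+1},\ldots,c_\ell)$, whereas the displayed identity in the paper has $K_{\ell-i}(c_{i+1},\ldots,c_\ell)$ in that position. The version your argument produces is the correct one --- the printed factor fails already for $i=j=1$, $k=\ell=2$, where the left-hand side is $c_1c_2$ but the printed right-hand side is $c_2+c_1c_2-1$ --- so the discrepancy is a misprint in the statement rather than a gap in your proof; you should simply note that what you establish is the identity with $K_{\ell-k}(c_{k+1},\ldots,c_\ell)$.
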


We give here an elegant proof due to A. Ustinov~\cite{Ust}
that makes use of the Pfaffian of a skew-symmetric matrix.

\begin{proof}
Using the notation
$$
x_{i-1,j+1}:=K_{j-i+1}(c_i,\ldots,c_j),
$$
for $i<j$,
consider the $4\times4$ skew-symmetric matrix
$$
\Omega=
\left(
\begin{array}{cccc}
0&x_{i+1,j}&x_{i+1,k}&x_{i+1,\ell}\\[4pt]
-x_{i+1,j}&0&x_{j,k}&x_{j,\ell}\\[4pt]
-x_{i+1,k}&-x_{j,k}&0&x_{k,\ell}\\[4pt]
-x_{i+1,\ell}&-x_{j,\ell}&-x_{k,\ell}&0
\end{array}
\right).
$$
It readily follows from the recurrence relation~\eqref{DEqContEq}, that the matrix
$\Omega$ has rank~$2$.
Hence
$$
\det(\Omega)=\left(
x_{i+1,j}x_{k,\ell}+x_{i+1,\ell}x_{j,k}-x_{i+1,k}x_{j,\ell}
\right)^2=0,
$$
which is precisely~\eqref{EulIdEq}.
\end{proof}

\begin{rem}
a)
Formula~(\ref{ContCFEq}) allows one to work with continued fractions with
$c_i$ assigned to concrete numbers (integers, real, complex, etc.), even when
the ``naive'' expression~(\ref{FinCFEq}) is not well-defined.
This may happen when some denominators vanish, for instance if several consecutive coefficients
$c_i,c_{i+1},\ldots$ are equal to $1$.

b)
Replacing the negative continued fraction by a regular 
one:~$\frac{r_n}{s_n}=[a_1,\ldots,a_n]$, where~$n$ can be even or odd,
formula~(\ref{ContCFEq}) is replaced by a similar formula with the only difference that
the continuant is replaced by the determinant
$$
K^+_{n}(a_1,\ldots,a_n):=
\det\left(
\begin{array}{cccccc}
a_1&1&&&\\[4pt]
\!-1&a_{2}&1&&\\[4pt]
&\ddots&\ddots&\!\!\ddots&\\[4pt]
&&-1&a_{n-1}&\!\!\!\!\!1\\[4pt]
&&&\!\!\!\!-1&\!\!\!\!a_{n}
\end{array}
\right),
$$
also known under the name of continuant.

c)
The continuants enjoy many remarkable properties (some of which are listed in~\cite{Concr,BR1,CoOv}).
They were already known to Euler
who thoroughly studied the polynomials $K_n$
and, in particular, established Ptolemy-type identities for them.
In a sense, the continuants establish a relationship between the continued fractions and projective geometry;
see~\cite{SVRS} and references therein.

d)
Let us mention that equation~(\ref{DEqEq}) is called the discrete Sturm-Liouville, Hill, or Schr\"odinger equation.
It plays an important role in many areas of algebra, analysis and mathematical physics.
When the sequence of coefficients is periodic:
$c_{i+n}=c_{i}$, for all $i$, there is a notion of monodromy matrix of~(\ref{DEqEq}),
which is nothing else than the matrix~$M(c_1,\ldots,c_n)$.
\end{rem}

\subsection{Ptolemy-Pl\"ucker relations}\label{PtoSec}
We are ready to explain the connection between solutions of
the equation $M(c_1,\ldots,c_n)=\pm\Id$
and the Ptolemy-Pl\"ucker relations.

Let $x_{i,j}$, where $i,j\in\{1,\ldots,n\}$,
be a set of $n^2$ formal (commuting) variables.
In order to have a clear combinatorial picture, and following~\cite{FZ1},
we will always think of an $n$-gon with the vertices cyclically ordered by $\{1,\ldots,n\}$,
and the diagonals $(i,j)$ labeled by $x_{i,j}$.

We call the {\it Ptolemy-Pl\"ucker relations} the following system of equations
\begin{equation}
\label{PPCompleteEqBis}
\left\{
\begin{array}{rcl}
x_{i,j}\,x_{k,\ell}&=&x_{i,k}\,x_{j,\ell}+x_{i,\ell}\,x_{k,j},
\qquad
i\leq{}k\leq{}j\leq{}\ell,\\[4pt]
x_{i,i}&=&0,
\\[4pt]
x_{i,i+1}&=&1.
\end{array}
\right.
\end{equation}
We will distinguish two special cases, where the set of variables~$x_{i,j}$ is either symmetric,
or skew-symmetric:
$$
x_{i,j}=x_{j,i},
\qquad\hbox{or}\qquad
x_{i,j}=-x_{j,i}.
$$

The following statement arose as an attempt to interpret some of the results
of~\cite{Cox} (see also~\cite{SVRS}).

\begin{thm}
\label{PPCompleteThm}
Let $(c_1,\ldots,c_n)$ be positive integers. The system \eqref{PPCompleteEqBis} together with the symmetry condition $x_{i,j}=x_{j,i}$ has a unique solution such that  $x_{i-1,i+1}=c_{i}$ 
 if and only if
one has $M(c_1,\ldots,c_n)=-\Id$.
\end{thm}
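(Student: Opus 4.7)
My plan is to use a specific Ptolemy relation to derive a three-term recurrence on the rows of the matrix $(x_{p,q})$, forcing $x_{p,q}$ to be a continuant; once this is established, the three wraparound conditions translate into three continuant identities which, together with the automatic identity $\det M(c_1,\ldots,c_n)=1$, match the four entries of $M(c_1,\ldots,c_n)=-\Id$ via~\eqref{SLBisEq}.

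First, I would apply the Ptolemy-Pl\"ucker relation with $(i,k,j,\ell)=(p,q-1,q,q+1)$ for $1\le p<q-1\le n-2$. Using the symmetry condition together with the prescribed values $x_{q-1,q}=x_{q,q+1}=1$ and $x_{q-1,q+1}=c_q$, this collapses to the three-term recurrence
\[
x_{p,q+1}=c_q\,x_{p,q}-x_{p,q-1}.
\]
Combined with the initial values $x_{p,p}=0$ and $x_{p,p+1}=1$, this forces
\[
x_{p,q}=K_{q-p-1}(c_{p+1},\ldots,c_{q-1})\qquad\text{for all }1\le p<q\le n.
\]
This already settles the uniqueness part of the statement: any symmetric solution of~\eqref{PPCompleteEqBis} with the prescribed boundary values must coincide with these continuants.

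For the direction $(\Leftarrow)$, assuming $M(c_1,\ldots,c_n)=-\Id$, I would \emph{define} $x_{p,q}$ for $p<q$ by the continuant formula above and extend by symmetry. The local conditions are immediate from $K_{-1}=0$, $K_0=1$, $K_1(c)=c$, and the Ptolemy-Pl\"ucker relations on non-wraparound quadruples follow directly from Euler's identity (Theorem~\ref{EulerThm}). The wraparound conditions ($x_{n,1}=1$, $x_{n,2}=c_1$, $x_{n-1,1}=c_n$) amount to the three continuant identities
\[
K_{n-2}(c_2,\ldots,c_{n-1})=1,\quad K_{n-3}(c_3,\ldots,c_{n-1})=c_1,\quad K_{n-3}(c_2,\ldots,c_{n-2})=c_n,
\]
which follow from~\eqref{SLBisEq} applied, respectively, to $M(c_1,\ldots,c_n)=-\Id$ and to the partial products $M(c_2,\ldots,c_n)=M_1^{-1}(-\Id)$ and $M(c_1,\ldots,c_{n-1})=(-\Id)\,M_n^{-1}$.

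Conversely, for $(\Rightarrow)$ I would run the same argument backwards: by the first step the labeling must be the continuant labeling, and the wraparound conditions yield the three displayed identities. Combined with $\det M(c_1,\ldots,c_n)=1$ (automatic since $M\in\SL(2,\Z)$), the continuant recurrence then delivers $K_{n-1}(c_1,\ldots,c_{n-1})=K_{n-1}(c_2,\ldots,c_n)=0$ and $K_n(c_1,\ldots,c_n)=-1$, which by~\eqref{SLBisEq} are exactly the remaining entries of $M(c_1,\ldots,c_n)=-\Id$. The main subtlety I anticipate lies in the existence direction: while the non-wraparound Ptolemy relations reduce directly to Euler's identity, the wraparound relations (for instance for the quadruple $(n-1,n,1,2)$) require a separate check. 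A clean way to dispose of them is to exploit the fact that $M(c_1,\ldots,c_n)=-\Id$ is preserved under cyclic conjugation, so that $M(c_k,c_{k+1},\ldots,c_{k-1})=-\Id$ for every $k$; each wraparound Ptolemy relation then becomes, after a cyclic relabeling of indices, an instance of Euler's identity already covered by Theorem~\ref{EulerThm}.
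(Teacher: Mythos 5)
Your proof is correct and follows essentially the same route as the paper: the Ptolemy--Pl\"ucker relation on quadruples $(p,q-1,q,q+1)$ forces the continuant labeling $x_{p,q}=K_{q-p-1}(c_{p+1},\ldots,c_{q-1})$ (this is exactly the paper's Lemma~\ref{CoxLem}), and the equivalence with $M(c_1,\ldots,c_n)=-\Id$ is then read off from~\eqref{SLBisEq} together with Euler's identity (Theorem~\ref{EulerThm}). Your explicit handling of the three wraparound conditions combined with $\det M(c_1,\ldots,c_n)=1$ is a careful elaboration of the step the paper compresses into ``one readily gets''.
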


\begin{proof}
We use the following lemma.

\begin{lem}
\label{CoxLem}
If $(x_{i,j})$ satisfies 
 the Ptolemy-Pl\"ucker relations \eqref{PPCompleteEqBis} then for all $i\leq{}j$
one has
\begin{equation}
\label{LabEq}
x_{i-1,j+1}=
\det\left(
\begin{array}{cccccc}
c_i&1&&&\\[4pt]
1&c_{i+1}&1&&\\[4pt]
&\ddots&\ddots&\!\!\ddots&\\[4pt]
&&1&c_{j-1}&\!\!\!\!\!1\\[4pt]
&&&\!\!\!\!\!1&\!\!\!\!c_{j}
\end{array}
\right)=K_{j-i+1}(c_i,\ldots,c_j).
\end{equation}
where $c_{i}=x_{i-1,i+1}$. 
\end{lem}

\begin{proof}
We proceed by induction on $j$.

The induction base is~$x_{i-1,i+1}=c_i=K_1(c_i)$ by definition, and the following calculation of~$x_{i-1,i+2}$.
$$
 \xymatrix@!0 @R=1.5cm @C=2cm
 {
i-1\ar@{-}[rrd]^<<<<<<<<<<<{c_{i}}\ar@{-}[rd]_{1}&&&
i+2\ar@{-}[lld]_<<<<<<<<<<<{c_{i+1}}\ar@{-}[ld]^{1}\ar@{-}[lll]_{x_{i-1,i+2}}\\
&i&i+1\ar@{-}[l]^{1}
}
$$
The Ptolemy-Pl\"ucker relation reads
$c_{i}c_{i+1}=x_{i-1,i+2}+1$,
hence $x_{i-1,i+2}=K_2(c_{i},c_{i+1})$.

The induction step consists of expanding the determinant of  \eqref{LabEq} with respect to the last column and compare with the Ptolemy-Pl\"ucker relation given by the diagram

$$
 \xymatrix@!0 @R=1.5cm @C=2cm
 {
i-1\ar@{-}[rrd]^<<<<<<<<<<<<<<{x_{i-1,j}}\ar@{-}[rd]_{x_{i-1,j-1}}&&&
j+1\ar@{-}[lld]_<<<<<<<<<<<<<<{c_{j}}\ar@{-}[ld]^{1}\ar@{-}[lll]_{x_{i-1,j+1}}\\
&j-1&j\ar@{-}[l]^{1}
}.
$$
Both relations are equivalent to~\eqref{DEqContEq}.
Hence the lemma.
\end{proof}

Let us show that the Ptolemy-Pl\"ucker relations imply $M(c_1,\ldots,c_n)=-\Id$.
Applying Lemma~\ref{CoxLem} to the case $\vert j-i\vert=n-1$, and using the cyclic numeration
of the vertices of the $n$-gon, we get
$$
x_{i,i-1}=K_{n}(x_i,\ldots,x_{i+n-1})=1,
$$
provided $x_{i,j}=x_{j,i}$.
Then, again implying~\eqref{PPCompleteEqBis}, one readily gets 
$$
K_{n+1}(x_i,\ldots,x_{i+n})=0,
\qquad
K_{n+2}(x_i,\ldots,x_{i+n+1})=-1.
$$
We conclude by~\eqref{SLBisEq}, that $M(c_1,\ldots,c_n)=-\Id$.

Conversely, assume that $M(c_1,\ldots,c_n)=-\Id$.
Starting from $x_{i,i}=0$, $x_{i,i+1}=1$ and then labeling
the diagonals of the $n$-gon using~\eqref{LabEq} one obtains a solution
of  \eqref{PPCompleteEqBis} using the Euler identities \eqref{EulerThm} for the continuants.
\end{proof}

A similar computation (that we omit) allows one to prove the following skew-symmetric
counterpart of Theorem~\ref{PPCompleteThm}.

\begin{thm}
\label{PPCompleteThmBis}
Let $(c_1,\ldots,c_n)$ be positive integers. The system \eqref{PPCompleteEqBis} together with the skew-symmetry condition $x_{i,j}=-x_{j,i}$ has a unique solution such that  $x_{i-1,i+1}=c_{i}$ 
 if and only if
$M(c_1,\ldots,c_n)=\Id$.
\end{thm}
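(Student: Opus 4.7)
The strategy is to mirror the proof of Theorem~\ref{PPCompleteThm}, tracking the sign changes induced by replacing the symmetric condition $x_{i,j}=x_{j,i}$ with the skew-symmetric one $x_{i,j}=-x_{j,i}$.

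The first observation is that Lemma~\ref{CoxLem} is valid without any symmetry assumption on the labels: its inductive proof uses only the Ptolemy--Pl\"ucker identity~\eqref{PPCompleteEqBis} applied to the quadrilateral $(i-1,j-1,j,j+1)$, together with the boundary data $x_{i,i+1}=1$ and $x_{i-1,i+1}=c_i$. Consequently, any solution of~\eqref{PPCompleteEqBis} satisfying the quiddity prescription satisfies $x_{i-1,j+1}=K_{j-i+1}(c_i,\ldots,c_j)$ for all admissible $i\le j$, whether it is symmetric or skew-symmetric.

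For the "only if" direction, I would run the same cyclic wrap-around argument as in Theorem~\ref{PPCompleteThm}: applying Lemma~\ref{CoxLem} with $|j-i|$ taken equal to $n-1$, $n$ and $n+1$ yields three continuant identities whose right-hand sides are fixed by the cyclic side conditions. In the symmetric proof the identification $x_{i,i-1}=x_{i-1,i}=1$ produced the pattern of values matching the four entries of $-\Id$ in~\eqref{SLBisEq}. Under skew-symmetry the same identification becomes $x_{i,i-1}=-x_{i-1,i}=-1$, and the propagation of this single sign flip through the cyclic argument changes the pattern of values so as to match instead the four entries of $+\Id$ in~\eqref{SLBisEq}. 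Thus $M(c_1,\ldots,c_n)=\Id$.

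For the converse, starting from $M(c_1,\ldots,c_n)=\Id$, I would define $x_{i,j}$ for $1\le i<j\le n$ by the continuant formula of Lemma~\ref{CoxLem}, set $x_{i,i}:=0$, and extend by $x_{j,i}:=-x_{i,j}$. The Ptolemy--Pl\"ucker relations~\eqref{PPCompleteEqBis} are then verified exactly as in the symmetric case, via Euler's identity for continuants (Theorem~\ref{EulerThm}). Uniqueness is immediate from the continuant recurrence~\eqref{DEqContEq}, which determines every $x_{i,j}$ from the side and quiddity data.

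The main technical obstacle is the careful sign bookkeeping in the cyclic wrap-around step: one must check that the substitution of skew-symmetry for symmetry at every cyclic identification flips precisely the right entries of~\eqref{SLBisEq} to convert the $-\Id$ pattern into the $+\Id$ pattern, rather than into some other signed identity.
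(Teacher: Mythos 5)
Your proposal follows exactly the route the paper intends: the paper gives no proof of this theorem beyond the remark that ``a similar computation'' to that of Theorem~\ref{PPCompleteThm} works, and your outline --- Lemma~\ref{CoxLem} is symmetry-independent, the sign enters only at the cyclic wrap-around where $x_{i,i-1}=-x_{i-1,i}=-1$ turns the $-\Id$ pattern of~\eqref{SLBisEq} into $+\Id$, and the converse runs on Euler's identity for continuants --- is precisely that computation. The only piece left undone is the sign bookkeeping you flag yourself (including that the cyclic conditions $x_{n,2}=c_1$ and $x_{n-1,1}=c_n$ now force some short diagonals to carry label $-c_i$ in the increasing-index ordering), which the paper omits as well.
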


\begin{rem}
Let us mention that the coordinates $x_{i,j}$ satisfying~(\ref{PPCompleteEqBis}) together with the
skew-symmetry condition can be identified with
the Pl\"ucker coordinates of the Grassmannian~$G_{2,n}$ of $2$-dimensional subspaces
in the $n$-dimensional vector space.
The coordinate ring of~$G_{2,n}$ is one of the basic examples
of cluster algebras of Fomin and Zelevinsky~\cite{FZ} (for details; see~\cite{FZ1}).
A description of the relationship between Coxeter's frieze patterns and cluster algebras can be found
in~\cite{Sop}.
\end{rem}

\subsection{Relation to $3d$-dissections}\label{Back3dSec}
So far in this section we considered formal variables $x_{i,j}$.
Assigning concrete integral values to these variables, one has to deal with integer solutions of the equation
$M(c_1,\ldots,c_n)=\pm\Id$.
In particular, Theorems~\ref{SecondMainThm},~\ref{PPCompleteThm} 
and~\ref{PPCompleteThmBis} 
 imply the following relation to $3d$-dissections (see Section~\ref{MoySec}).

\begin{cor}
Given an $n$-tuple of positive integers $(c_1,\ldots,c_n)$, start labeling the diagonals of an $n$-gon
by 
$$
x_{i,i}=0,
\qquad 
x_{i,i+1}=1,
\qquad
x_{i-1,i+1}=c_i,
$$
for all $1\leq{}i\leq{}n$,
and then continue using the Ptolemy-Pl\"ucker relations.
This procedure is consistent, and there exists a set of integers $x_{i,j}$ 
and satisfying~(\ref{PPCompleteEqBis}), if and only if $(c_1,\ldots,c_n)$ is a quiddity of a $3d$-dissection.
\end{cor}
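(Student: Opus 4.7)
The plan is to reduce the corollary to the already-established Theorems~\ref{SecondMainThm}, \ref{PPCompleteThm}, and~\ref{PPCompleteThmBis}, noting that integrality is automatic via Lemma~\ref{CoxLem}.

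\textbf{The ``if'' direction.} Suppose $(c_1,\ldots,c_n)$ is the quiddity of a $3d$-dissection. Theorem~\ref{SecondMainThm}(i) gives $M(c_1,\ldots,c_n)=\pm\Id$. Depending on the sign, Theorem~\ref{PPCompleteThm} (for the symmetric case with sign~$-$) or Theorem~\ref{PPCompleteThmBis} (for the skew-symmetric case with sign~$+$) produces a solution of~\eqref{PPCompleteEqBis} extending the boundary data $x_{i,i}=0$, $x_{i,i+1}=1$, $x_{i-1,i+1}=c_i$. Consequently the iterative procedure based on the Ptolemy-Pl\"ucker relations is consistent. The resulting labels are integers because by Lemma~\ref{CoxLem} they are continuants $K_{j-i+1}(c_i,\ldots,c_j)$ evaluated at the integers $c_i$, and continuants are polynomials with integer coefficients.

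\textbf{The ``only if'' direction.} Conversely, assume the procedure yields a consistent integer labeling $x_{i,j}$. By Lemma~\ref{CoxLem}, applied inductively starting from the given short-diagonal data, every label necessarily takes the form $x_{i-1,j+1}=K_{j-i+1}(c_i,\ldots,c_j)$. The closing-up of the labeling around the cyclic $n$-gon forces the continuants computed ``the long way round'' to agree with the boundary conditions $x_{i,i}=0$ and $x_{i,i+1}=1$. Via formula~\eqref{SLBisEq}, these closure identities are precisely the entrywise statement $M(c_1,\ldots,c_n)=\pm\Id$. Theorem~\ref{SecondMainThm}(ii) then yields that $(c_1,\ldots,c_n)$ is the quiddity of a $3d$-dissection.

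\textbf{Main obstacle.} The delicate step is the precise translation of ``consistency'' into the matrix identity. Since the relations~\eqref{PPCompleteEqBis} do not on their own specify symmetric versus skew-symmetric behavior of $(x_{i,j})$, one must carry out the cyclic closure with care, distinguishing the two cases according to the sign in $M(c_1,\ldots,c_n)=\pm\Id$, and verifying that the constraints reduce exactly to the continuant identities encoded by~\eqref{SLBisEq}, so that no further conditions emerge beyond those already covered by Theorems~\ref{PPCompleteThm} and~\ref{PPCompleteThmBis}.
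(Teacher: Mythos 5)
Your proposal is correct and follows exactly the route the paper intends: the paper states this corollary without proof, as an immediate consequence of Theorems~\ref{SecondMainThm}, \ref{PPCompleteThm} and~\ref{PPCompleteThmBis}, and your reduction to those results (with integrality supplied by Lemma~\ref{CoxLem} and the cyclic closure translated into $M(c_1,\ldots,c_n)=\pm\Id$ via~\eqref{SLBisEq}) is precisely that argument, spelled out in slightly more detail.
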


\subsection{Traces and Pfaffians}\label{TPfSec}

Let us give one more determinant formula.
We are interested in calculating the trace of the matrix $M(c_1,\ldots,c_n)$.
It follows from~(\ref{SLBisEq}) that this trace is equal to the difference of two
continuants:
$$
\tr\!\!(M(c_1,\ldots,c_n))=
K_{n}(c_1,\ldots,c_n)-K_{n-2}(c_2,\ldots,c_{n-1}).
$$

It turns out that the square of this polynomial is equal to the determinant of
a $2n\times2n$ matrix.

\begin{thm}[\cite{CoOv,CoOv1}]
\label{CoOvThm}
The trace of the matrix~$M(c_1,\ldots,c_n)$ is equal to the square root of the
determinant of the following skew-symmetric $2n\times2n$ matrix
\begin{equation}
\label{TheOmEq}
\det\left(
\begin{array}{cccc|cccccc}
&&&\;\;1&c_1&1&\\[2pt]
&&&&1&c_2&1&\\
&&&&&\ddots&\ddots&\ddots\\
&&&&&&\ddots&\ddots&1\\
-1&&&&&&&1&c_{n}\\[2pt]
\hline
-c_1&-1&&&&&&&1\\
-1&\ddots&\!\!\!\!\!\!\ddots&&&&&&\\
&\ddots&\ddots&\\
&&&\;\;-1&&&&\\
&&-1&-c_{n}&\!\!-1&&
\end{array}
\right)
\quad=\quad
(\tr\!M(c_1,\ldots,c_n))^2.
\end{equation}
\end{thm}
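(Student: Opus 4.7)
The strategy is to prove that $\mathrm{Pf}(\Omega) = \pm\,\tr M(c_1,\ldots,c_n)$, which squares to give the claim since $\det\Omega = \mathrm{Pf}(\Omega)^2$ for any skew-symmetric $\Omega$. By formula \eqref{SLBisEq},
$$
\tr M(c_1,\ldots,c_n) = K_n(c_1,\ldots,c_n) - K_{n-2}(c_2,\ldots,c_{n-1}),
$$
so the problem reduces to computing $\mathrm{Pf}(\Omega)$ and identifying it with this specific difference of continuants.

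First I would rewrite $\Omega$ in block form
$$
\Omega = \begin{pmatrix} A & T \\ -T^t & B \end{pmatrix},
$$
where $T$ is the $n\times n$ tridiagonal continuant matrix with $c_i$ on the diagonal and $1$'s on the off-diagonals (so $\det T = K_n(c_1,\ldots,c_n)$), and $A, B$ are the rank-$2$ skew-symmetric ``corner twist'' matrices whose only nonzero entries are the $\pm 1$'s visible in \eqref{TheOmEq}. The unperturbed block matrix gives the base case
$$
\mathrm{Pf}\!\begin{pmatrix} 0 & T \\ -T^t & 0 \end{pmatrix} = \pm\det T = \pm K_n(c_1,\ldots,c_n),
$$
which accounts for the first term.

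Next I would incorporate the perturbations $A$ and $B$ using the rank-$2$ update rule for Pfaffians,
$$
\mathrm{Pf}(\Omega_0 + uv^t - vu^t) = \mathrm{Pf}(\Omega_0)\bigl(1 - u^t \Omega_0^{-1} v\bigr),
$$
applied successively. Because $A$ is supported on rows/columns $1$ and $n$ of the top block and $B$ on rows/columns $1$ and $n$ of the bottom block, the relevant entries of $\Omega_0^{-1}$ are the corner entries of $T^{-1}$. By Cramer's rule these are ratios of minors of $T$, and the specific minor obtained by deleting rows and columns $1, n$ from $T$ is precisely the tridiagonal continuant $K_{n-2}(c_2,\ldots,c_{n-1})$. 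Combining contributions, the expansion gives $\mathrm{Pf}(\Omega) = \pm\bigl(K_n(c_1,\ldots,c_n) - K_{n-2}(c_2,\ldots,c_{n-1})\bigr)$, and squaring yields \eqref{TheOmEq}.

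The main obstacle is the sign bookkeeping, and in particular ruling out spurious cross-terms when the $A$- and $B$-perturbations are applied together: one needs to verify either that the rank-$4$ interaction vanishes for support reasons or that it produces exactly the needed sign to give the \emph{minus} rather than plus in $K_n - K_{n-2}$. A cleaner route — presumably the one adopted in \cite{CoOv} — is to perform skew-symmetric block congruences $\Omega \mapsto U^t\Omega U$ with $U$ block upper-triangular and $\det U = 1$; such operations preserve $\mathrm{Pf}(\Omega)$ and can be used to annihilate $T$ from the off-diagonal blocks, reducing $\Omega$ to a block-diagonal skew form whose Pfaffian is read off directly. The identification with the continuant difference then follows from Euler's identity \eqref{EulIdEq}, exactly as in Ustinov's proof of Theorem~\ref{EulerThm}, generalized to the $2n\times 2n$ setting.
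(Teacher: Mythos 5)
First, a point of reference: the paper does not prove Theorem~\ref{CoOvThm} at all --- it explicitly defers to~\cite{CoOv} (``The proof is more technical and we do not dwell on it''), so there is no in-paper argument to compare yours against. Your overall strategy (compute $\mathrm{Pf}(\Omega)$, identify it with $\pm\big(K_n(c_1,\ldots,c_n)-K_{n-2}(c_2,\ldots,c_{n-1})\big)$, and square) is sound, and the block decomposition is the right one: $\Omega$ has off-diagonal blocks $\pm T$ with $T$ the tridiagonal continuant matrix, and \emph{both} diagonal blocks equal to the skew-symmetric matrix $A$ with $+1$ in position $(1,n)$ and $-1$ in position $(n,1)$. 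The base case $\mathrm{Pf}\big(\begin{smallmatrix}0&T\\-T&0\end{smallmatrix}\big)=\pm\det T=\pm K_n$ is correct.

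The one place where your sketch points in the wrong direction is the treatment of the two rank-$2$ perturbations. The ``rank-$4$ interaction'' is not a spurious cross-term to be ruled out for support reasons --- it is the \emph{entire source} of the second term $-K_{n-2}(c_2,\ldots,c_{n-1})$. Each perturbation applied alone changes nothing, because the diagonal blocks of $\big(\begin{smallmatrix}0&T\\-T&0\end{smallmatrix}\big)^{-1}$ vanish, so the quantity $v^t\Omega_0^{-1}u$ in your update formula is zero. Only after the top-left perturbation is inserted does the bottom-right one see a nonzero correction, coming from the new bottom-right block $T^{-1}AT^{-1}$ of the inverse:
\begin{equation*}
(T^{-1}AT^{-1})_{n,1}=(T^{-1})_{n,1}^2-(T^{-1})_{n,n}(T^{-1})_{1,1}
=\frac{1-K_{n-1}(c_1,\ldots,c_{n-1})\,K_{n-1}(c_2,\ldots,c_n)}{K_n^2}
=-\frac{K_{n-2}(c_2,\ldots,c_{n-1})}{K_n},
\end{equation*}
where the last equality is the determinant identity $\det M(c_1,\ldots,c_n)=1$ read off from~\eqref{SLBisEq} (a special case of, but not the full, Euler identity~\eqref{EulIdEq}). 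This gives $\mathrm{Pf}(\Omega)=\pm K_n\big(1-K_{n-2}/K_n\big)=\pm\tr M(c_1,\ldots,c_n)$ for generic $c_i$, hence identically since both sides of~\eqref{TheOmEq} are polynomials. So the skeleton is right and the argument does close up, but the step you defer as ``sign bookkeeping'' is where all the mathematics lives, and your proposed escape route of showing that the interaction vanishes would destroy the theorem rather than prove it.
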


In other words, $\tr\!\!(M(c_1,\ldots,c_n))$ is the Pfaffian of the matrix on the left-hand-side of~(\ref{TheOmEq}).
We refer to~\cite{CoOv} for a proof of this result.
Let us mention that
formula~(\ref{TheOmEq}) reflects a relation to symplectic geometry; see~\cite{CoOv1}.
More precisely, the $2n\times2n$ matrix in~(\ref{TheOmEq}) appears as the Gram matrix of
the symplectic form in the standard symplectic space evaluated on a Lagrangian configuration.
This relation deserves further investigation.

\begin{ex}
In the case $n=3$ one can easily check directly that
$$
\det\left(
\begin{array}{cccccc}
0&0&1&c_1&1&0\\[4pt]
0&0&0&1&c_2&1\\[4pt]
-1&0&0&0&1&c_3\\[4pt]
-c_1&-1&0&0&0&1\\[4pt]
-1&-c_2&-1&0&0&0\\[4pt]
0&-1&-c_3&-1&0&0
\end{array}
\right)
=\left(c_1c_2c_3-c_1-c_2-c_3\right)^2,
$$
which is nothing other than the square of the trace of $M(c_1,c_2,c_3)$.
\end{ex}

\begin{rem}
If one wants to check~\eqref{TheOmEq} with the computer
and forgets to put the minus sign,
here is what one will obtain for~$n\geq2$:
\begin{equation}
\label{TrRem}
\det\left(
\begin{array}{cccc|cccccc}
&&&\;\;1&c_1&1&\\[2pt]
&&&&1&c_2&1&\\
&&&&&\ddots&\ddots&\ddots\\
&&&&&&\ddots&\ddots&1\\
1&&&&&&&1&c_{n}\\[2pt]
\hline
c_1&1&&&&&&&1\\
1&\ddots&\!\!\!\!\!\!\ddots&&&&&&\\
&\ddots&\ddots&\\
&&&\;\;1&&&&\\[2pt]
&&1&c_{n}&\!\!1&&
\end{array}
\right) 
\quad=\quad 
(-1)^n\left((\tr\!M(c_1,\ldots,c_n))^2-4\right).
\end{equation}
Note that the expression in the right-hand-side of~\eqref{TrRem}
is  the discriminant of the characteristic polynomial of~$M(c_1,\ldots,c_n)$.
It will appear again in Section~\ref{ParamCCSec}.
\end{rem}

\section{Minimal presentation of $\PSL(2,\Z)$}\label{DecSec}

The group $\SL(2,\Z)$ (and thus $\PSL(2,\Z)$) is generated by two elements, and a standard 
choice of generators is either $\{S,R\}$, $\{S,U\}$, or $\{U,R\}$, where 
$$
S=\left(
\begin{array}{cc}
0&-1\\[4pt]
1&0
\end{array}
\right),
\qquad
R=\left(
\begin{array}{cc}
1&1\\[4pt]
0&1
\end{array}
\right),
\qquad
U=\left(
\begin{array}{cc}
1&-1\\[4pt]
1&0
\end{array}
\right).
$$
A natural question is how to make such a presentation canonical.

It is a simple and well-known fact that
every element $A\in\PSL(2,\Z)$
can be presented in the form $A=M(c_1,\ldots,c_k)$
where~$c_i$ are positive integers.
The above question is equivalent to the existence of a canonical presentation in this form.
This question was considered and answered (modulo conjugation of~$A$) in~\cite{HZ,Zag,Kat}.
The coefficients $(c_1,\ldots,c_k)$ are obtained as the (minimal) period of the negative
continued fraction of a fixed point of~$A$.
This fixed point is a quadratic irrationality.

We show the existence and uniqueness of the ``minimal presentation'', $A=M(c_1,\ldots,c_k)$, with $c_i\geq1$, 
and $k$ is the smallest possible.
The coefficients $(c_1,\ldots,c_k)$ of this presentation are calculated via expansion of a rational number
(the quotient of largest coefficients of~$A$).
This statement looks quite surprising since it recovers the period of a quadratic irrationality 
from a continued fraction of a rational.

\subsection{Parametrizing the conjugacy classes in $\PSL(2,\Z)$}\label{ParamCCSec}
Let us outline the history of the problem discussed in this section.
The matrices $M(c_1,\ldots,c_k)$,  with $c_i\geq2$ for every~$i$,
were used to parametrize conjugacy classes of
{\it hyperbolic elements} of~$\PSL(2,\Z)$ 
(recall that $A\in\PSL(2,\Z)$ is hyperbolic if~$\left|\tr\!A\right|\geq3$).

Consider the real projective line~$\RP^1$,
which is identified with~$\R\cup\{\infty\}$ by choosing an affine co\-ordinate~$x$.
The action of~$\PSL(2,\Z)$ on~$\RP^1$ is given by linear-fractional transformations, viz
$$
A=\begin{pmatrix}
a&b\\[2pt]
c&d
\end{pmatrix}:x
\,\to\;
\frac{ax+b}{cx+d}\,.
$$
When~$A$ is hyperbolic, it has two fixed points $x_\pm\in\RP^1$:
$$
x_\pm=\frac{a-d\pm\sqrt{(a+d)^2-4}}{2c}.
$$
Note that the expression $(\tr\!{}A)^2-4$ appeared in~\eqref{TrRem}.
When choosing the representative $A\in\PSL(2,\Z)$ with~$\tr\!A>0$,
the point~$x_+$ has the property that,
for all~$x\not=x_-$, $A^m(x)$ tends to~$x_+$, when~$m\to\infty$.
The point~$x_+$ is thus called the {\it attractive} fixed point of~$A$.

Since $x_\pm$ are quadratic irrationals,
the corresponding continued fractions are periodic (starting from some place) by Lagrange's theorem; see, e.g.,~\cite{Poo,Never}.
Consider the negative continued fraction expansion of 
the attractive fixed point:
$$
x_+=\left\llbracket{}c_1,\ldots,c_\ell,\,\overline{c_{\ell+1},\ldots,c_{\ell+k}}\right\rrbracket,
$$
where $(c_{\ell+1},\ldots,c_{\ell+k})$ is the minimal period of the continued fraction.

The statement explained in~\cite[pp.90--92]{Zag} can be formulated as follows.

\begin{prop}
\label{WNProp}
Every hyperbolic element $A\in\PSL(2,\Z)$ is conjugate to
$M(c_{\ell+1},\ldots,c_{\ell+k})$, and the $k$-tuple $(c_{\ell+1},\ldots,c_{\ell+k})$,
defined modulo cyclic permutations,
characterises the conjugacy class of~$A$ uniquely.
\end{prop}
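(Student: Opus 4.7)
The strategy is to work with the linear-fractional action of $\PSL(2,\Z)$ on $\RP^1$, under which the generator $\left(\begin{smallmatrix} c & -1 \\ 1 & 0 \end{smallmatrix}\right)$ acts by $x \mapsto c - 1/x$, so that $M(c_1,\ldots,c_n)$ implements the negative continued-fraction substitution on $\RP^1$. Setting $P := M(c_1,\ldots,c_\ell)$ and $A' := P^{-1}AP$, the attractor of $A'$ becomes the purely periodic quadratic irrational $y := \llbracket \overline{c_{\ell+1},\ldots,c_{\ell+k}}\rrbracket$, so it suffices to treat the purely periodic case; renaming, we may take $\ell=0$. Then $M := M(c_1,\ldots,c_k)$ fixes $x_+$ directly from the periodic expansion, and its second fixed point on $\RP^1$ is the Galois conjugate of $x_+$, which coincides with the repelling fixed point $x_-$ of $A$. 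Both $A$ and $M$ thus lie in the stabilizer of $x_+$ in $\PSL(2,\Z)$, an infinite cyclic group (discrete subgroup of the one-parameter group translating along the common axis). Letting $N$ denote its primitive generator with $x_+$ attracting, we get $A = N^a$ and $M = N^m$ for positive integers $a,m$.

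The main obstacle is to show $a = m$, which is precisely where the minimality of the period enters. The key classical fact is that the shift of a purely periodic continued fraction is realised by our matrices: $M(c_1)^{-1}$ acts on $\RP^1$ as $x \mapsto 1/(c_1-x)$ and sends $\llbracket \overline{c_1,c_2,\ldots,c_k}\rrbracket$ to $\llbracket \overline{c_2,\ldots,c_k,c_1}\rrbracket$, so by iteration $M^{-1} = M(c_k)^{-1}\cdots M(c_1)^{-1}$ realises the shift by exactly $k$ positions. Every element of the centralizer fixes $x_+$ and therefore implements a shift of the periodic sequence; by minimality of $k$, any such shift must be a positive integer multiple of $k$, so $M$ is primitive in the centralizer and $m=1$. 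When $A$ itself is primitive hyperbolic (the intended scope for the conjugacy classification of~\cite{HZ,Zag,Kat}), the analogous argument gives $a=1$, whence $A = M$ in $\PSL(2,\Z)$.

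Finally, uniqueness modulo cyclic permutation follows from the identity
$$M(c_1)^{-1}\,M(c_1, c_2, \ldots, c_k)\, M(c_1) = M(c_2, \ldots, c_k, c_1),$$
showing that cyclic shifts of the period yield conjugate matrices. Conversely, two conjugate hyperbolic elements have $\PSL(2,\Z)$-equivalent attractors, whose negative continued fractions eventually coincide by a classical theorem, so their minimal periods agree as cyclic words.
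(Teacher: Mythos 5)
First, a point of reference: the paper does not prove Proposition~\ref{WNProp} at all --- it presents it as a known statement and defers to \cite{HZ,Zag,Kat} (``The statement explained in [Zag, pp.90--92]\dots'', ``We refer to \cite{HZ,Zag} and \cite{Kat} for a detailed and very clear treatment''). So your proposal is being measured against the classical argument, whose skeleton you do reproduce correctly: reduce to the purely periodic case by conjugating with $M(c_1,\ldots,c_\ell)$, observe that $A$ and $M:=M(c_{\ell+1},\ldots,c_{\ell+k})$ both lie in the stabilizer of the quadratic irrational $x_+$, which is infinite cyclic, and then argue that $M$ is a generator. The gap is in that last step. You assert that ``every element of the centralizer fixes $x_+$ and therefore implements a shift of the periodic sequence,'' but the word ``therefore'' is carrying the entire theorem. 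The shift on negative continued fraction expansions is a \emph{piecewise}-M\"obius map (it acts as $M(c)^{-1}$ on the set where the first digit is $c$), whereas an arbitrary $B\in\PSL(2,\Z)$ fixing $x_+$ acts near $x_+$ as a single M\"obius transformation; there is no a priori reason it should coincide with an iterate of the shift. That it does --- equivalently, that $M(c_{\ell+1},\ldots,c_{\ell+k})$ built from one minimal period is the \emph{primitive} generator of the stabilizer --- is exactly the classical content here (it amounts to the statement that the continued fraction algorithm produces the fundamental automorph of the associated quadratic form, or, geometrically, that the period length equals the cutting sequence of the primitive closed geodesic in the Farey tessellation \`a la Series). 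This needs a genuine argument from reduction theory or from the geodesic coding, not just the observation that $M(c_1)^{-1}$ shifts purely periodic expansions.

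Two smaller remarks. You are right to flag the primitivity issue, and in fact it cuts against the statement itself: if $A=N^2$ with $N$ primitive hyperbolic, then $A$ and $N$ have the same attractive fixed point and hence the same minimal period, yet they are not conjugate (different traces); so as literally stated the proposition holds only for primitive $A$ (or one must repeat the period $a$ times for $A=N^a$). Your restriction is therefore a correct reading of the intended scope rather than a defect of your argument, but it should be said that the statement being proved is the primitive case. Finally, in ``Letting $N$ denote its primitive generator with $x_+$ attracting, we get $A=N^a$ and $M=N^m$ for positive $a,m$,'' you implicitly use that $x_+$ is the \emph{attracting} fixed point of $M$; this is true because the periodic part of a negative continued fraction has all entries $\geq 2$ and not all equal to $2$, so $x\mapsto\llbracket c_{\ell+1},\ldots,c_{\ell+k},x\rrbracket$ is a contraction near $x_+$ --- a one-line check worth including. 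The converse uniqueness argument via a Serret-type theorem for negative continued fractions is fine modulo the citation.
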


We refer to~\cite{HZ,Zag} and~\cite{Kat} for a detailed and very clear treatment of
this statement and its applications.

\begin{ex}
\label{CZKEx}
Consider the matrix
$
A=\begin{pmatrix}
10&3\\[2pt]
3&1
\end{pmatrix},
$
whose attractive fixed point is
$x_+=\frac{3+\sqrt{13}}{2}$.
It's continued fraction expansion reads
$x_+=
[3,\overline{3}]=
\left\llbracket{}4,\,\overline{2,\,2,\,5}\right\rrbracket.
$
According to Proposition~\ref{WNProp}, the matrix~$A$ must be conjugate to
$$
M(2,\,2,\,5)=\begin{pmatrix}
13&-3\\[2pt]
9&-2
\end{pmatrix},
$$
and, indeed, one checks that
$A=M(4)\,M(2,\,2,\,5)\,M(4)^{-1}.$
\end{ex}

Proposition~\ref{WNProp} and its impact for number theory,
see, e.g.,~\cite{DIT} and references therein,
is the main motivation for us to study minimal presentations of elements of~$\PSL(2,\Z)$.
When representing a matrix, the condition~$c_i\geq2$, for all $i$,
cannot always be satisfied
(many interesting matrices need $1$'s at the ends of their minimal presentations).

\subsection{Minimal presentation}\label{DecSSec}
Every element $A\in\PSL(2,\Z)$ can be written
in the form $A=M(c_1,\ldots,c_k)$, where $c_i\geq1$.
We are interested in the shortest presentations of this form.
It turns out that the coefficients~$c_i$ can be recovered from the coefficients
of~$A$, without expansions of quadratic irrationals.

\begin{thm}
\label{SLDecThm}
(i)
The presentation $A=M(c_1,\ldots,c_k)$
with positive integer coefficients~$c_i$ is unique, provided~$k$ is the smallest possible.

(ii)
If $A=\begin{pmatrix}
a&-b\\[2pt]
c&-d
\end{pmatrix}$
where $a,b,c,d>0$ and $a>b$, then 
the coefficients $(c_1,\ldots,c_k)$  
are those of the continued fraction $\frac{a}{c}=\llbracket{}c_1,\ldots,c_k\rrbracket$.
\end{thm}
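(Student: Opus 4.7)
The plan is to prove part~(ii) by directly constructing the minimum presentation from the negative continued fraction of $a/c$; part~(i) then follows as an immediate corollary from the uniqueness of that continued fraction expansion.

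First, I will observe that the minimum-length negative continued fraction expansion of a positive rational $r/s$ with coefficients~$\geq 1$ is unique, and can be computed by a greedy algorithm: $c_1 = \lceil r/s \rceil$ when $r/s > 1$ is non-integer, $c_1 = r/s$ when $r/s$ is a positive integer (length~$1$), and $c_1 = 1$ when $r/s < 1$; one then recurses on $s/(c_1 s - r) > 1$. The identity $\llbracket\ldots, c, 1\rrbracket = \llbracket\ldots, c-1\rrbracket$ further shows that the last coefficient $c_k$ of this minimum expansion must satisfy $c_k \geq 2$ whenever $k \geq 2$.

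The main step is to verify $A = M(c_1, \ldots, c_k)$ where $(c_1, \ldots, c_k)$ is this minimum expansion of $a/c = r/s$. By Proposition~\ref{MatConv}, $M(c_1, \ldots, c_k) = \begin{pmatrix} r & -r^\star \\ s & -s^\star \end{pmatrix}$ where $r^\star/s^\star = \llbracket c_1, \ldots, c_{k-1}\rrbracket$ is the penultimate convergent, so the first column matches $A = \begin{pmatrix} r & -r' \\ s & -s'\end{pmatrix}$ by construction. For the second column, writing the common determinant relation $rs' - r's = -1 = rs^\star - r^\star s$ gives $r(s' - s^\star) = s(r' - r^\star)$, and since $\gcd(r,s) = 1$ this forces $r' - r^\star = tr$ and $s' - s^\star = ts$ for some integer~$t$. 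The hypothesis supplies $0 < r' < r$, and I will verify separately that $0 < r^\star < r$; then $|r' - r^\star| < r$ forces $t = 0$, so $A = M(c_1, \ldots, c_k)$. This is the minimum matrix presentation because any presentation $A = M(c'_1, \ldots, c'_{k'})$ yields, via Proposition~\ref{MatConv}, a continued fraction expansion of $a/c$ of length~$k'$, forcing $k' \geq k$. Part~(i) is then immediate: two minimum presentations both produce the unique minimum continued fraction expansion of $a/c$, hence coincide.

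The main obstacle is the strict bound $r^\star < r$, especially in the boundary case $c_1 = 1$ (i.e., $a/c < 1$). In this regime the continuants can stall: for instance $K_i(1, 2, 2, \ldots, 2) \equiv 1$, so $r^\star = r$ is conceivable and produces degenerate matrices with $a = b = 1$. The resolution is precisely that such sequences yield matrices violating the hypothesis $a > b$; under our standing assumption the continuant must grow strictly at some step, delivering the required bound.
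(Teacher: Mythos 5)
Your argument for part~(ii) is essentially sound and takes a genuinely different route from the paper: where the paper argues geometrically (the hypotheses force $\frac{a}{c}$ and $\frac{b}{d}$ to be Farey neighbours, and in the case $a>b$ the fraction $\frac{b}{d}$ is identified with the penultimate convergent $\llbracket c_1,\ldots,c_{k-1}\rrbracket$ of $\frac{a}{c}$, after which Proposition~\ref{MatConv} matches both columns at once), you match only the first column and recover the second from the determinant/coprimality relation together with the bound $0<r^\star<r$. That bound is indeed the right technical point, and your resolution of the degenerate case --- the continuants of $(1,2,\ldots,2)$ stall at $1$, forcing $a=1$, which contradicts $a>b\geq 1$ --- is correct; in fact $0\le r^\star\le r$ already suffices to get $|r'-r^\star|<r$.

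The genuine gap is in part~(i). First, part~(i) concerns an arbitrary $A\in\PSL(2,\Z)$, and your deduction of (i) from (ii) only covers matrices satisfying the hypotheses of (ii). It says nothing about, e.g., $A=L^a$, $A=R^{-a}$, or matrices with $a<b$ or with a zero entry; for such $A$ the minimal presentation is not governed by the minimal expansion of the first-column ratio at all (for instance $L=M(1,2,1,1)$ has minimal length $4$ while $\frac{1}{1}=\llbracket 1\rrbracket$ admits an expansion of length $1$, and all the matrices $M(c_1,1,1)=-R^{c_1-1}$ share the first column $(-1,0)^t$, so the first column cannot single out the presentation). Second, even in the special case, the load-bearing assertion --- that the minimal-length expansion of $a/c$ among \emph{all} expansions with coefficients $\geq 1$ (i.e., ratios of continuants, allowing an overall sign) is unique and is the greedy one --- is precisely the nontrivial content here and is left unproved: the classical uniqueness statement concerns coefficients $\geq 2$, and once $1$'s are allowed there are infinitely many expansions. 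The paper proves (i) by a different mechanism: the two local surgery operations \eqref{FirstSur} and \eqref{SecSur} reduce any presentation, without changing the element of $\PSL(2,\Z)$, to one with $c_i\geq 2$ away from the ends, and a monotonicity argument on the resulting well-defined continued fractions gives uniqueness. To complete your proof you would need either to prove your greedy lemma along such lines, or to explain how an arbitrary $A$ is reduced (by multiplication by $R$ and $S$, as in Corollary~\ref{PosDeCor}) to the case covered by part~(ii) in a way that controls minimal presentations.
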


We also have the following ``minimality criterion''.

\begin{prop}
\label{MinCrProp}
If an element $A$ of $\PSL(2,\Z)$ is written in the form
$A=M(c_1,\ldots,c_k)$, then this is the minimal presentation of $A$,
if and only if $c_i\geq2$, except perhaps for
the ends of the sequence, i.e., for $c_1$, or $c_1,c_2$ and $c_k$, or $c_{k-1},c_k$.
\end{prop}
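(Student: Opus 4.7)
The plan is to use the two surgery operations of Section~\ref{MoySec}, which in $\PSL(2,\Z)$ (where the $\pm$ sign is irrelevant) become
\begin{align*}
\text{(R1):} \quad & M(\ldots, c_{i-1}, 1, c_{i+1}, \ldots) = M(\ldots, c_{i-1}-1, c_{i+1}-1, \ldots),\\
\text{(R2):} \quad & M(\ldots, c, 1, 1, c', \ldots) = M(\ldots, c+c'-1, \ldots),
\end{align*}
valid whenever the resulting sequence is a sequence of positive integers and the displayed neighbours actually exist --- so (R1) requires $c_{i-1},c_{i+1}\geq 2$, and (R2) requires the pair $(1,1)$ to sit with terms on both sides. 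Call a presentation \emph{irreducible} if neither (R1) nor (R2) applies to it.

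For $(\Rightarrow)$ I argue by contrapositive: if some $c_i=1$ lies outside the allowed end positions, then one of (R1), (R2) shortens the presentation. The main case is $c_i=1$ with $3\leq i\leq k-2$: either both neighbours are $\geq 2$ and (R1) applies, or some neighbour equals $1$, producing a pair $(1,1)$ at positions in $[2,k-1]$ on which (R2) applies. Near-boundary violations such as $c_2=1$ with $c_1,c_3\geq 2$ (reduced by (R1)) or $c_1=c_2=c_3=1$ (reduced by (R2) at the pair $(c_2,c_3)$ bracketed by $c_1,c_4$), together with their right-hand mirror images, are handled in the same spirit. For $(\Leftarrow)$ one inspects each allowed pattern and checks that no interior $1$ has two neighbours $\geq 2$ and that every pair of consecutive $1$'s lies at an end where it cannot be bracketed, so that neither (R1) nor (R2) can act: the presentation is irreducible.

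It remains to promote irreducibility to minimality. By Theorem~\ref{SLDecThm}~(i) the minimal presentation of $A\in\PSL(2,\Z)$ is unique, and being minimal it is itself irreducible, hence by $(\Rightarrow)$ satisfies the end condition. To show conversely that \emph{every} end-condition-satisfying presentation $(c_1,\ldots,c_k)$ already has minimal length, I would argue by induction on $k$: if $k$ exceeds the minimal length, the chain of $\PSL(2,\Z)$-relations converting $(c_1,\ldots,c_k)$ into the unique minimal one must strictly decrease $k$ somewhere, and the only such shortening moves are (R1) and (R2), contradicting irreducibility. The main obstacle is therefore the completeness of (R1), (R2) together with their inverses as a generating set for equalities of $M$-words in $\PSL(2,\Z)$. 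This rests on the free-product structure $\PSL(2,\Z)\simeq\Z/2\Z*\Z/3\Z$: the relation $S^2=\Id$ manifests as (R1) (via $M(1,1)=M(1)^{-1}$) and $(RS)^3=\Id$ as (R2) (via $M(1,1,1)=\Id$ in $\PSL(2,\Z)$), so any word equality reduces to a finite concatenation of these local moves.
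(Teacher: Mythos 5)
Your ``only if'' half is exactly the paper's: the two surgeries \eqref{FirstSur} and \eqref{SecSur} shorten any presentation violating the end condition, so such a presentation is not minimal. The gap is in your ``if'' half. You reduce minimality of an irreducible word to the claim that (R1), (R2) and their inverses generate \emph{all} equalities between positive $M$-words in $\PSL(2,\Z)$, and you justify this only by pointing at $\PSL(2,\Z)\simeq(\Z/2\Z)*(\Z/3\Z)$. That is not a proof: knowing that $S^2$ and $(RS)^3$ generate the relations as a group presentation does not automatically show that any two positive integer sequences $(c_1,\ldots,c_k)$ and $(c'_1,\ldots,c'_{k'})$ with $M(c_1,\ldots,c_k)=M(c'_1,\ldots,c'_{k'})$ are connected by a chain of these specific local moves \emph{while remaining positive $M$-words} at every intermediate stage. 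Moreover, even granting completeness, your final step fails logically: the chain from $(c_1,\ldots,c_k)$ to the minimal word must indeed shorten somewhere, but that shortening may occur at an intermediate word obtained after length-increasing inverse moves, so it does not contradict irreducibility of the \emph{initial} word. To close this you would need a confluence argument (local confluence plus termination, i.e.\ Newman's lemma) for the rewriting system, or some monotone invariant; neither is supplied. A further small symptom: when $k$ is so small that the two exceptional ends overlap, e.g.\ $(1,1,1,1)$, the word satisfies the letter of the end condition yet (R2) applies to the bracketed middle pair and $M(1,1,1,1)=M(1)$ in $\PSL(2,\Z)$, so your claim that the end condition forces irreducibility needs the ends to be genuinely disjoint.

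The paper takes a different and shorter route for the ``if'' part, referring to the proof of Theorem~\ref{SLDecThm}(i): after normalizing the tail by multiplying on the right by $M(1)$ or $M(1,\,1)$ so that $c_{k-1}\geq2$, the negative continued fraction $\llbracket c_1,\ldots,c_k\rrbracket$ is well defined, and the elementary inequalities $\llbracket c_1,\ldots,c_k\rrbracket>c_1-1$ and $\llbracket c'_1,\ldots,c'_k\rrbracket<c'_1$ force any two normalized presentations of the same element and the same length to have $c_1=c'_1$, hence (inductively) to coincide. This direct uniqueness argument pins the coefficients to the entries of $A$ via Proposition~\ref{MatConv} and sidesteps the completeness and confluence issues entirely; if you want to salvage your rewriting-system approach, those two issues are precisely what you must prove.
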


Before giving the proof of Theorem~\ref{SLDecThm} and Proposition~\ref{MinCrProp},
let us consider several examples and corollaries.

Part~(ii) of Theorem~\ref{SLDecThm} covers all different cases of elements of~$\PSL(2,\Z)$,
modulo multiplication by~$R$ and~$S$ from the right and from the left.
For instance, the following statement treats the case of all matrices with positive coefficients.

\begin{cor}
\label{PosDeCor}
Let $A=\begin{pmatrix}
a&b\\[2pt]
c&d
\end{pmatrix}$
be an element of~$\PSL(2,\Z)$ with $a,b,c,d>0$, one has the following two cases:

(i) if $a>b$, then the minimal presentation of~$A$ is
\begin{equation}
\label{CoeffciEq}
A=M(c_1,\ldots,c_k,2,1,1),
\end{equation}
where $\frac{a}{c}=\llbracket{}c_1,\ldots,c_k\rrbracket$;
and the conjugacy class of~$A$ is parametrized by
$(c_1+1,c_2,\ldots,c_k)$;

(ii) if $a<b$, then the minimal presentation of~$A$ is
\begin{equation}
\label{CoeffciBisEq}
A=M(c_1,\ldots,c_{k-1},\,c_k+1,\,1,1)
\end{equation}
where $\frac{b}{d}=\llbracket{}c_1,\ldots,c_k\rrbracket$;
and the conjugacy class of~$A$ is parametrized by
$(c_1+c_k,c_2,\ldots,c_{k-1})$.
\end{cor}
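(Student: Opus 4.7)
The plan is to reduce each case to Theorem~\ref{SLDecThm} by right-multiplying $A$ by a short element of $\PSL(2,\Z)$ that strips off the tail of the minimal presentation, and then to extract the conjugacy class via cyclic manipulations combined with the collapsing identity $M(a,0,b)=M(a+b)$ in $\PSL(2,\Z)$, which follows from a direct $2\times 2$ computation.

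For case~(i), the relation $ad-bc=1$ with $a>b\geq 1$ and $c,d\geq 1$ forces $c\geq d$; indeed, if $d\geq c+1$ then $ad-bc\geq ac+a-bc>bc+b-bc\geq 1$, a contradiction. Hence
\begin{equation*}
AR^{-1}=\begin{pmatrix} a & -(a-b)\\ c & -(c-d) \end{pmatrix}
\end{equation*}
has positive entries in the required positions, with the degenerate subcase $c=d$ (which forces $A=\begin{pmatrix}b+1&b\\1&1\end{pmatrix}$) handled by a one-line direct verification. The first column of $AR^{-1}$ is still $(a,c)^{t}$, so Theorem~\ref{SLDecThm}(ii) yields $AR^{-1}=M(c_1,\ldots,c_k)$ with $\frac{a}{c}=\llbracket c_1,\ldots,c_k\rrbracket$. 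Since $M(2,1,1)=-R$, i.e.\ $R=M(2,1,1)$ in $\PSL(2,\Z)$, concatenation gives $A=M(c_1,\ldots,c_k,2,1,1)$, and Proposition~\ref{MinCrProp} certifies this as the minimal presentation (only two trailing $1$'s appear).

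Case~(ii) is strictly parallel: when $a<b$, the analogous sign-count in $ad-bc=1$ shows $d>c$, so
\begin{equation*}
AS=\begin{pmatrix} b & -a\\ d & -c \end{pmatrix}
\end{equation*}
fits the hypotheses of Theorem~\ref{SLDecThm}(ii) and gives $AS=M(c_1,\ldots,c_k)$ with $\frac{b}{d}=\llbracket c_1,\ldots,c_k\rrbracket$. Applying Lemma~\ref{TransMat} in the form $M(c_k)\cdot S=-R^{c_k}=M(c_k+1,1,1)$ in $\PSL(2,\Z)$, the identity $A=M(c_1,\ldots,c_k)\cdot S^{-1}$ rewrites as $A=M(c_1,\ldots,c_{k-1},c_k+1,1,1)$, again minimal by Proposition~\ref{MinCrProp}.

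For the conjugacy-class parametrization, I would use the observation that conjugation by $M(c_1)$ performs the cyclic shift $M(c_1,\ldots,c_n)\sim M(c_2,\ldots,c_n,c_1)$. In case~(i), rotate $(c_1,\ldots,c_k,2,1,1)$ to $(1,1,c_1,\ldots,c_k,2)$, apply the local surgery~\eqref{FirstSurM} at the second $1$ to reach $(0,c_1-1,c_2,\ldots,c_k,2)$, rotate to $(2,0,c_1-1,c_2,\ldots,c_k)$, and collapse the $0$ via $M(a,0,b)=M(a+b)$ to land on $(c_1+1,c_2,\ldots,c_k)$. Case~(ii) runs analogously, turning $(c_1,\ldots,c_{k-1},c_k+1,1,1)$ into $(c_1+c_k,c_2,\ldots,c_{k-1})$. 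The main obstacle, I expect, will be checking the small or degenerate cases cleanly (e.g.\ $k=1$ in case~(i), or the situation $c_1=1$ in which the surgery would produce two consecutive zeros and one needs the refined rule $M(a,0,0,b)=M(a,b)$ in $\PSL(2,\Z)$): in each such case one must verify by hand that the surgery chain still lands at the claimed representative and that the final sequence consists of integers $\geq 2$, so that Proposition~\ref{WNProp}'s parametrization applies.
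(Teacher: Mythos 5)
Your argument is correct and is essentially the paper's own proof: in both cases you strip the tail by right-multiplying by $R^{-1}$ (resp.\ $S$), invoke Theorem~\ref{SLDecThm}(ii) on the resulting matrix, and reattach the factor via $R=M(2,1,1)$ (resp.\ $M(c_k)S=M(c_k+1,1,1)$); your extra care with the degenerate subcase $c=d$, which Theorem~\ref{SLDecThm}(ii) does not literally cover, is a welcome refinement. The only divergence is the conjugacy-class step: the paper conjugates by $R$ (resp.\ $R^{c_k}$) to bring the tail to the front and collapses $(x,1,1,y)\mapsto(x+y-1)$ in one stroke via the surgery~\eqref{SecSurM}, whereas your chain of cyclic shifts, the surgery~\eqref{FirstSurM}, and the rule $M(a,0,b)=M(a+b)$ passes through words with zero entries --- this is legitimate since the surgery identities are genuine matrix identities independent of positivity, but the paper's route reaches $(c_1+1,c_2,\ldots,c_k)$ and $(c_1+c_k,c_2,\ldots,c_{k-1})$ without leaving the positive-integer setting and without the case analysis you anticipate for $c_1=1$.
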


\begin{proof}
Part~(i).
After multiplication from the right by $R^{-1}$,
the matrix $AR^{-1}$ satisfies the conditions of Part~(ii) of Theorem~\ref{SLDecThm}.
One then uses that $R^{-1}=M(1,1,2,1)$ and $M(2,1,2,1)=\id$ (up to a sign, i.e., in~$\PSL(2,\Z)$).
Hence~\eqref{CoeffciEq}.
Next, one has
$$
RAR^{-1}=M(2,1,1,c_1,\ldots,c_k)=M(c_1+1,c_2,\ldots,c_k).
$$

Part~(ii).
The matrix $A$ becomes as in Part~(ii) of Theorem~\ref{SLDecThm},
when multiplied from the right by $S=M(1,1,2,1,1)$.
Next, since~$R^a=M(a+1,1,1)$, one has
$$
R^{c_k}AR^{-c_k}=M(c_k+1,1,1,c_1,\ldots,c_{k-1})=M(c_1+c_k,c_2,\ldots,c_{k-1}).
$$
Hence the result.
\end{proof}

\begin{rem}
Comparing~\eqref{CoeffciEq} and~\eqref{CoeffciBisEq} to somewhat similar known formulas
in terms of the positive continued fractions (see~\cite{Kar}, Theorem~7.14),
we observe that they are quite different.
Indeed, formulas~\eqref{CoeffciEq} and~\eqref{CoeffciBisEq} use the ``dominant'' (largest) coefficients
of~$A$, while the formulas in~\cite{Kar} use the smallest coefficients.
\end{rem}

Rewriting~\eqref{CoeffciEq} and~\eqref{CoeffciBisEq}
in terms of the standard generators, and using Proposition~\ref{MaTCompProp},
we have the following decomposition.

\begin{cor}
\label{PosDeGenCor}
Let $A=\begin{pmatrix}
a&b\\[2pt]
c&d
\end{pmatrix}$
be an element of~$\PSL(2,\Z)$ with $a,b,c,d>0$. Its expression in terms of the standard generators is:

(i) if $a>b$, then
$$
\begin{array}{rcl}
A&=&R^{c_1}S\,R^{c_{2}}S\cdots{}SR^{c_k}SR,\\[4pt]
&=&R^{a_1}(UR)^{a_2}\cdots{}R^{a_{2m-1}}(UR)^{a_{2m}}
\end{array}
$$
where $\frac{a}{c}=[a_1,\ldots,a_{2m}]=\llbracket{}c_1,\ldots,c_k\rrbracket$;

(ii) if $a<b$, then
$$
\begin{array}{rcl}
A&=&R^{c_1}S\,R^{c_{2}}S\cdots{}SR^{c_k},\\[4pt]
&=&R^{a_1}(UR)^{a_2}\cdots{}R^{a_{2m-1}}(UR)^{a_{2m}-1}R
\end{array}
$$
where $\frac{b}{d}=[a_1,\ldots,a_{2m}]=\llbracket{}c_1,\ldots,c_k\rrbracket$.
\end{cor}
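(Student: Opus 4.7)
The plan is to deduce Corollary~\ref{PosDeGenCor} from Corollary~\ref{PosDeCor} by expanding the matrices $M(\cdots)$ via Proposition~\ref{MaTCompProp} and simplifying the trailing factors with Lemma~\ref{TransMat}; to pass from the $\{R,S\}$-form to the $\{U,R\}$-form I would invoke Proposition~\ref{NegRegTheorem} together with two elementary identities verified by a direct $2\times 2$ computation, namely $L = UR$ and $US = -R$ in $\SL(2,\Z)$ (so $US = R$ in $\PSL(2,\Z)$).

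First, for the $\{R,S\}$-decompositions: Corollary~\ref{PosDeCor}(i) gives $A = M(c_1,\ldots,c_k,2,1,1)$, and Proposition~\ref{MaTCompProp} rewrites this as $R^{c_1}S\cdots R^{c_k}S\cdot(R^2SRSRS)$; the trailing block is $M(2,1,1)=-R$ by Lemma~\ref{TransMat}, hence equals $R$ in $\PSL(2,\Z)$, yielding the stated formula. Case~(ii) is parallel: $A = M(c_1,\ldots,c_{k-1},c_k+1,1,1)$ expands as $R^{c_1}S\cdots R^{c_{k-1}}S\cdot(R^{c_k+1}SRSRS)$, and the trailing block equals $M(c_k+1,1,1)=-R^{c_k}$, giving $A = R^{c_1}S\cdots S R^{c_k}$.

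For the $\{U,R\}$-decompositions, the previous step shows that $A = M(c_1,\ldots,c_k)R$ in case~(i) and $A = M(c_1,\ldots,c_k)S^{-1} = M(c_1,\ldots,c_k)S$ in case~(ii) (using $S^{-1}=S$ in $\PSL(2,\Z)$). By Proposition~\ref{NegRegTheorem}, $M(c_1,\ldots,c_k)R = M^+(a_1,\ldots,a_{2m})$ for the corresponding $\frac{r}{s}$, so in case~(i) I would simply expand $M^+$ via Proposition~\ref{MaTCompProp} and substitute $L=UR$ to obtain the claimed product. In case~(ii), I would write $A = M^+(a_1,\ldots,a_{2m})R^{-1}S$, expand similarly as $R^{a_1}(UR)^{a_2}\cdots R^{a_{2m-1}}(UR)^{a_{2m}}R^{-1}S$, and then peel off one factor $UR$ from the end to get $(UR)^{a_{2m}-1}\cdot (UR)\cdot R^{-1}S = (UR)^{a_{2m}-1}\cdot US = (UR)^{a_{2m}-1}\cdot R$, yielding the desired tail.

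The entire argument is essentially bookkeeping once Corollary~\ref{PosDeCor} and Proposition~\ref{NegRegTheorem} are in hand. The only non-formal step — and the one I would carry out most carefully — is the absorption in case~(ii) of the trailing $R^{-1}S$ into the final $UR$-block via the identity $US=R$ in $\PSL(2,\Z)$; this is the precise reason the exponent of $(UR)$ drops from $a_{2m}$ to $a_{2m}-1$ and a single $R$ appears at the very end of the decomposition.
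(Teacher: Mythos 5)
Your proposal is correct and follows the same route the paper intends: the paper's entire proof is the remark that one rewrites~\eqref{CoeffciEq} and~\eqref{CoeffciBisEq} via Proposition~\ref{MaTCompProp}, and you carry out exactly this, supplying the elementary identities $M(2,1,1)=-R$, $L=UR$ and $US=-R$ that the paper leaves implicit. The bookkeeping checks out in both cases, including the peeling-off of one $UR$-factor in case~(ii) that produces the exponent $a_{2m}-1$ and the terminal $R$.
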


Together with Proposition~\ref{MaTCompProp}, Theorem~\ref{SecondMainThm} implies an
explicit description of relations in the group $\PSL(2,\Z)$.
Every element $A\in\PSL(2,\Z)$ can be written in terms of the generators~$R$ and~$S$
(see formula~\eqref{NegMatGenEq}) as follows
$$
A=R^{c_1}S\,R^{c_{2}}S\cdots{}R^{c_n}S,
$$
where $c_i$ are some positive integers.
The following statement is actually equivalent to Theorem~\ref{SecondMainThm}.

\begin{cor}
One has 
$$R^{c_1}S\,R^{c_{2}}S\cdots{}R^{c_n}S=\Id$$ 
in $\PSL(2,\Z),$
if and only if $(c_1,\ldots,c_n)$ is the quiddity of 
a $3d$-dissection of an $n$-gon.
\end{cor}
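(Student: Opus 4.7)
The plan is to reduce this corollary to Theorem~\ref{SecondMainThm} via the generator decomposition already established in Proposition~\ref{MaTCompProp}. The bridge is formula~\eqref{NegMatGenEq}, which asserts that
$$
M(c_1,\ldots,c_n)=R^{c_1}S\,R^{c_{2}}S\cdots{}R^{c_n}S
$$
as an identity in $\SL(2,\Z)$. Thus the word on the right-hand-side is literally the matrix $M(c_1,\ldots,c_n)$, and the question reduces to understanding when this matrix represents the identity in $\PSL(2,\Z)$.

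First, I would observe that a matrix in $\SL(2,\Z)$ projects to the identity in $\PSL(2,\Z)=\SL(2,\Z)/\{\pm\Id\}$ precisely when it equals $\Id$ or $-\Id$ in $\SL(2,\Z)$. Combined with the identification above, this gives the equivalence
$$
R^{c_1}S\,R^{c_2}S\cdots R^{c_n}S=\Id \text{ in }\PSL(2,\Z)
\quad\Longleftrightarrow\quad
M(c_1,\ldots,c_n)=\pm\Id \text{ in }\SL(2,\Z).
$$
Next, I would invoke Theorem~\ref{SecondMainThm} directly: the positive integer solutions of $M(c_1,\ldots,c_n)=\pm\Id$ are exactly the quiddities of $3d$-dissections of the $n$-gon. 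Concatenating the two equivalences yields the desired statement.

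There is essentially no obstacle here, since all of the real content has been packaged into Proposition~\ref{MaTCompProp} and Theorem~\ref{SecondMainThm}. The only point worth remarking on is the transition from $\SL(2,\Z)$ to $\PSL(2,\Z)$: in $\SL(2,\Z)$ the two equations $M(c_1,\ldots,c_n)=\Id$ and $M(c_1,\ldots,c_n)=-\Id$ are genuinely different and, by Corollary~\ref{PlusMinProp}, they are distinguished by the parity of the number of subpolygons with an even number of vertices in the $3d$-dissection. Working in $\PSL(2,\Z)$ collapses these two cases into one, so the corollary is the $\PSL$-level shadow of the more refined $\SL$-level Theorem~\ref{SecondMainThm}.
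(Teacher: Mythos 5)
Your proof is correct and follows exactly the route the paper intends: it combines the identity $M(c_1,\ldots,c_n)=R^{c_1}S\cdots R^{c_n}S$ from Proposition~\ref{MaTCompProp} with Theorem~\ref{SecondMainThm}, noting that $\pm\Id$ in $\SL(2,\Z)$ collapses to $\Id$ in $\PSL(2,\Z)$. The paper itself states that this corollary is ``actually equivalent to Theorem~\ref{SecondMainThm}'' and offers no further argument, so your write-up matches (and slightly elaborates) the paper's own reasoning.
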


Note that all of the above relations follow from the following two:
$$
S^2=\id,
\qquad
\left(RS
\right)^3=\id,
$$
since $\PSL(2,\Z)$ is known to be isomorphic to the free product of two cyclic groups
with the generators $S$ and~$RS$, namely
$\PSL(2,\Z)\simeq(\Z/2\Z)*(\Z/3\Z)$.

\begin{ex}
We go back to Example~\ref{CZKEx}.

(a) Consider first the matrix
$A'=\begin{pmatrix}
13&-9\\[2pt]
3&-2
\end{pmatrix}$.
It satisfies the condition from Part~(ii) of Theorem~\ref{SLDecThm}, and, indeed, we see that
$$
\frac{13}{3}=
[4,3]=\llbracket5,2,2\rrbracket.
$$
One then checks that $A'=M(5,2,2)$.

(b) The matrix 
$A=\begin{pmatrix}
10&3\\[2pt]
3&1
\end{pmatrix}$
is as in Corollary~\ref{PosDeCor}, Part~(i).
Since $\frac{10}{3}=
[3,3]=\llbracket4,2,2\rrbracket,$ one easily checks that $A=M(4,2,2,2,1,1)$, in accordance with~\eqref{CoeffciEq}.

(c) 
 The matrix 
$A=\begin{pmatrix}
3&10\\[2pt]
2&7
\end{pmatrix}$
is as in Corollary~\ref{PosDeCor}, Part~(ii).
Since $\frac{10}{7}=
[1,2,2,1]=\llbracket2,2,4\rrbracket,$ one checks that
$A=M(2,2,5,1,1)$,
and the conjugacy class of~$A$ is thus parametrized by $(6,2)$.
Note that this is the period of the negative continued fraction of
the attractive fixed point of~$A$:
$$
x_+=-1+\sqrt{6}=\llbracket2,\overline{2,6}\rrbracket.
$$.
\end{ex}

\subsection{Proof of Theorem~\ref{SLDecThm} and Proposition~\ref{MinCrProp}}\label{ProoDecSSec}

Theorem~\ref{SLDecThm} Part (i).
Consider the following two ``local surgery'' operations.

\begin{enumerate}
\item
Whenever the $n$-tuple $(c_1,\ldots,c_k)$ contains
a fragment $c_i,\,1\,,c_{i+2}$ with $c_i,c_{i+2}>1$,
the following ``Conway-Coxeter operation'' removes $1$ and
decreases the two neighboring entries by $1$:
\begin{equation}
\label{FirstSur}
(c_1,\ldots,c_i,\,1,\,c_{i+2},\ldots,c_k)
\mapsto
(c_1,\ldots,c_i-1,\,c_{i+2}-1,\ldots,c_k).
\end{equation}

\item
Whenever the $n$-tuple $(c_1,\ldots,c_k)$ contains
a fragment $c_i,\,1,\,1\,,c_{i+3}$
(with arbitrary~$c_i,c_{i+3}$),
the following operation reduces $k$ by $3$ producing the $(k-3)$-tuple
\begin{equation}
\label{SecSur}
(c_1,\ldots,c_i,\,1,\,\,1,\,c_{i+3},\ldots,c_k)
\mapsto
(c_1,\ldots,c_i+c_{i+3}-1,\ldots,c_k).
\end{equation}
\end{enumerate}

Note that these operations have already been used in Sections~\ref{QuidSSec} and~\ref{MoySec}.
It has already been checked,
that these operations preserve the element $M(c_1,\ldots,c_k)$ of $\PSL(2,\Z)$.

Given an arbitrary presentation $A=M(c_1,\ldots,c_k)$ with positive integers $c_i$,
applying the operations~(\ref{FirstSur}) and~(\ref{SecSur}) when this is possible
(and in arbitrary order),
the $k$-tuple $(c_1,\ldots,c_k)$ can be reduced to one of the case
$c_i\geq2$, except perhaps for $c_1$, or $c_1,c_2$ and $c_k$, or $c_{k-1},c_k$.

By multiplying  from the right by~$M(1)$ or~$M(1,\,1)$, we can furthermore normalize the sequence
$(c_1,\ldots,c_k)$ in such a way that it contains at most one entry $1$ at the end, i.e.,
that $c_{k-1}\geq2$.
This implies that the continued fraction $\llbracket{}c_1,\ldots,c_k\rrbracket$ is well defined.

Assume there are two different presentations of the same element,
$$
A=M(c_1,\ldots,c_k)=M(c'_1,\ldots,c'_{k}),
$$
with positive integers $c_i$ and minimal $k$,
so that the continued fraction 
$\llbracket{}c'_1,\ldots,c'_k\rrbracket=\llbracket{}c_1,\ldots,c_k\rrbracket$ is also well defined.
Then, without loss of generality, we can assume that
 $c_1>c'_1$.
 Since, for $k\geq3$, one has $\llbracket{}c_1,\ldots,c_k\rrbracket{}>c_1-1$
 and $\llbracket{}c'_1,\ldots,c'_{k}\rrbracket<c'_1$,
 this implies that 
 $
 \llbracket{}c_1,\ldots,c_k\rrbracket{}>\llbracket{}c'_1,\ldots,c'_{k}\rrbracket{},
 $
which contradicts the assumption.

Theorem~\ref{SLDecThm} Part (ii). The positivity of the coefficients of $A$ and the condition $\det A=1$ imply that
$\frac{a}{c}<\frac{b}{d}$. We give a geometric argument using results of Section \ref{TriangCFSec}.
The rationals  $\frac{a}{c}<\frac{b}{d}$ are linked by an edge in the Farey graph and one has the following two possible local pictures in $\mathbb{T}_{\frac{a}{c}}\cup  \mathbb{T}_{\frac{b}{d}}$. 

\psscalebox{1.0 1.0} 
{\psset{unit=0.75cm}
\begin{pspicture}(0,-2.915)(20.4,2.915)
\psdots[linecolor=black, dotsize=0.16](2.8,-2.285)
\psdots[linecolor=black, dotsize=0.16](3.6,-2.285)
\psdots[linecolor=black, dotsize=0.16](4.4,-2.285)
\psdots[linecolor=black, dotsize=0.16](5.2,-2.285)
\psdots[linecolor=black, dotsize=0.16](7.6,-2.285)
\psdots[linecolor=black, dotsize=0.16](8.4,-2.285)
\psdots[linecolor=black, dotsize=0.16](6.8,-2.285)
\psarc[linecolor=black, linewidth=0.04, dimen=outer](3.2,-2.285){0.4}{0.0}{180.0}
\psarc[linecolor=black, linewidth=0.04, dimen=outer](4.0,-2.285){0.4}{0.0}{180.0}
\psarc[linecolor=black, linewidth=0.04, dimen=outer](4.8,-2.285){0.4}{0.0}{180.0}
\psarc[linecolor=black, linewidth=0.04, dimen=outer](8.0,-2.285){0.4}{0.0}{180.0}
\psarc[linecolor=black, linewidth=0.04, dimen=outer](3.6,-2.285){0.8}{0.0}{180.0}
\psarc[linecolor=black, linewidth=0.04, dimen=outer](4.0,-2.285){1.2}{0.0}{180.0}
\psarc[linecolor=black, linewidth=0.04, dimen=outer](5.6,-2.285){2.8}{0.0}{180.0}
\psarc[linecolor=black, linewidth=0.04, dimen=outer](5.2,-2.285){2.4}{0.0}{180.0}
\rput(1.2,-2.685){$\frac{a''}{c''}$}
\rput(2.8,-2.685){$\frac{a}{c}$}
\rput(3.6,-2.685){$\frac{b}{d}$}
\rput(6.0,-2.285){$\ldots$}
\rput(7.6,-2.685){$v_{\ell+1}$}
\rput(6.6,-2.685){$v_{\ell+2}$}
\rput(4.4,-2.685){$v_{k-1}$}
\rput(5.4,-2.685){$v_{k-2}$}
\psarc[linecolor=black, linewidth=0.04, dimen=outer](9.6,-2.285){1.2}{90.0}{180.0}
\psline[linecolor=black, linewidth=0.04, linestyle=dashed, dash=0.17638889cm 0.10583334cm](2.8,-2.285)(2.8,2.115)(2.8,2.115)
\psline[linecolor=black, linewidth=0.04, linestyle=dashed, dash=0.17638889cm 0.10583334cm](3.6,-2.285)(3.6,2.115)(3.6,2.115)
\psarc[linecolor=black, linewidth=0.04, dimen=outer](11.6,-2.285){1.2}{0.0}{90.0}
\psdots[linecolor=black, dotsize=0.16](19.2,-2.285)
\psdots[linecolor=black, dotsize=0.16](16.0,-2.285)
\psdots[linecolor=black, dotsize=0.16](17.6,-2.285)
\psdots[linecolor=black, dotsize=0.16](16.8,-2.285)
\psdots[linecolor=black, dotsize=0.16](15.2,-2.285)
\psarc[linecolor=black, linewidth=0.04, dimen=outer](15.2,-2.285){2.4}{0.0}{180.0}
\psarc[linecolor=black, linewidth=0.04, dimen=outer](15.6,-2.285){2.0}{0.0}{180.0}
\psarc[linecolor=black, linewidth=0.04, dimen=outer](16.4,-2.285){1.2}{0.0}{180.0}
\psarc[linecolor=black, linewidth=0.04, dimen=outer](16.8,-2.285){0.8}{0.0}{180.0}
\psarc[linecolor=black, linewidth=0.04, dimen=outer](17.2,-2.285){0.4}{0.0}{180.0}
\rput(17.6,-2.685){$\frac{b}{d}$}
\rput(16.8,-2.685){$\frac{a}{c}$}
\rput(16.0,-2.685){$v_{k+2}$}
\psarc[linecolor=black, linewidth=0.04, dimen=outer](13.2,-2.285){0.4}{0.0}{180.0}
\psarc[linecolor=black, linewidth=0.04, dimen=outer](15.6,-2.285){0.4}{0.0}{180.0}
\psarc[linecolor=black, linewidth=0.04, dimen=outer](16.4,-2.285){0.4}{0.0}{180.0}
\psarc[linecolor=black, linewidth=0.04, dimen=outer](18.4,-2.285){0.8}{0.0}{180.0}
\psdots[linecolor=black, dotsize=0.16](12.8,-2.285)
\psdots[linecolor=black, dotsize=0.16](13.6,-2.285)
\rput(14.4,-2.285){$\ldots$}
\psline[linecolor=black, linewidth=0.04, linestyle=dashed, dash=0.17638889cm 0.10583334cm](16.8,-2.285)(16.8,2.115)(16.8,2.115)(16.8,2.115)
\psline[linecolor=black, linewidth=0.04, linestyle=dashed, dash=0.17638889cm 0.10583334cm](17.6,-2.285)(17.6,2.115)
\psarc[linecolor=black, linewidth=0.04, dimen=outer](16.0,-2.285){3.2}{0.0}{180.0}
\psarc[linecolor=black, linewidth=0.04, dimen=outer](2.0,-2.285){0.8}{0.0}{180.0}
\psdots[linecolor=black, dotsize=0.16](1.2,-2.285)
\psdots[linecolor=black, dotsize=0.16](5.2,-2.285)
\psarc[linecolor=black, linewidth=0.04, dimen=outer](0.0,-2.285){1.2}{0.0}{90.0}
\psarc[linecolor=black, linewidth=0.04, dimen=outer](20.4,-2.285){1.2}{90.0}{180.0}
\psline[linecolor=black, linewidth=0.02](10.8,-2.285)(10.8,2.915)
\rput(13.6,-2.685){$v_{j-1}$}
\psarc[linecolor=black, linewidth=0.04, dimen=outer](9.2,-2.285){0.8}{90.0}{180.0}
\psarc[linecolor=black, linewidth=0.04, dimen=outer](20.0,-2.285){0.8}{90.0}{180.0}
\psarc[linecolor=black, linewidth=0.04, dimen=outer](4.8,-2.285){2.0}{0.0}{180.0}
\psarc[linecolor=black, linewidth=0.04, dimen=outer](7.2,-2.285){0.4}{0.0}{180.0}
\psarc[linecolor=black, linewidth=0.04, dimen=outer](4.8,-2.285){3.6}{0.0}{180.0}
\rput(8.4,-2.685){$\frac{a'}{c'}$}
\rput(19.2,-2.685){$\frac{b'}{d'}$}
\rput(12.8,-2.685){$\frac{b''}{d''}$}
\end{pspicture}
}

The cases split as follows: in the left case $a<b$ whereas in the right case $a>b$. 
In the right case~$\frac{b}{d}$ is the previous convergent in the expansion of $\frac{a}{c}$ as negative continued fraction. In other words, if 
$\frac{a}{c}=\llbracket{}c_1,\ldots,c_k\rrbracket$ then $\frac{b}{d}=\llbracket{}c_1,\ldots,c_{k-1}\rrbracket$; which gives $A=M(c_{1}, \ldots, c_{k})$ according to 
Proposition~\ref{MatConv}.

Theorem~\ref{SLDecThm} is proved.

To prove Proposition~\ref{MinCrProp}, first note that
the operations~\eqref{FirstSur} and \eqref{SecSur} allow one reduce
any presentation $A=M(c_1,\ldots,c_k)$ to the form with $c_i\geq2$, except perhaps for
the ends of the sequence.
Hence the ``only if'' part.
The proof of the ``if'' part is similar to that of Theorem~\ref{SLDecThm}, Part~(i).

\subsection{Minimal presentation and Farey $n$-gon}\label{JustSec}
Given a matrix $A\in\PSL(2,\Z)$, we explain how to recover the coefficients $(c_{1}, \ldots, c_{k})$
in a combinatorial manner. More precisely, the coefficients can be interpreted as the quiddity of some triangulated polygon.

We focus on the case of matrices of the form
$$
A=\begin{pmatrix}
a&-b\\[4pt]
c&-d
\end{pmatrix}
$$
with $a,b,c,d>0$.
Such a matrix $A$ defines two (positive) rationals $\frac{a}{c}<\frac{b}{d}$ that are linked by an edge 
in the Farey tessellation.
Similarly to what was done in Section~\ref{TrsFarSec}, 
one draws two vertical lines from~$\frac{a}{c}$ and~$\frac{b}{d}$ in the Farey tessellation 
and collects all the triangles crossed by these lines.
One thus obtains a triangulated Farey $n$-gon that we denote $\mathbb{T}_{A}$.

Note that $\mathbb{T}_{A}$ is the union $\mathbb{T}_{\frac{a}{c}}\cup\mathbb{T}_{\frac{b}{d}}$ 
of the triangulations defined in Section \ref{CombIntCFSec}.

We label the vertices of $\mathbb{T}_{A}$ in decreasing order so that 
$$
v_{1}= \frac10,\ldots, v_{k}=\frac{b}{d},\, v_{k+1}=\frac{a}{c}, \ldots, v_{n}= \frac01,
$$
and we denote by $(c_{1},\ldots, c_{k})$ the quiddity sequence attached to the first~$k$ vertices. 
One has
$$
A=M(c_{1},\ldots, c_{k}).
$$

\begin{ex}
\label{CFMatEx}
For the matrix
$
A=\begin{pmatrix}
2&-5\\
1&-2
\end{pmatrix}
$, we obtain the triangulation of Figure~2.

\begin{figure}
\label{triangAF}
\begin{center}
\psscalebox{1.0 1.0} 
{\psset{unit=0.7cm}
\begin{pspicture}(0,-3.9025)(13.797115,3.9025)
\psdots[linecolor=black, dotsize=0.08](3.2985578,-3.2975)
\psdots[linecolor=black, dotsize=0.08](2.4985578,-3.2975)
\psdots[linecolor=black, dotsize=0.08](1.6985577,-3.2975)
\psdots[linecolor=black, dotsize=0.2](0.098557696,-3.2975)
\psdots[linecolor=black, dotsize=0.08](4.0985575,-3.2975)
\psdots[linecolor=black, dotsize=0.08](4.8985577,-3.2975)
\psdots[linecolor=black, dotsize=0.08](5.698558,-3.2975)
\psdots[linecolor=black, dotsize=0.08](6.4985576,-3.2975)
\psdots[linecolor=black, dotsize=0.08](8.098557,-3.2975)
\psdots[linecolor=black, dotsize=0.08](9.698558,-3.2975)
\psdots[linecolor=black, dotsize=0.08](11.298557,-3.2975)
\psdots[linecolor=black, dotsize=0.08](12.098557,-3.2975)
\psdots[linecolor=black, dotsize=0.08](12.898558,-3.2975)
\psarc[linecolor=black, linewidth=0.08, dimen=outer](6.8985577,-3.2975){6.8}{0.0}{180.0}
\psarc[linecolor=black, linewidth=0.02, dimen=outer](6.8985577,-3.2975){0.4}{0.0}{180.0}
\psarc[linecolor=black, linewidth=0.02, dimen=outer](7.698558,-3.2975){0.4}{0.0}{180.0}
\psarc[linecolor=black, linewidth=0.02, dimen=outer](8.498558,-3.2975){0.4}{0.0}{180.0}
\psarc[linecolor=black, linewidth=0.02, dimen=outer](9.298557,-3.2975){0.4}{0.0}{180.0}
\psarc[linecolor=black, linewidth=0.02, dimen=outer](10.098557,-3.2975){0.4}{0.0}{180.0}
\psarc[linecolor=black, linewidth=0.02, dimen=outer](10.898558,-3.2975){0.4}{0.0}{180.0}
\psarc[linecolor=black, linewidth=0.02, dimen=outer](11.698558,-3.2975){0.4}{0.0}{180.0}
\psarc[linecolor=black, linewidth=0.02, dimen=outer](12.498558,-3.2975){0.4}{0.0}{180.0}
\psarc[linecolor=black, linewidth=0.02, dimen=outer](13.298557,-3.2975){0.4}{0.0}{180.0}
\psarc[linecolor=black, linewidth=0.08, dimen=outer](0.4985577,-3.2975){0.4}{0.0}{180.0}
\psarc[linecolor=black, linewidth=0.02, dimen=outer](1.2985576,-3.2975){0.4}{0.0}{180.0}
\psarc[linecolor=black, linewidth=0.02, dimen=outer](2.0985577,-3.2975){0.4}{0.0}{180.0}
\psarc[linecolor=black, linewidth=0.02, dimen=outer](2.8985577,-3.2975){0.4}{0.0}{180.0}
\psarc[linecolor=black, linewidth=0.02, dimen=outer](3.6985576,-3.2975){0.4}{0.0}{180.0}
\psarc[linecolor=black, linewidth=0.02, dimen=outer](4.4985576,-3.2975){0.4}{0.0}{180.0}
\psarc[linecolor=black, linewidth=0.02, dimen=outer](5.2985578,-3.2975){0.4}{0.0}{180.0}
\psarc[linecolor=black, linewidth=0.02, dimen=outer](6.0985575,-3.2975){0.4}{0.0}{180.0}
\psarc[linecolor=black, linewidth=0.02, dimen=outer](1.6985577,-3.2975){0.8}{0.0}{180.0}
\psarc[linecolor=black, linewidth=0.02, dimen=outer](3.2985578,-3.2975){0.8}{0.0}{180.0}
\psarc[linecolor=black, linewidth=0.02, dimen=outer](4.8985577,-3.2975){0.8}{0.0}{180.0}
\psarc[linecolor=black, linewidth=0.02, dimen=outer](6.4985576,-3.2975){0.8}{0.0}{180.0}
\psarc[linecolor=black, linewidth=0.08, dimen=outer](8.098557,-3.2975){0.8}{0.0}{180.0}
\psarc[linecolor=black, linewidth=0.08, dimen=outer](9.698558,-3.2975){0.8}{0.0}{180.0}
\psarc[linecolor=black, linewidth=0.02, dimen=outer](11.298557,-3.2975){0.8}{0.0}{180.0}
\psarc[linecolor=black, linewidth=0.02, dimen=outer](12.898558,-3.2975){0.8}{0.0}{180.0}
\psarc[linecolor=black, linewidth=0.02, dimen=outer](2.4985578,-3.2975){1.6}{0.0}{180.0}
\psarc[linecolor=black, linewidth=0.02, dimen=outer](4.8985577,-3.2975){0.8}{0.0}{180.0}
\psarc[linecolor=black, linewidth=0.02, dimen=outer](5.698558,-3.2975){1.6}{0.0}{180.0}
\psarc[linecolor=black, linewidth=0.06, dimen=outer](8.898558,-3.2975){1.6}{0.0}{180.0}
\psarc[linecolor=black, linewidth=0.08, dimen=outer](12.098557,-3.2975){1.6}{0.0}{180.0}
\psarc[linecolor=black, linewidth=0.08, dimen=outer](4.0985575,-3.2975){3.2}{0.0}{180.0}
\psarc[linecolor=black, linewidth=0.06, dimen=outer](10.498558,-3.2975){3.2}{0.0}{180.0}
\psarc[linecolor=black, linewidth=0.06, dimen=outer](7.2985578,-3.2975){6.4}{0.0}{180.0}
\rput(0.098557696,-3.6975){$\frac01$}
\rput(0.89855766,-3.6975){$\frac11$}
\rput(1.6985577,-3.6975){$\frac54$}
\rput(2.4985578,-3.6975){$\frac43$}
\rput(3.2985578,-3.6975){$\frac75$}
\rput(4.0985575,-3.6975){$\frac32$}
\rput(4.8985577,-3.6975){$\frac85$}
\rput(5.698558,-3.6975){$\frac53$}
\rput(6.4985576,-3.6975){$\frac74$}
\rput(7.2985578,-3.6975){$\frac21$}
\rput(8.098557,-3.6975){$\frac73$}
\rput(8.898558,-3.6975){$\frac52$}
\rput(9.698558,-3.6975){$\frac83$}
\rput(10.498558,-3.6975){$\frac31$}
\rput(11.298557,-3.6975){$\frac72$}
\rput(12.098557,-3.6975){$\frac41$}
\rput(12.898558,-3.6975){$\frac51$}
\rput(13.698558,-3.6975){$\frac10$}
\psline[linecolor=black, linewidth=0.04, linestyle=dashed, dash=0.17638889cm 0.10583334cm](7.2985578,-3.2975)(7.2985578,3.9025)(7.2985578,3.9025)
\psline[linecolor=black, linewidth=0.04, linestyle=dashed, dash=0.17638889cm 0.10583334cm](8.898558,-3.2975)(8.898558,3.9025)
\psdots[linecolor=black, dotsize=0.2](0.89855766,-3.2975)
\psdots[linecolor=black, dotsize=0.2](7.2985578,-3.2975)
\psdots[linecolor=black, dotsize=0.2](8.898558,-3.2975)
\psdots[linecolor=black, dotsize=0.2](10.498558,-3.2975)
\psdots[linecolor=black, dotsize=0.2](13.698558,-3.2975)
\end{pspicture}}
\end{center}
\caption{ The triangulation
$\mathbb{T}_{A}$.}
\end{figure}
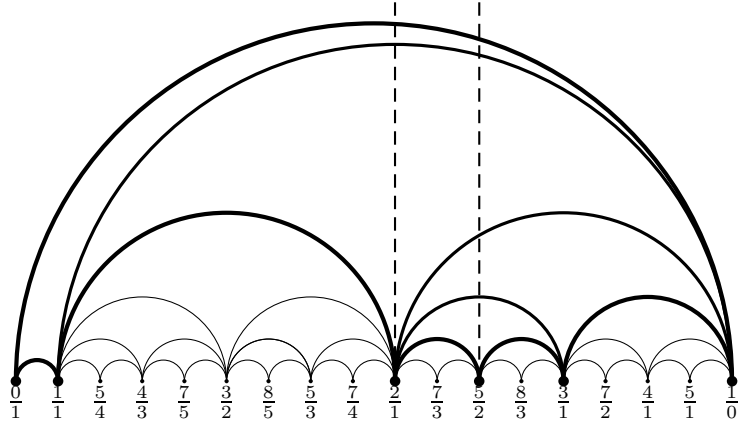

The corresponding Farey hexagon is 
$
\left(
\frac{1}{0},\,
\frac{3}{1},\,
\frac{5}{2},\,
\frac{2}{1},\,
\frac{1}{1},\,
\frac{0}{1}
\right)$
and the triangulation  $\mathbb{T}_{A}$ can be pictured as follows:
$$
\xymatrix @!0 @R=0.45cm @C=0.55cm
{
&\frac{1}{0}\ar@{-}[ldd]\ar@{-}[rr]\ar@{-}[rrdddd]\ar@{-}[dddd]
&& \frac{3}{1}\ar@{-}[rdd]\ar@{-}[dddd]&\\
\\
\frac{0}{1}\ar@{-}[rdd]
&&&&  \frac{5}{2}\ar@{-}[ldd]\\
\\
& \frac{1}{1}\ar@{-}[rr]&& \frac{2}{1}
}
$$
The quiddity sequence at the vertices $\frac{1}{0},
\frac{3}{1},
\frac{5}{2}$ is $(3,2,1)$ so that we deduce
$$
A=\begin{pmatrix}
2&-5\\
1&-2
\end{pmatrix}
=M(3,2,1).$$
\end{ex}

\subsection{Further examples: Cohn matrices}

Let us give more examples of interesting matrices.

\begin{ex}
(a)
Recall that an element~$A\in\PSL(2,\Z)$ is called parabolic if
$\tr\!(A)=2$; a parabolic element is conjugate to~$R^a$ with $a\in\Z$.
The parabolic element $R^a$, for~$a\geq0$ has the following minimal presentation
$
R^a=M(a+1,1,1),
$
while the minimal presentations of $L^a$ and $R^{-a}$ 
are as follows:
$$
L^a=
\begin{pmatrix}
1&0\\[2pt]
a&1
\end{pmatrix}=
M(1,\underbrace{2,\ldots,2}_{a},1,1),
\qquad
R^{-a}=
\begin{pmatrix}
1&-a\\[2pt]
0&1
\end{pmatrix}=M(1,1,\underbrace{2,\ldots,2}_{a},1).
$$
Note that the above equality hold in~$PSL(2,\Z)$, i.e., the matrix equalities
are up to a sign.
The elements $L^a$ and $R^{-a}$ belong to the same conjugacy class
parametrized by~$(2,2,\ldots,2)$.
The element~$R^a$ belongs to a different conjugacy class.

(b)
The minimal presentation of the continued fraction matrix
$M^+(a_1,\ldots,a_{2m})$ is given by~(\ref{NegRegMat}).

(c)
The famous {\it Cohn matrices} are the triples of matrices, 
$(A,AB,B)$ in which the triples of Markov numbers appear
both, as right upper entry, and as $\frac{1}{3}$ of the traces.
It is known (see~\cite{Aig}) that such matrices are enumerated by
$(n,t)$, where $n\in\Z$ and $t$ is a rational $0\leq t\leq1$.
The initial triple of Cohn matrices given by
$$
A(n)=\begin{pmatrix}
n&1\\[4pt]
3n-n^2-1&3-n
\end{pmatrix}
$$ 
and $B(n):=A(n)A(n+1)$
corresponds to the Markov triple 
$(1,5,2)=\left(\frac{1}{3}\tr(A),\frac{1}{3}\tr(AB),\frac{1}{3}\tr(B)\right)$.
Other triples of Cohn matrices are given by the products of the matrices from the initial
triple, encoded by a tree
isomorphic to the Farey (or Stern-Brocot) tree of rationals in $[0,1]$, starting from
the triple $(0,\frac{1}{2},1)$.

The minimal presentation of the initial matrices with $n\geq2$ is
$$
\begin{array}{rcl}
A(n)&=&
M(1,1,n-1,\underbrace{2,\ldots,2}_{n},1,1),\\[4pt]
B(n)&=&
M(1,1,n-1,3,\underbrace{2,\ldots,2}_{n},1,1),\\[4pt]
A(n)B(n)&=&
M(1,1,n-1,2,4,\underbrace{2,\ldots,2}_{n},1,1).
\end{array}
$$
Furthermore,
$$
\begin{array}{rcl}
A(n)^2B(n)&=&
M(1,1,n-1,2,3,4,\underbrace{2,\ldots,2}_{n},1,1),\\[4pt]
A(n)B(n)^2&=&
M(1,1,n-1,2,4,2,4,\underbrace{2,\ldots,2}_{n},1,1),
\end{array}
$$
etc.

We see that all of these matrices with different~$n$ are conjugate to each other,
the conjugacy classes of~$A$ and~$B$ being parametrized by~$(3)$
and~$(4,2)$, respectively.
\end{ex}

\subsection{The $3d$-dissection of a matrix}
We apply Theorem~\ref{SLDecThm} in order to associate
a $3d$-dissection to every element $A\in\SL(2,\Z)$.
Our construction is as follows.

Writing $A$ and $A^{-1}$ in the canonical minimal form
$A=M(c_1,\ldots,c_k)$ and
$A^{-1}=M(c'_1,\ldots,c'_\ell),$
one obtains a~$(k+\ell)$-tuple of positive integers
$(c_1,\ldots,c_k,\,c'_1,\ldots,c'_\ell)$.
Since
$$
M(c_1,\ldots,c_k,\,c'_1,\ldots,c'_\ell)=
M(c_1,\ldots,c_k)\,M(c'_1,\ldots,c'_\ell)=\Id,
$$
Theorem~\ref{SecondMainThm} implies that
this is a quiddity of some
$3d$-dissection.

\begin{ex}
(a)
The matrix~$S=M(1,1,2,1,1)$ corresponds to
the quiddity of the hexagonal dissection of a decagon:
$$
\xymatrix @!0 @R=0.32cm @C=0.45cm
 {
&&&{2}\ar@{-}[dddddddd]\ar@{-}[lld]\ar@{-}[rrd]&
\\
&{1}\ar@{-}[ldd]&&&& {1}\ar@{-}[rdd]\\
\\
{1}\ar@{-}[dd]&&&&&&{1}\ar@{-}[dd]\\
\\
1\ar@{-}[rdd]&&&&&&1\ar@{-}[ldd]\\
\\
&1\ar@{-}[rrd]&&&& 1\ar@{-}[lld]\\
&&&2&
}
$$

(b)
For the matrix $R$
one has $R=M(2,1,1)$ (up to a sign) and $R^{-1}=M(1,1,2,1)$.
This leads to the dissection of a heptagon:
$$
\xymatrix @!0 @R=0.35cm @C=0.35cm
{
&&&1\ar@{-}[rrd]\ar@{-}[lld]
\\
&2\ar@{-}[ldd]\ar@{-}[rrrr]&&&& 2\ar@{-}[rdd]&\\
\\
1\ar@{-}[rdd]&&&&&& 1\ar@{-}[ldd]\\
\\
&1\ar@{-}[rrrr]&&&&1
}
$$

(c) 
Consider the following elements
$$
A=\begin{pmatrix}
2&1\\[2pt]
1&1
\end{pmatrix},
\qquad
B=\begin{pmatrix}
5&2\\[2pt]
2&1
\end{pmatrix}
$$ 
which are the simplest Cohn matrices (with $n=2$).
One has the following presentations:
$$
A=M(2,2,1,1),
\quad
B=M(3,2,2,1,1),
\qquad
A^{-1}=M(1,1,3,1),
\quad
B^{-1}=M(1,1,4,2,1).
$$
The corresponding quiddities are those of the dissected octagon and decagon:
$$
\xymatrix @!0 @R=0.30cm @C=0.5cm
 {
&2\ar@{-}[ldd]\ar@{-}[rrrdd]\ar@{-}[rr]&& 1\ar@{-}[rdd]\\
\\
2\ar@{-}[dd]\ar@{-}[rrrr]&&&&3\ar@{-}[dd]\\
\\
1\ar@{-}[rdd]&&&&1\ar@{-}[ldd]\\
\\
&1\ar@{-}[rr]&& 1
}
\qquad
\qquad
 \xymatrix @!0 @R=0.32cm @C=0.45cm
 {
&&&4\ar@{-}[lld]\ar@{-}[dddddddd]\ar@{-}[rrd]&
\\
&2\ar@{-}[ldd]\ar@{-}[ldddd]&&&& 1\ar@{-}[rdd]\\
\\
1\ar@{-}[dd]&&&&&&1\ar@{-}[dd]\\
\\
3\ar@{-}[rrruuuuu]\ar@{-}[rdd]&&&&&&1\ar@{-}[ldd]\\
\\
&2\ar@{-}[rrd]\ar@{-}[rruuuuuuu]&&&& 1\ar@{-}[lld]\\
&&&2&
}
$$
\end{ex}

\bigbreak \noindent
{\bf Acknowledgements}.
We are grateful to Charles Conley, Vladimir Fock, Sergei Fomin,
Alexey Klimenko, and Sergei Tabachnikov
for multiple stimulating and enlightening discussions.
We are grateful to the referee for a number of helpful remarks and suggestions.
This paper was partially supported by the ANR project SC3A, ANR-15-CE40-0004-01.

\end{document}